\newcommand\norm[1]{\left\lVert#1\right\rVert}
\newcommand\abs[1]{\left\lvert#1\right\rvert}
\DeclareMathOperator\supp{supp}
\DeclareMathOperator{\diam}{diam}
\def\bbR{\mathbb{R}}
\def\bbNz{\mathbb{N}_0}
\def\bbN{\mathbb{N}}
\def\bbP{\mathbb{P}}
\def\bbE{\mathbb{E}}
\def\bbQ{\mathbb{Q}}
\def\cC{\mathcal{C}}
\def\cB{\mathcal{B}}
\def\cF{\mathcal{F}}
\def\cP{\mathcal{P}}
\def\cQ{\mathcal{Q}}
\def\cM{\mathcal{M}}
\def\cL{\mathcal{L}}
\def\tn{\tilde{n}}
\def\bS{{\bm S}}
\def\alpt{\Tilde{\alpha}}
\def\NFKX_2{NF-$\kappa$X_2 }
\def\IKX_2A{I$\kappa$X_2$\alpha$ }
\def\k0{k_*}
\def\vec1{\text{vec}}
\def\NFKX_2{NF-$\kappa$X_2 }
\def\S0{\bS_0}
\def\Poi{\hbox{Poi}}
\def\CLE{\text{\tiny CLE}}
\def\LNA{\text{\tiny LNA}}
\newtheorem{theorem}{Theorem}[section]
\newtheorem{theoremx}{Theorem}
\newtheorem{corollary}[theorem]{Corollary}
\newtheorem{lemma}[theorem]{Lemma}
\newtheorem{proposition}[theorem]{Proposition}
\newtheorem{definition}[theorem]{Definition}
\newtheorem{notation}[theorem]{Notation}
\newtheorem*{example}{Example}
\title{Stability and synchronisation in modelling an oscillatory stochastic reaction network
}
\author{Frederick Truman-Williams}
\affil{Mathematical Institute, University of Oxford, Oxford, UK}
\date{}                  
\begin{document}
\maketitle

\begin{abstract}
    In many applied settings, the chemical Langevin equation and linear noise approximation are used in the simulation and data analysis of stochastic reaction networks. With the goal of exploring the sensitivities of reaction network paths to their initial conditions, we subject these modelling techniques to the analysis of random dynamical systems and stochastic flows of diffeomorphisms respectively. After introducing this perspective to stochastic reaction networks in general, we turn our attention to the Brusselator: a two dimensional stochastic reaction network whose paths, when noise is neglected, exhibits a Hopf bifurcation. Studying both Lyapunov exponents, as well as their finite time counterparts, provides two new insights. Firstly, the Brusselator, when modelled by the chemical Langevin equation, exhibits a global synchronisation property of paths of similar noise realisations; secondly, contrary to statistical accuracy in the distributions of concentrations of reactants, the linear noise approximation can fail to capture the finite time dynamical properties of paths of the chemical Langevin equation. In doing so, we explore the notions of dynamical bifurcation and quasi-ergodicity.
\end{abstract}

\section{Introduction}\label{introdction}

Consider a well-stirred system of $d \geqslant 1$ molecular species $\{S_1,...,S_d\}$ that chemically interact within some fixed volume of constant temperature through $M \geqslant 1$ reaction channels $\{ R_1,\dots,R_r \}$. We specify the dynamical state of this system by its state vector:
\begin{definition}[State vector]
For each molecular species ${S_1,...S_d}$ let $Y_i:\bbR_{\geq0} \to \bbNz, \ t\mapsto Y_i(t)$ denote the number of $S_i$ molecules at time $t$, $i\in\{1,...,d\}$. We refer to the \textit{system state} or \textit{state vector} as $Y:[0,\infty) \to \bbNz^d, \ t\mapsto Y(t) \equiv (Y_1(t),...,Y_d(t))^\intercal .$ 
\end{definition}
Here, the molecular population $Y_i(t)$ at time $t\geq 0$ will be a random variable on some probability space with probabilty measure $\bbP$. We seek to describe the evolution of $Y(t)$ from some given initial state $Y(t_0) = y_0$ subject to reactions $R_1,...,R_r$ occurring over and over at random times. We refer to reactions as instantaneous events which change the number of molecules of each specie. We also assume that two reactions cannot occur at exactly the same time. In order to specify a reaction occurring at time $t$ , say $R_j$ for some $j\in\{1,..,r\}$, we make the following definition.

\begin{definition}[State-change vector]
Consider a well-stirred fixed temperature system of $\{S_1,...,S_d\}$ molecular species and $\{ R_1,\dots,R_r \}$ reaction channels with state vector $Y$. The \textit{stoichiometry matrix} is the matrix $\nu \in \bbNz^{d\times r}$ where the $(i,j)^\textsuperscript{th}$ entry corresponds to the change in the number of $S_i$ molecules produced by one $R_j$ reaction. We refer to each column of the stoichiometry matrix $\nu_j$ as the $j^\textsuperscript{th}$ reaction \textit{state-change vector}.
\end{definition}
Now the updated the system state at time $t\geq 0$ after the $R_j$ reaction channel fires is $Y(t)+\nu_j$. This evolution law implies $\{Y(t) \ | \ t\geq 0\}$ is a jump-type process on the non-negative $d$-dimensional integer lattice.

Under well-stirred conditions, it can be shown that for each reaction channel $R_j$ generates a well defined reaction rate function, independent of the molecules positions or velocities.

\begin{definition}[Propensity function]\label{def:prop}
Consider a well-stirred fixed temperature system of $\{S_1,...,S_d\}$ molecular species and $\{ R_1,\dots,R_r \}$ reaction channels with state vector $Y$. Under these conditions, there exists functions $\alpha_j:\bbNz^d \to [0,\infty)$ for each $j\in\{1,..,r\}$ with property
\[ \bbP(\{Y(t+s)-Y(t) = \nu_j\}) = \alpha_j(Y(t))s + o(s), \]
for all $s,t\geq 0$. The function $\alpha_j$ is known as the propensity function for reaction $R_j$.
\end{definition}

Intuitively, one can think of the propensity function $\alpha_j$ to be the function where the probability, given $Y(t)=y$, that one $R_j$ reaction will occur in the next infinitesimal time interval $[t,t+dt)$ is given by $\alpha_j(y)dt$. The propensity function $\alpha_j$ along with the state-change vector $\nu_j$ completely specifies the reaction channel $R_j$.

Under these conditions, the time-evolution of the stochastic process $\{Y(t) \ | \ t\geq 0\}$ can be described using the random time change representation (RTC) \cite{Anderson2011}
\begin{equation}\label{eq:rtc}
	Y(t) = Y(0) + \sum_{j=1}^r \Poi_j \left( \int_0^t \alpha_j(Y(s)) \
ds  \right)  \nu_j,
\end{equation} 
where, for $j=\{1,2,\dots,r\}$, the $\Poi_j$ are independent, Poisson
processes of rate $\int_0^t \alpha_j(Y(s)) \
ds$ corresponding to reaction $R_j$. Engel in \cite{Mendez2024} analysed some dynamical properties of such paths for the special setting of birth-death reaction networks.

Exact realisations of $\{Y(t) \ | \ t\geq 0\}$ can be obtained using Gillespie's stochastic simulation algorithm (SSA) \cite{Gill77}. However in practical applications such as biochemical processes \cite{ciocchetta2010,Minas2017}, the computational cost of using the SSA to simulate models or provide inference of data is far too large to justify its use. To combat this, a vast amount of literature has been produced on approximate methods to simulate paths of $\{Y(t) \ | \ t\geq 0\}$ and their validity. 

To this end, it is common to introduce a system size parameter $\Omega$. Through setting $X(t)\equiv Y(t)/\Omega$, one can study the dependence of stochastic fluctuations upon system size. It is sufficient to assume that the rates $\alpha_j(Y(t))$ depend upon $\Omega$ as $\alpha_j(Y(t)) \equiv \Omega \alpt_j(X(t))$ where $\alpt_j$ are called the macroscopic rates of the system. This is a fairly-relaxed condition, that can be further relaxed to more general relations \cite{Kampen1992,kurtz1981approximation}. Upon substitution into \eqref{eq:rtc}, $\{X(t) \ | \ t\geq 0\}$ evolves according to,
\begin{equation}\label{eq:rtc2}
	X(t) = X(0) + \frac{1}{\Omega}\sum_{j=1}^r \Poi_j \left( \Omega \int_0^t \alpt_j(X(s)) \
ds  \right) \nu_j.
\end{equation} 
For the remainder of the paper we will investigate approximate models of $\{X(t) \ | \ t\geq 0\}$ and so when we refer to the notion of accuracy in such models we mean in relation to the distribution of $\{X(t) \ | \ t\geq 0\}$ as it evolves under \eqref{eq:rtc2}. In the literature, a standard first approximation of $\{X(t) \ | \ t\geq 0\}$ for sufficiently large $\Omega$ is the process $\{X^\CLE(t) \ | \ t\geq 0\}$ which evolves according to the chemical Langevin equation (CLE) \cite{Anderson2011}:
\begin{equation}\label{eq:CLE} 
    X^\CLE(t) = X^\CLE(0) + \sum_{j=1}^r \nu_j \int_0^t\alpt_j(X^\CLE(s)) \ ds + \frac{1}{\sqrt{\Omega}} \sum_{j=1}^r \nu_j \int_0^t \sqrt{\alpt_j(X^\CLE(s))} \ dW^j_s ,
\end{equation}
where, for $j\in\{1,2,\dots,r\}$, the $W^j_t$ are independent Wiener processes. In It\^{o} SDE form, the CLE \eqref{eq:CLE} is open to more accessible analytical techniques and much faster simulation. Moreover, in \cite{Gill2000,Anderson2011} it was shown that for reaction networks where molecular populations usually don't drop to near zero, distributions of $\{X^\CLE(t) \ | \ t\geq 0\}$ are nearly indistinguishable from that of $\{X(t) \ | \ t\geq 0\}$ for not very large $\Omega$. This situatation is typical in many biochemical oscillatory processes \cite{LEFEVER1971267,ciocchetta2010} one of which will the case we study in this paper.

In pursuit of further analytical tractability and even faster simulation, van Kampen \cite{Kampen1992, kurtz1981approximation} forced a solution \eqref{eq:CLE} of the form
\begin{equation}\label{LNAansatz}
    X^\LNA(t) = z(t) + \Omega^{-1/2}\xi(t),
\end{equation}

where $z(t)$ is the limit in probability of $X(t)$, i.e.\ $ X(t)\stackrel{P}{\to} z(t) $, as $\Omega \to \infty$ and $\xi(t)$ are (random) pertubations from $z(t)$. In his method, called the linear noise approximation (LNA) the Kolmogorov forward equation governing the distribution of $\{X(t) \ | \ t\geq 0\}$ is approximated by a Fokker-Planck equation with linear coefficients resulting in an explicit distribution for $\{\xi(t) \ | \ t\geq0\}$ (and hence $\{X^\LNA(t) \ | \ t\geq 0\}$). That is $\{\xi(t) \ | \ t\geq0\}$ solves the linear stochastic equation,
\begin{equation}\label{eq:xiSDE}
    \xi(t) =  \xi(0) + \sum_{j=1}^r \nu_j \int_0^t\mathrm{D}_z \alpt_j(z(s))\xi(s) \ ds + \sum_{j=1}^r \nu_j \int_0^t\sqrt{\alpt_j(z(s))} \ dW^j_s,
\end{equation}
where $\mathrm{D}_z$ denotes the derivative operator with respect to $z$, for $j\in\{1,2,\dots,r\}$, the $W^j_t$ are independent Wiener processes and $\{ z(t) \ | \ t\geq0\}$ solves the reaction rate ODE (RRE) for some initial value $z(0) = z_0\in\bbR^d$,
\begin{equation}\label{eq:RRE}
    \frac{dz}{dt} = \sum_{j=1}^r \alpt_j(z)\nu_j, \quad z(0) = z_0.
\end{equation}
By solving \eqref{eq:xiSDE}, we obtain,
\begin{equation}\label{eqn:xistate}
    \xi(t) \sim \text{MVN}( C(0,t) \xi(0), V(0,t)) \quad \text{for each } t\geq 0,
\end{equation}
where MVN$( C(0,t) \xi(0), V(0,t))$ denotes the multivariate normal distribution with mean $C(0,t) \xi(0)$ and covariance matrix $V(0,t)$. Here $C(0,t)\in\bbR^{d\times d}$ is the fundamental matrix of \eqref{eq:RRE}, i.e.\
the solution of the initial value problem
\begin{equation}\label{eq:Code}
   \frac{dC}{dt} = \sum_{j=1}^r \nu_j\mathrm{D}_z\alpt_j(z) C, \quad C(0,0) = I_d.
\end{equation}
and $V(0,t)$ the symmetric, positive-definite matrix which is a solution of the initial value problem
\begin{equation}\label{eq:Vode}
    \frac{dV}{dt} = \sum_{j=1}^r \nu_j \mathrm{D}_z\alpt_j(z) V + \sum_{j=1}^r V \left(\mathrm{D}_z\alpt_j(z)\right)^\intercal \nu_j^\intercal + \sum_{j=1}^r \nu_{j}\nu_j^\intercal\alpt_j(z), \quad V(0,0) = 0_d.
\end{equation} 

The LNA is far superior in speed of simulation and allows rapid parameter estimation and statistical inference; something the CLE cannot provide to any degree. Though the LNA has shown long-time accuracy for networks involving propensity functions that are up to first order polynomials of the reactants concentrations \cite{Thomas2013}, the nonlinear dynamics of $\{X(t) \ | \ t\geq 0\}$ are lost \cite{Wallace2012}, proving detrimental to approximate networks exhibiting oscillatory dynamics unless modelled over short finite times. In order to reap the benefits of the LNA for networks with sustained oscillations, Minas and Rand \cite{Minas2017,Minas2024} developed a way to use the LNA many times during the simulation of a single path, ensuring the LNA was only making predictions over short finite times and hence maintaining accuracy for long times. 

Soon, we will explore some of the dynamical properties of the paths of approximations $\{X^\CLE(t) \ | \ t\geq 0\}$ and $\{X^\LNA(t) \ | \ t\geq 0\}$ for a particular stochastic reaction network. The network we choose will be the following \cite{LEFEVER1971267}.

\begin{example}
Consider two chemical species, $A$ and $B$, which undergo the following four reactions: 
\[ \emptyset \xrightarrow{a} A, \qquad A \xrightarrow{1} \emptyset, \qquad A \xrightarrow{b} B, \qquad B \xrightarrow{1} A\] 
according to the following propensity functions, 
\begin{equation}\label{brussprop}
     \alpha_1=a\Omega, \qquad \alpha_2=Y_1, \qquad \alpha_3=bY_1, \qquad \alpha_4 = Y_1^2Y_2/\Omega^2
\end{equation}
where $Y_1$ and $Y_2$ denote the populations of species $A$ and $B$ respectively and $a,b>0$ are the reaction rate constants. This canonical reaction network example is known as the Brusselator.
\end{example}

The notation used here is standard in the literature of reaction networks where $A+B \xrightarrow{k} C$ means the population of $A$ and the population of $B$ both decrease by one while the population of $C$ increases by one and the associated reaction rate constant for this reaction is $k$. Setting $Y(t) = (Y_1(t),Y_2(t))^\intercal$ and letting $X(t)\equiv Y(t)/\Omega$ where $\Omega$ denotes the system size, we see the Brusselator fits into our framework above with $d=2$ and $r=4$. We choose this system for its oscillatory properties; in the $\Omega\to\infty$ limit the deterministic process $\{ z(t) \ | \ t\geq0\}$ undergoes a Hopf bifurcation. That is, through small deviations of $b$ we deduce the Hopf bifurcation is supercritical with losses of stability occurring as the eigenvalues of the Jacobian matrix of \eqref{eq:RRE} cross the imaginary axis \cite{BROWN19951713}. The case $b<2$ gives rise to a stable fixed point, whereas $b>2$ gives rise to an unstable fixed point surrounded by a unique and stable limit cycle. This behaviour is displayed in Figure \ref{fig:3} - paths of $\{z(t) \ | \ t\geq0\}$ as well as sample paths of $\{X(t) \ | \ t\geq0\}$ for different values of $b$.

\begin{figure}
    \centering
    \includegraphics[width=0.8\textwidth]{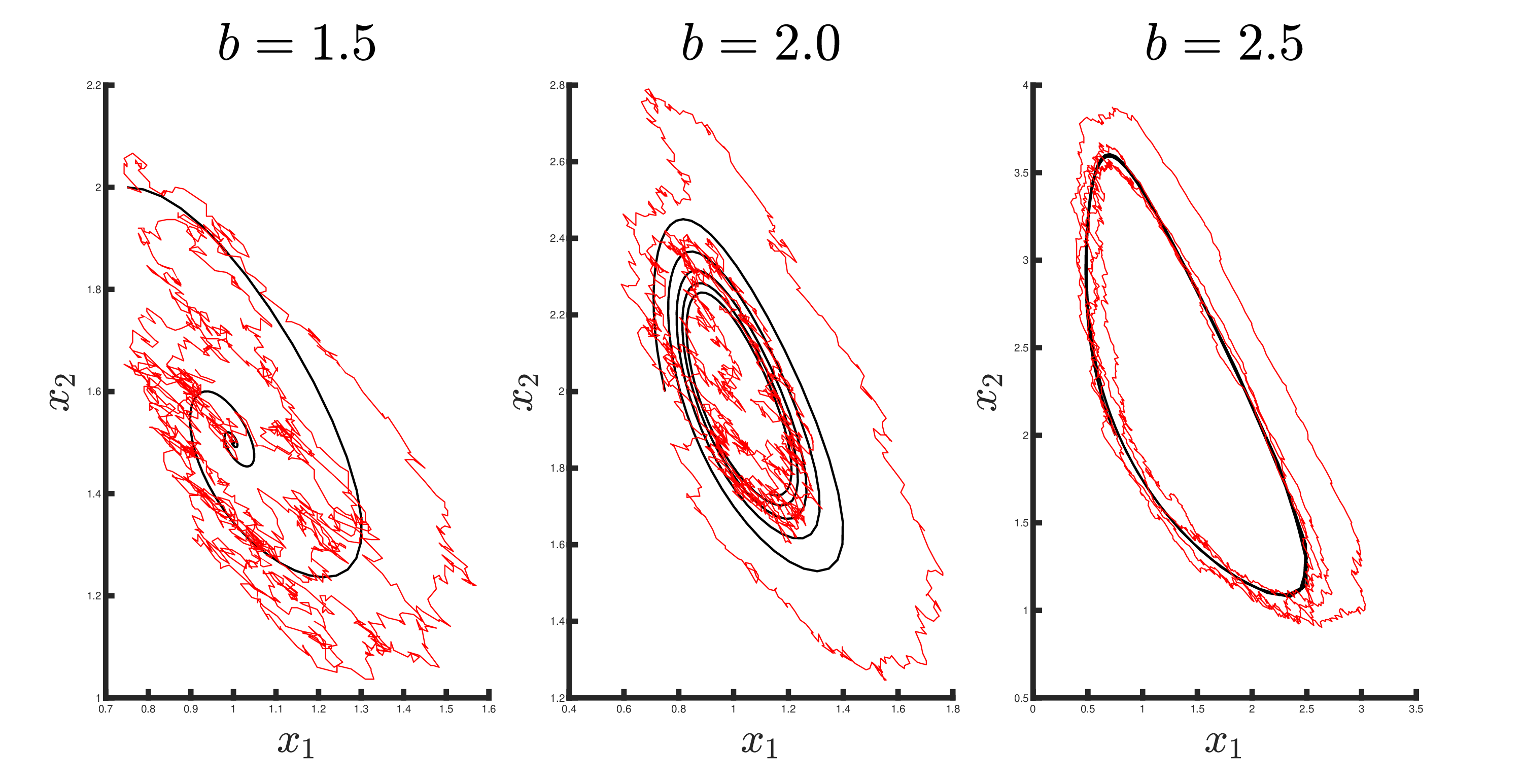}
    \caption{Phase portrait of sample paths of $\{z(t) \ | \ t\geq 0\}$ (black) and $\{X(t) \ | \ t\geq 0\}$ (red) for the Brusselator reaction network. Paths are shown for reaction rates $b=1.5, \ 2.0$ and  $2.5$ respectively with $z(0)=X(0)=(0.75,2)^\intercal$.}
    \label{fig:3}
\end{figure}

In order to match how a typical path of $\{X^\CLE(t) \ | \ t\geq 0\}$ is obtained and used in practical applications, we study its behaviour under the Euler-Maruyama scheme for \eqref{eq:CLE}. In other words we fix a time step $\tau>0$ and study $\{X^\CLE(t) \ | \ t\geq 0\}$ at discrete times $t=n\tau$ for $n\in\bbNz$, assuming it evolves according to the Markov chain $\{X^\CLE_n \ | \ n\in\bbNz\}$ where $X^\CLE_n\equiv X^\CLE(n\tau)$ and,
\begin{equation}\label{MarkovChain}
    X^\CLE_{n+1} = X^\CLE_n + \sum_{j=1}^r \nu_{j}\alpt_j(X^\CLE_n)\tau + \frac{1}{\sqrt{\Omega}} \sum_{j=1}^r \nu_{j} \sqrt{\alpt_j(X^\CLE_n)\tau} \ \mathcal{N}_j(0,1),
\end{equation}
where for each $j\in\{1,..,r\}$, the $\mathcal{N}_j(0,1)$ is an independent standard normal random variable. It turns out a suitable $\tau$ can be found in most stochastic reaction networks \cite{Gill2001,Gill2003,Gill77} . Under the definitions in the next section the evolution of $\{X^\CLE_n \ | \ n\in\bbNz\}$ and $\{X^\LNA(t) \ | \ t\geq 0\}$ can be described by a random dynamical system (RDS) and a two-parameter stochastic flow of diffeomorphisms respectively.

The paper will proceed as follows. For each approximation $\{X^\CLE_n \ | \ n\in\bbNz\}$ and $\{X^\LNA(t) \ | \ t\geq 0\}$ we will develop the framework we need to talk about some of the dynamical properties of paths for general stochastic reaction networks. We will then focus specifically on paths of the Brusselator reaction network and investigate the exponential rate of separation or contraction of paths infinitesimally close paths over both finite times (through finite time Lyapunov exponents) and in the long time limit (through Lyapunov exponents). This way we are able to discuss how sensitive the paths of $\{X^\CLE_n \ | \ n\in\bbNz\}$ and $\{X^\LNA(t) \ | \ t\geq 0\}$ are to their initial conditions. Paths exhibiting separation and contraction are deemed unstable and stable paths respectively. For $\{X^\CLE_n \ | \ n\in\bbNz\}$ we show discuss how the Lyapunov exponents vary over values of $b$ to try and identify what is known as a dynamical bifurcation in stochastic systems. Similar analysis can be seen for the deterministic Brusselator with a random bifurcation parameter $b$, both in the finite time frame \cite{engel2023noiseinduced} as well as in the long-time limit \cite{Arnold1999TheSB}. We believe we are the first to study these properties for the stochastic Brusselator whose noise is derived from the chemical physics of stochastic reaction networks. Similar to what was conjectured in \cite{Arnold1999TheSB}, we are able to prove a global synchronisation result as a consequence of these exponents. We then turn to its approximate process $\{X^\LNA(t) \ | \ t\geq 0\}$. We compare the separation and contraction of paths of both models over finite times allowing us to critique how well the LNA approximates the CLE from a new perspective of dynamics rather than statistics. Though both approximations demand similar conditions of validity and for each $t\geq 0$, both $X^\CLE(t)$ and $X^\LNA(t)$ boast convergence in distribution to $X(t)$ when $\Omega\to\infty$, we show that, for the Brusselator, typical paths of $\{X^\CLE(t) \ | \ t\geq 0\}$ and $\{X^\LNA(t) \ | \ t\geq 0\}$ exhibit very different finite time stability. For clarity, we first state the main results in Section 3 and accompany the results with proofs detailed in the sections thereafter in the order in which they are stated in Section 3.

\section{Statement of main results}

In part \ref{CLEsect} we will state the results concerning $\{X^\CLE_n \ | \ n\in\bbNz\}$ only. We follow this by part \ref{LNAsect} where we state the results concerning $\{X^\LNA(t) \ | \ t\geq 0\}$ and how its dynamical properties compare to that of $\{X^\CLE_n \ | \ n\in\bbNz\}$.

\subsection{The CLE}\label{CLEsect}

In order to single out individual realisations of the noise and consider their effect on the dynamics, we must formalise our model \eqref{MarkovChain}. Let $\ell^r$ denote the Lebesgue measure on $\bbR^r$ and consider the probability space $(\bbR^r,\cB(\bbR^r),\eta)$ where $\eta$ is the probability measure for the $MVN(0,I_r)$ distribution:
\[\eta(A) = \left(\frac{1}{2\pi}\right)^\frac{r}{2} \int_A e^{-\frac{1}{2}y^\intercal y} \  d\ell^r(y), \quad \text{for all } A\in\cB(\bbR^r). \]
We define the set,
\[ \Sigma := \{ \omega = (\omega_n)_{n\in\bbNz} \ | \ \omega_n\in\bbR^r \}, \]
along with the cylinder sets in $\Sigma$ of the form,
\[ \left[ A_0,A_1,...,A_k \right] := \{ (\omega_n)_{n\in\bbNz} \in \Sigma \ | \ \omega_i \in A_i, \ i\in\{0,...,k\} \}  \]
where $A_0,A_1,...,A_k\in\cB(\bbR^r)$. Let $\cF$ be the smallest Borel $\sigma$-algebra generated by the cylinder sets in $\Sigma$ and $\bbP$ be the unique measure on $\cF$, such that
\[ \bbP\left(  \left[ A_0,A_1,...,A_k \right] \right) = \prod_{i=0}^k \eta(A_i). \] 
One can think of each element $\omega\in\Sigma$ as a single noise realisation of the process \eqref{MarkovChain} as it evolves indefinitely. The $n^\textsuperscript{th}$ term of the sequence $\omega$ contains the $r$ realisations of the $r$ standard normal random variables generated at each step of the evolution law. Hence $\Sigma$ is the set of all possible realisations of the noise. On our new probability space $(\Sigma,\cF,\bbP)$, we define the shift map $\theta:\Sigma\to\Sigma$ and its iterates by,
\begin{equation}\label{shift}
    \theta\omega = \theta(\omega_0,\omega_1,...) = (\omega_1,\omega_2,...), \quad \theta^n := \underbrace{\theta\circ\theta\circ ... \circ\theta}_{n \text{ times}}.
\end{equation}
We impose $\theta^0 \equiv \mathrm{id}_{\Sigma}$. Now let $f:\Sigma\times\bbR^d \to \bbR^d, \ (\omega,x)\mapsto f_\omega(x)$ be such that,
\begin{equation}\label{f_omega}
    f_\omega(x) = x + \sum_{j=1}^r \nu_{j}\alpt_j(x)\tau + \frac{1}{\sqrt{\Omega}} \sum_{j=1}^r \nu_{j} \sqrt{\alpt_j(x)\tau} \ \omega_0^{(j)}  , \quad i \in \{1,...,d\}
\end{equation}
where we have used the same notation as in \eqref{MarkovChain} and $\omega_0 = (\omega_0^{(1)},...,\omega_0^{(r)})^\intercal$. It is worth noting that $f$ takes the whole sequence $\omega\in\Sigma$ as an input but only evaluates the first entry of $\omega$, namely $\omega_0$.

Finally, let $\varphi:\bbNz\times\Sigma\times\bbR^d\to\bbR^d$ where,
\begin{equation}\label{varphi}
    \varphi_{n+1}(\omega,x) = 
    \begin{cases}
        f_{\theta^n\omega}\circ \varphi_{n}(\omega,x) & \text{if } \varphi_{n}(\omega,x)\in M, \\
        \mathrm{id}_{\bbR^d}  & \text{if } \varphi_{n}(\omega,x)\in \bbR^d\setminus M. \\
    \end{cases}
\end{equation}
and $\varphi_0(\omega,\cdot) \equiv \mathrm{id}_{\bbR^d}$ for any $\omega\in\Sigma$. We define the map $\varphi$ in this way as we only concerned with dynamics within a compact subset of the positive quadrant, $M\subset\bbR^d_{>0}$. Indeed species concentrations cannot be negative and so we do not care for such paths which move outside of the positive quadrant. The compactness condition will become necessary in our analyisis later. We have decomposed our state space into $\bbR^d= M \sqcup \left( \bbR^d\setminus M\right)$ where $\sqcup$ denotes disjoint union. Now $\bbR^d\setminus M$ acts as an absorbing (or cemetery) state, killing off the process if one species population becomes zero or leaves $M$.
In the natural way, we define the stopping time for each $x\in\bbR^d$ and $\omega\in\Sigma$,
\begin{equation}\label{stoptime}
    \tn(\omega,x) = \inf\{n\in\bbNz: \varphi_n(\omega,x)\in\bbR^d\setminus M\}.
\end{equation}

Using this formulation, we can choose some starting point $x\in M$ and specific noise realisation $\omega = (\omega_0,\omega_1,...) \in \Sigma$ and see where the process is at time $n$ by applying the map $(\omega,x)\mapsto\varphi_n(\omega,x)$. By specifying a whole noise realisation $\omega\in\Sigma$ as the argument of these maps, we are able to analyse those realisations of the process \eqref{MarkovChain} which stay inside the desired region $M$. Providing $\tn(\omega,x)>n$, then $\varphi_{n}(\omega,x)$ coincides with $X^\CLE_n$ with $X^\CLE_0=x$  as required:

\begin{equation}\label{RDSeq}
    \varphi_{n+1}(\omega,x) = \varphi_{n}(\omega,x) + \sum_{j=1}^r \nu_{j}\alpt_j(\varphi_{n}(\omega,x))\tau 
        + \frac{1}{\sqrt{\Omega}} \sum_{j=1}^r \nu_{j} \sqrt{\alpt_j(\varphi_{n}(\omega,x))\tau} \ \omega^{(j)}_n  , \quad i \in \{1,...,d\}.
\end{equation}

\begin{theoremx}\label{rdsthm}
    Let $(\Sigma,(\cF_n)_{n\in \bbNz},\cF,\bbP,(\theta^n)_{n\in \bbNz})$ be the noise space and $\varphi$ be the map defined as above. Then, $(\theta,\varphi)$ is a \textit{memoryless RDS with absorption} on $\bbR^d$ over $(\theta^n)_{n\in \bbNz}$. That is,
    \begin{enumerate}[noitemsep,label=(\roman*)]
        \item $\varphi$ is $P(\bbNz)\otimes\cF\otimes\cB(\bbR^d)-\cB(\bbR^d))$-measurable,
        \item $\varphi_0(\omega,\cdot) = \mathrm{id}_{\bbR^d}$ for any $\omega\in\Sigma$,
        \item $\varphi_{n+m}(\omega,x) = \varphi_n(\theta^m\omega, \varphi_m(\omega,x))$ for all $n,m\in \bbNz$ and $x\in \bbR^d$ and any $\omega\in\Sigma$,       
    \end{enumerate}
    i.e. $(\theta,\varphi)$ is a RDS,
    \begin{enumerate}[noitemsep,label=(\alph*)]
        \item $(\Sigma,(\cF_n)_{n\in \bbNz},\cF,\bbP,(\theta^n)_{n\in \bbNz})$ is a memoryless noise space,
        \item $\varphi_n$ is $(\cF_n\otimes\cB(\bbR^d)-\cB(\bbR^d))$ measurable for every $n\in\bbNz$,
    \end{enumerate}
    i.e. $(\theta,\varphi)$ is memoryless,
    \begin{enumerate}[noitemsep,label=(\Alph*)]
        \item $\bbR^d=M\sqcup \{\partial\}$ where $M\subset \bbR^d$ and $\{\partial\}:=\bbR^d\setminus M$ denotes a cemetery state,
        \item For all $\omega\in\Sigma$ and $x\in \bbR^d, \ \varphi_m(\omega,x) = \partial \implies \varphi_n(\omega,x) = \partial$ for all $n\geq m$,
    \end{enumerate}
    i.e. $(\theta,\varphi)$ exhibits absorption.

    Moreover, for each $x\in M, \ \omega\in\Sigma$ and $n\in\bbNz$, $\varphi_n(\omega,x)$ denotes $X^\CLE_n$ if $X^\LNA_0=x, \ X^\CLE_n\in M$ and $\{X^\CLE_n \ | \ n\in\bbNz\}$ evolved according to the CLE process \eqref{MarkovChain} with noise realisation $\omega$.
\end{theoremx}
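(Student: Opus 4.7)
The plan is to verify the enumerated properties essentially by construction, with the substance concentrated in the cocycle identity (iii) and the joint measurability (i); the remainder amounts to fixing the correct filtration and unwinding the piecewise definition of $\varphi$ at the cemetery. Property (ii) is immediate from $\varphi_0(\omega,\cdot)\equiv\mathrm{id}_{\bbR^d}$. For (iii), I would fix $m$ and induct on $n$: the base case $n=0$ is the identity $\varphi_m = \mathrm{id}\circ\varphi_m$, and the inductive step uses the semigroup relation $\theta^{n+m}=\theta^n\circ\theta^m$, so that $f_{\theta^{n+m}\omega}=f_{\theta^n(\theta^m\omega)}$. Substituting this into the first (non-absorbed) branch of \eqref{varphi} for $\varphi_{n+m+1}(\omega,x)$ reproduces the corresponding recursion for $\varphi_{n+1}(\theta^m\omega,\varphi_m(\omega,x))$ one step up via the inductive hypothesis. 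The second (absorbed) branch must be checked separately: one verifies that the step at which $\varphi_k(\omega,x)$ first leaves $M$, when it exceeds $m$, coincides with the step at which $\varphi_{k-m}(\theta^m\omega,\varphi_m(\omega,x))$ first leaves $M$, so that both sides of the cocycle identity are simultaneously absorbed.

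For measurability (i), I would first check that $f:\Sigma\times\bbR^d\to\bbR^d$ in \eqref{f_omega} is jointly $(\cF\otimes\cB(\bbR^d),\cB(\bbR^d))$-measurable: each $\alpt_j$ is polynomial in $x$ and hence Borel, each projection $\omega\mapsto\omega_0^{(j)}$ is $(\cF,\cB(\bbR))$-measurable by the construction of $\cF$ from cylinder sets, and sums and compositions of measurable functions remain measurable. Since each shift $\theta^n$ is $(\cF,\cF)$-measurable and $M$ is Borel, induction on $n$ through \eqref{varphi} then delivers measurability of every $\varphi_n$. For the memoryless conditions, I would introduce the natural filtration $\cF_n:=\sigma(\omega_0,\ldots,\omega_{n-1})$ (with $\cF_0:=\{\emptyset,\Sigma\}$): the product structure $\bbP([A_0,\ldots,A_k])=\prod_{i=0}^k\eta(A_i)$ asserts precisely the independence of these coordinates under $\bbP$, which is condition (a), and the $\cF_n$-adaptedness of $\varphi_n$ in (b) follows by the same induction, since $f_{\theta^{k-1}\omega}$ reads only the coordinate $\omega_{k-1}$. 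The absorption clauses are built in: (A) holds by identifying $\bbR^d\setminus M$ with the single cemetery point $\partial$, and (B) follows from the second branch of \eqref{varphi}, which holds the process at the already-absorbed state indefinitely.

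The final claim that $\varphi_n(\omega,x)=X^\CLE_n$ when $X^\CLE_0=x$ and $\tn(\omega,x)>n$ is a direct comparison of \eqref{RDSeq} with \eqref{MarkovChain}: on this event every iterate lies in $M$, only the first branch of \eqref{varphi} ever fires, and the two recursions coincide under the realisation of the standard normals $\mathcal{N}_j(0,1)$ as the i.i.d.\ samples $\omega_k^{(j)}$. The main obstacle throughout is bookkeeping rather than analysis: one must track the two branches of the piecewise definition consistently through both the cocycle induction and the measurability induction, so that no ambiguity arises once absorption has occurred. No single step requires deep input, but the cocycle identity in particular calls for careful handling of the interaction between the shift $\theta^m$, the stopping time $\tn$, and the cemetery $\partial$.
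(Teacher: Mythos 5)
Your proposal is correct and is exactly the routine verification the paper has in mind --- the paper gives no proof at all, explicitly leaving the conditions ``to be readily verified by the reader,'' and your treatment (the cocycle identity by induction with separate handling of the absorbed branch, joint measurability via induction through \eqref{varphi}, the natural filtration $\cF_n=\sigma(\omega_0,\dots,\omega_{n-1})$, and the direct comparison of \eqref{RDSeq} with \eqref{MarkovChain}) supplies precisely the missing details. The only point worth noting is that you have (correctly) read the second branch of \eqref{varphi} as freezing the process at its already-absorbed value rather than literally returning the initial point $x$; that is the unique reading under which both the cocycle property (iii) and the absorption property (B) hold.
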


We now draw our focus to the Brusselator, where $d=2, \ r=4$ and the propensity functions take the form \eqref{brussprop}. In the notation above,
\begin{equation}\label{eq:fbruss}
     f_\omega(x) = F_0(x) + F(x)\omega_0,
\end{equation}
for each $\omega\in\Sigma$ and $x=(x_1,x_2)^\intercal\in M$ where,
\begin{equation}\label{eq:Fbrus}
    F_0(x) = \begin{pmatrix}
        x_1 + (a-(1+b)x_1 + x_1^2x_2)\tau \\
        x_2 + (bx_1-x_1^2x_2)\tau
    \end{pmatrix}, 
    \quad 
    F(x) = \sqrt{\frac{\tau}{\Omega}} \begin{pmatrix}
        \sqrt{a} & -\sqrt{x_1} & -\sqrt{bx_1} & x_1\sqrt{x_2} \\
        0 & 0 & \sqrt{bx_1} & -x_1\sqrt{x_2}
    \end{pmatrix}.
\end{equation}

From herein we let $(\theta,\varphi)$ be the RDS characterised in Theorem \ref{rdsthm} with its evolution determined by \eqref{varphi}, \eqref{eq:fbruss} and \eqref{eq:Fbrus} unless otherwise specified. We call $(\theta,\varphi)$ the Brusselator RDS.

With this explicit representation we show the Brusselator exhibits quasi-ergodicity in the sense of \cite{castro2024existence}. As an introduction to the subject, one can think of quasi-ergodicity as a sort of local version of ergodicity. We are able to use the fact that orbits which never escape a defined region, visit almost all areas of that region and so we are able to prove results about the average behaviour of such orbits through studying a 'typical' orbit. Like ergodicity, it opens our analysis up to a range of ergodic theorems, only applicable to those paths which stay in the desired region.

\begin{theoremx}\label{quasthm}
    The Brusselator RDS $(\theta,\varphi)$ admits a unique quasi-ergodic measure $\nu$ on $M$ and therefore exhibits quasi-ergodicty. That is, there exists a unique Borel measure $\nu$ on $M$ such that,
     \[ \lim_{n\to\infty} \bbE_x \left[ \frac{1}{n}\sum_{i=0}^{n-1} h\circ\varphi_i \ \middle| \ \tn>n \right] = \int_M h(y) \ d\nu(y)  \]
    holds for every $n\in\bbN$, $h\in L^\infty(M)$ and $\nu$-almost every $x\in M$.
   
    Moreover, given $\eta$, a Borel measure on M giving non zero probability to paths staying in $M$, for every $n\in\bbN$, there exists $K(\eta)>0$ and $\beta>0$, such that
    \[ \norm{\bbP_\eta(\varphi_n\in\cdot \ | \ \tn>n) -\mu}_{TV} \leq K(\eta)e^{-\beta n}, \]
    for all $n\in\bbN$. Here $\norm{\cdot}_{TV}$ denotes the total variation norm and $\bbP_\eta(\cdot) := \int_M (\bbP\otimes\delta_x)(\cdot) \ d\eta(x)$.
   
    Finally, $\nu$ corresponds to the stationary measure for paths which stay in $M$.
\end{theoremx}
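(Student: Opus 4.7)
The plan is to invoke the general existence and exponential ergodicity theorem for quasi-ergodic measures in \cite{castro2024existence}. That theorem reduces the entire statement — existence and uniqueness of $\nu$, the Birkhoff-type ergodic identity, the exponential total variation convergence, and the identification of $\nu$ with the stationary measure for paths conditioned to stay in $M$ forever — to checking two standard inputs on the compact set $M$: a one-step Doeblin-type minorisation of the transition kernel, and a uniform survival comparison.

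First I would establish the minorisation. The one-step law of $\varphi_1(\omega,x) = F_0(x) + F(x)\omega_0$ under $\omega_0 \sim MVN(0,I_4)$ is Gaussian with mean $F_0(x)$ and covariance
\begin{equation*}
F(x) F(x)^\intercal = \frac{\tau}{\Omega}\begin{pmatrix} a + (1+b)x_1 + x_1^2 x_2 & -bx_1 - x_1^2 x_2 \\ -bx_1 - x_1^2 x_2 & bx_1 + x_1^2 x_2 \end{pmatrix}.
\end{equation*}
A direct expansion gives $\det F(x)F(x)^\intercal = (\tau/\Omega)^2 (a + x_1)(bx_1 + x_1^2 x_2)$, which is uniformly bounded below on any compact $M \subset \mathbb{R}^2_{>0}$ that is bounded away from the axis $\{x_1 = 0\}$. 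The transition kernel therefore admits a jointly continuous, strictly positive density $p(x,y)$ on $M \times M$; by compactness there exists $c>0$ with $p(x,y)\geq c$ on $M \times M$, whence
\begin{equation*}
\mathbb{P}(\varphi_1(\omega,x) \in A) \geq c\, \ell^2(A \cap M), \qquad x\in M, \ A \in \mathcal{B}(\mathbb{R}^2).
\end{equation*}

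The uniform survival comparison — that there exists $c'>0$ with $\mathbb{P}_{\ell^2|_M/\ell^2(M)}(\tn>n) \geq c'\sup_{x\in M}\mathbb{P}_x(\tn>n-1)$ for all $n$ — follows from this minorisation by a one-step Markov argument, since after a surviving first step from any $x\in M$ the conditioned law dominates a fixed constant multiple of $\ell^2|_M$. The two inputs together satisfy the hypotheses of \cite{castro2024existence}, whose conclusions deliver exactly the uniqueness of $\nu$, the ergodic identity, and the exponential total variation rate $e^{-\beta n}$; the identification of $\nu$ with the stationary distribution of the Q-process (the CLE Markov chain conditioned never to exit $M$) is built into the construction of the quasi-ergodic measure in that reference.

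The main obstacle I anticipate is the selection and justification of the compact region $M$. Both the minorisation and the survival comparison require $M$ to be bounded away from $\{x_1=0\}$, where $F(x)$ loses rank; simultaneously, $M$ must be large enough relative to the deterministic flow generated by $F_0$ that $\mathbb{P}_x(\tn>n)>0$ for all $x \in M$ and $n\in\bbN$, since otherwise the conditional probabilities in the statement would be ill-defined. For $b>2$ this suggests choosing $M$ as a thickened neighbourhood of the deterministic limit cycle, minus a small disc around the axis $\{x_1=0\}$; verifying that such an $M$ yields strictly positive minorisation and survival constants — and quantifying them in terms of $a$, $b$, $\Omega$, and $\tau$ — is the delicate part of the argument.
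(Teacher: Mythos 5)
Your proposal is correct in substance and follows the same basic route as the paper: write down the explicit Gaussian one-step density of the Euler--Maruyama chain, use compactness of $M\subset\bbR^2_{>0}$ together with the non-degeneracy of $F(x)F(x)^\intercal$ away from $\{x_1=0\}$ (your determinant $(\tau/\Omega)^2(a+x_1)(bx_1+x_1^2x_2)$ agrees with the paper's $(\tau/\Omega)^2x_1(x_1+a)(x_1x_2+b)$), and then hand everything to the abstract machinery of \cite{castro2024existence}. The genuine difference is in which hypotheses you verify. The paper checks Hypothesis (H) of that reference directly, namely (i) continuity of the density $\kappa(\cdot,y)$ in the first argument and (ii) strict positivity of $\cP(x,A)$ for open $A$, and then proceeds through a quasi-stationary measure $\mu$ (via compactness of the Koopman operator on $\cC^0(M)$ and a positive eigenfunction $f_0$), the quasi-ergodic measure $\nu = f_0\,d\mu/\int f_0\,d\mu$, and finally the Q-process to identify $\nu$ as the stationary law of surviving paths. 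You instead verify a Doeblin minorisation plus a survival-comparison inequality; these are Champagnat--Villemonais-type conditions, not literally the hypotheses of \cite{castro2024existence} as used here, so the citation target is mismatched as stated. The argument still closes because your stronger observation---a jointly continuous, strictly positive density on the compact set $M\times M$---immediately implies both of the paper's conditions (i) and (ii); but note that your survival comparison does not follow from the lower bound on the density alone by a one-step argument: you also need the (equally easy) uniform \emph{upper} bound on $\kappa$ over $M\times M$ to dominate $\sup_x\bbP_x(\tn>n)$ by the uniform-initial-law survival probability. Finally, your anticipated ``delicate part''---choosing $M$ large enough relative to the deterministic flow so that $\bbP_x(\tn>n)>0$---is not actually an obstacle: since the Gaussian increments have full support, any compact $M$ with nonempty interior that is bounded away from $\{x_1=0\}$ gives $\bbP_x(\tn>n)\geq(c\,\ell^2(M^\circ))^n>0$, so the conditional expectations are well defined without any reference to the limit cycle; the paper simply takes such an $M$ as given.
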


A graphical representation of the density with respect to $\nu$ for different values of $b$ is shown in Figure \ref{fig:2}. The density function was calculated numerically by Ulam's method for approximating invariant measures \cite{LI1976,Bose2014}.

\begin{figure}
    \centering
    \includegraphics[width=\textwidth]{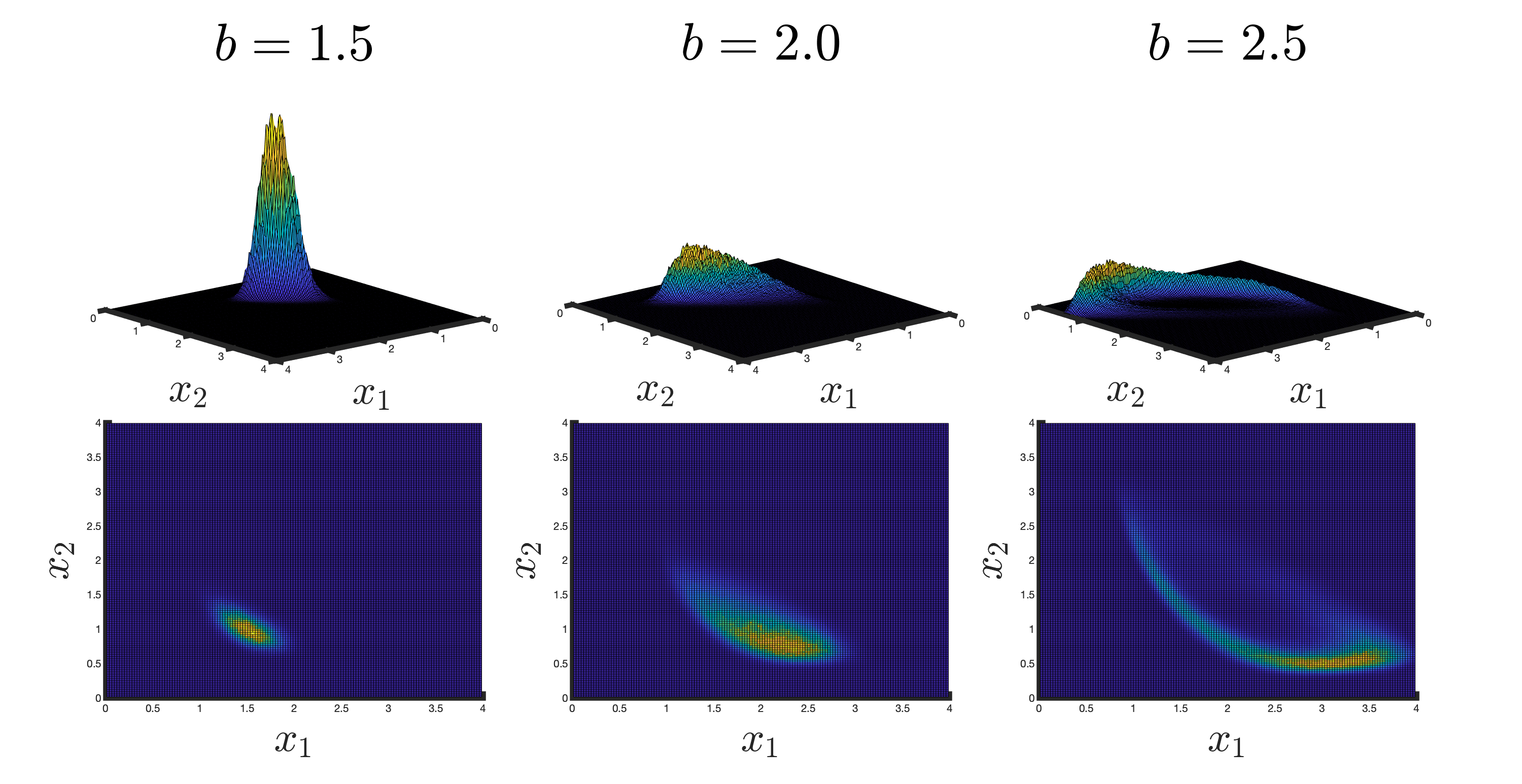}
    \caption{Quasi-ergodic density function of the Brusselator RDS $(\theta,\varphi)$ with $b = 1.5,2.0$ and $2.5$ respectively and system size $\Omega=1000$. The shape of the function is displayed for all $x=(x_1,x_2)^\intercal\in[0,4]\times[0,4]$. The lighter colours indicate higher probability.}
    \label{fig:2}
\end{figure}

We seek to appropriately describe the sensitivity of the long term behaviour of our RDS $(\theta,\varphi)$ to its initial conditions. More specifically, we want to quantify the rate of separation between infinitesimally close orbits conditioned on the fact that they haven't been absorbed. As we will find out, quasi-ergodicity will play a crucial role in quantifying this. The standard procedure to quantify the rate of separation of orbits of continuously differentiable random dynamical systems is through the \textit{Lyapunov exponents} \cite[Chapter 3]{arnold2013random} which we will now define. By continuously differentiable (class $\cC^1$) we mean, for all $n\in\bbNz$ and $\omega\in\Sigma$, the map $x\mapsto\varphi_n(\omega,x)$ is differentiable in $x$ with continuous derivative. We use $\norm{\cdot}$ to denote both the standard Euclidean norm on $\bbR^2$ and the corresponding matrix norm it induces.

\begin{definition}[Lyapunov exponents]\label{lyapexp}
    Let $(\theta,\varphi)$ be a continuously differentiable RDS. Let $\omega\in\Sigma, \ x \in M$ and $T_xM\subset\bbR^2$ denote the tangent space of $M$ at $x$. The Lyapunov exponent of $v\in T_xM$ is the defined as the limit,
    \[ \lambda_v(\omega,x) := \limsup_{n\to\infty}\frac{1}{n}\log \norm{\mathrm{D}_x\varphi_n(\omega,x)v} \]
    where $\mathrm{D}_x$ denotes the derivative operator with respect to $x$ and $v\mapsto\mathrm{D}_x\varphi_n(\omega,x)v$ is a linear map from $T_xM$ to $T_{\varphi_n(\omega,x)}M$.
\end{definition}
The linear map $v\mapsto\mathrm{D}_x\varphi_n(\omega,x)v$ defines the evolution of the tangent vectors of orbits of $(\theta,\varphi)$. Lyapunov exponents in stochastic bifurcation theory are the analogues to eigenvalues in determinstic bifurcation thoery; when trying to identify a bifurcation we look for a change in sign of Lyapunov exponents known as a D-bifurcation (or dynamical bifurcation). When $\lambda_v(\omega,x)<0$ then, along the direction of $v\in T_xM$, we have uniform mutual convergence of paths with intial condition infinitessimally close to $(\omega,x)\in\Sigma\times M$ at an exponential rate. Instead, if $\lambda_v(\omega,x)>0$ then we have uniform mutual divergence and hence a change in sign of $\lambda_v(\omega,x)>0$ results in a stark change in dynamics. In random dynamical systems theory this is known as a Dynamical bifurcation (or D-bifurcation).

However, for the Brusselator RDS $(\theta,\varphi)$ with orbits almost surely being absorbed by the cemetery state at some finite time, this limit defined on initial conditions $(\omega,x)\in\Sigma\times M$ is not very enlightening. This motivates us to define the notion of \textit{conditioned Lyapunov exponents}; the Lyapunov exponents of initial conditions $(\omega,x)$ corresponding to orbits which do not get absorbed \cite{Engel_2019}. For our purpose they provide the same information as ordinary Lyapunov exponents, except they only consider those paths which do not get absorbed. Hence we can extend the D-bifurcation theory to absorbing RDS. We prove a version of Oseledet's Multiplicative ergodic theorem.

\begin{theoremx}\label{osethm}
    Let $(\theta,\varphi)$ be the Brusselator RDS. There exists a measure $\bbQ_\nu$ on $\Sigma\times M$ which gives full measure to orbits of $(\varphi_n)_{n\in\bbN}$ which never get absorbed. In particular, $\bbQ_\nu(\Xi) = 1$ where $\Xi\in\cF\otimes\cB(M)$ has the form
    \[ \Xi := \{ (\omega,x)\in\Sigma\times M \ | \ \tn(\omega,x) = \infty \} = \bigcap_{n\in\bbN} \{ (\omega,x)\in\Sigma\times M \ | \ \tn(\omega,x) > n \}.  \]
     
     Moreover, there exists a set $\Delta\in \cF\otimes\cB(M)$, invariant under $(\theta,\varphi)$ and of $\bbQ_\nu$-full measure such that $\Delta\subset\Xi$ and for all $(\omega,x)\in\Delta$, there exists a vector line $E(\omega,x)\subseteq\bbR^2$ such that,
    \begin{equation*}
    \lambda_v(\omega,x) = \lim_{n\to\infty}\frac{1}{n}\log \norm{\mathrm{D}_x\varphi_n(\omega,x)v} = 
        \begin{cases}
            \lambda_1 & \text{if } 0\neq v\in \bbR^2\setminus E(\omega,x), \\
            \lambda_2 & \text{if } 0\neq v\in E(\omega,x).
        \end{cases}
    \end{equation*}    
    Here $\lambda_1 > \lambda_2>-\infty$ are the conditioned Lyapunov exponents and are constant. The subspace $E(\omega,x)$ is such that $\mathrm{D}_x\varphi_n(\omega,x)E(\omega,x) = E(\theta^n\omega,\varphi_n(\omega,x))$. 
    
    Finally for $\nu$-almost every $x\in M$,
    \[ \lim_{n\to\infty} \bbE_x \left[ \abs{\lambda_1- \frac{1}{n}\log\norm{\mathrm{D}_x\varphi_n(\omega,x)}} \ \middle| \ \tn>n \right] = 0 \quad \text{and} \quad \lim_{n\to\infty} \bbE_x \left[ \abs{\lambda_1+\lambda_2 - \frac{1}{n}\log\abs{\det\mathrm{D}_x\varphi_n(\omega,x)}} \ \middle| \ \tn>n \right] = 0. \]
    Here the matrix norm is induced by the Euclidean norm on $\bbR^2$.
\end{theoremx}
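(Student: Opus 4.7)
The plan is to (i) construct a measure $\bbQ_\nu$ on $\Sigma\times M$ giving full mass to non-absorbed orbits, (ii) apply Oseledets' multiplicative ergodic theorem (MET) to the derivative cocycle of $\varphi$ under $\bbQ_\nu$, and (iii) translate the resulting pointwise limits into conditioned $L^1$-convergence using the quasi-ergodicity of Theorem \ref{quasthm}.

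Building on Theorem \ref{quasthm}, I plan to adopt the construction of \cite{castro2024existence}. The sub-Markov kernel $P(x,A):=\bbP_x(\varphi_1\in A,\ \tn>1)$ on $M$ admits a Perron eigenpair $(\mu,\hat h)$ with eigenvalue $e^{-\beta}$, where $\mu$ is the quasi-stationary measure, $\hat h$ is a positive eigenfunction, and the quasi-ergodic measure satisfies $d\nu = \hat h\,d\mu / \int\hat h\,d\mu$. The twisted kernel $\hat P(x,A):=e^{\beta}\hat h(x)^{-1}\int_A\hat h(y)\,P(x,dy)$ is then genuinely Markov with stationary measure $\nu$; lifting it onto the skew-product $\Theta(\omega,x):=(\theta\omega, f_\omega(x))$ on $(\Sigma\times M,\cF\otimes\cB(M))$ produces the desired $\Theta$-invariant probability measure $\bbQ_\nu$ concentrated on $\Xi$, with ergodicity under $\Theta$ inherited from the uniqueness and exponential mixing bound already contained in Theorem \ref{quasthm}.

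To apply Oseledets' MET to the cocycle $(\omega,x)\mapsto\mathrm{D}_x\varphi_n(\omega,x)$, note that on the compact set $M\subset\reals^2_{>0}$ the Brusselator maps \eqref{eq:fbruss}--\eqref{eq:Fbrus} are smooth in $x$ with derivatives depending linearly on the Gaussian entries $\omega_0^{(j)}$, which immediately yields $\log^+\|\mathrm{D}_x f_\omega\|,\ \log^+\|(\mathrm{D}_x f_\omega)^{-1}\|\in L^1(\bbQ_\nu)$; invertibility of $\mathrm{D}_x f_\omega$ holds $\bbP$-almost surely since the deterministic part differs from $I_2$ by a $\mathcal{O}(\tau)$-perturbation. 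Oseledets on $(\Sigma\times M, \bbQ_\nu, \Theta)$ then delivers constant exponents $\lambda_1\geq\lambda_2>-\infty$, a full-$\bbQ_\nu$-measure invariant set $\Delta\subset\Xi$, and the measurable splitting. The principal obstacle I anticipate is establishing strict simplicity $\lambda_1>\lambda_2$, which is what makes the direction $E(\omega,x)$ unambiguous; the plan is to verify Furstenberg's irreducibility and contraction criterion \cite[Ch.\,6]{arnold2013random} for the induced projective cocycle on $\bbP(\reals^2)$, exploiting that $F(x)$ in \eqref{eq:Fbrus} has full row rank throughout $M$ (since $x_1,x_2>0$ there) to rule out any invariant probability measure on $\bbP(\reals^2)$ supported on a finite set of directions.

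For the two $L^1$-statements, set $\psi_n(\omega,x):=\log\|\mathrm{D}_x\varphi_n(\omega,x)\|$. The sequence $(\psi_n)$ is subadditive along the cocycle, so Kingman's subadditive ergodic theorem, combined with the fact that the top Oseledets exponent is attained in operator norm for Lebesgue-a.e.\ tangent direction, yields $\psi_n/n\to\lambda_1$ $\bbQ_\nu$-a.s. The passage to $\bbE_x[\,|\lambda_1-\psi_n/n|\mid \tn>n\,]\to 0$ will then combine two ingredients: the quasi-ergodic identity of Theorem \ref{quasthm}, which controls conditioned Birkhoff averages of single-step observables such as $(\omega,x)\mapsto\log\|\mathrm{D}_x f_\omega(x)\|$ after a truncation argument, and a uniform integrability estimate for $\psi_n/n$ under the conditioned measure, following from compactness of $M$ together with the polynomial-in-Gaussian growth of $\mathrm{D}_x f_\omega$. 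The second limit is obtained by repeating the argument with $\log|\det\mathrm{D}_x\varphi_n|$, which is genuinely additive via the chain rule $\log|\det\mathrm{D}_x\varphi_n(\omega,x)| = \sum_{i=0}^{n-1}\log|\det\mathrm{D}_x f_{\theta^i\omega}(\varphi_i(\omega,x))|$ and has $\bbQ_\nu$-mean $\lambda_1+\lambda_2$ by the standard determinantal identity.
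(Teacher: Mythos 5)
Your overall architecture (a Q-process measure $\bbQ_\nu$ concentrated on $\Xi$, a multiplicative ergodic theorem over the skew product, then conditioned $L^1$-convergence) matches the paper's, but you assert as immediate exactly the two technical points the paper's proof is actually about, and one of them is a genuine gap. You invoke the invertible-cocycle form of Oseledets and claim $\log^+\norm{(\mathrm{D}_x f_\omega)^{-1}}\in L^1(\bbQ_\nu)$ follows at once because the derivative is affine in the Gaussian entries; it does not. Since $\norm{(\mathrm{D}_x f_\omega)^{-1}}=\norm{\mathrm{D}_x f_\omega}/\abs{\det\mathrm{D}_x f_\omega}$ in dimension two, you need $\bbE\left[\log^{-}\abs{\det\mathrm{D}_x f_\omega}\right]<\infty$ uniformly over $x\in M$, i.e.\ an anti-concentration estimate near the zero set of the polynomial (in $\omega_0$) $\det\mathrm{D}_x f_\omega$; your ``$I_2$ plus $\mathcal{O}(\tau)$-perturbation'' heuristic ignores the unbounded Gaussian part entirely. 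The paper explicitly declines to establish such a bound: it runs Kingman's subadditive ergodic theorem on both $\log\norm{\Phi_n}$ and $\log\abs{\det\Phi_n}$ (Lemma \ref{explem}, Theorem \ref{furstkest}) and feeds the output into the one-sided MET \cite[Theorem 3.4.1]{arnold2013random}, which needs only $\log^{+}\norm{A}\in L^1$ and yields a forward-invariant filtration (the line $E(\omega,x)$) rather than a splitting; the integrability of $\log\norm{A}$ from below is obtained by the ad hoc observation $A_{12}+A_{22}=1$, whence $\norm{A}\geq R/2$. If you insist on the two-sided MET you must supply the determinant anti-concentration estimate; otherwise you should switch to the one-sided version.

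Your route to $\bbE_x\left[\abs{\lambda_1-\frac{1}{n}\log\norm{\Phi_n}}\ \middle|\ \tn>n\right]\to 0$ via ``the quasi-ergodic identity for single-step observables plus truncation'' also does not work as stated: $\log\norm{\Phi_n}$ is subadditive, not a Birkhoff sum, so Theorem \ref{quasthm} does not apply to it, and the single-step observable $\log\norm{\mathrm{D}_x f_\omega}$ is unbounded and hence not in $L^\infty(M)$ in any case. The paper instead proves the second-moment bounds $\bbE_x\left[\abs{\log\norm{\Phi_n}}^2\right]\leq\tilde{C}n^2$ and $\bbE_x\left[\abs{\log\abs{\det\Phi_n}}^2\right]\leq 4\tilde{C}n^2$ and invokes \cite[Theorem 2.10]{castro2022lyapunov} with $p=2$; a first-moment uniform-integrability argument of the kind you sketch would have to be made precise before it could replace this. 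On the positive side, you correctly flag simplicity $\lambda_1>\lambda_2$ as the delicate point and propose Furstenberg's criterion, which is more than the paper's own proof offers on that score; but note that the one-sided MET the paper uses produces the filtration without needing simplicity as an input.
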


Importantly, for almost all initial conditions $(\omega,x)\in\Sigma\times M$ corresponding to non-absorbing orbits of the Brusselator RDS $(\theta,\varphi)$, the limit in Definition \ref{lyapexp} doesn't just exist beyond the $\limsup$ but in fact does not depend on the initial conditions $(\omega,x)$ and corresponds with invariant subspaces along the dynamics. Remarkably, the exponential rates of separation illustrated in Figure \ref{fig:4} for various values of $b$ apply to almost all paths of $\{X^\CLE_n \ | \ n\in\bbNz\}$.

\begin{figure}
    \centering
    \includegraphics[width=0.8\textwidth]{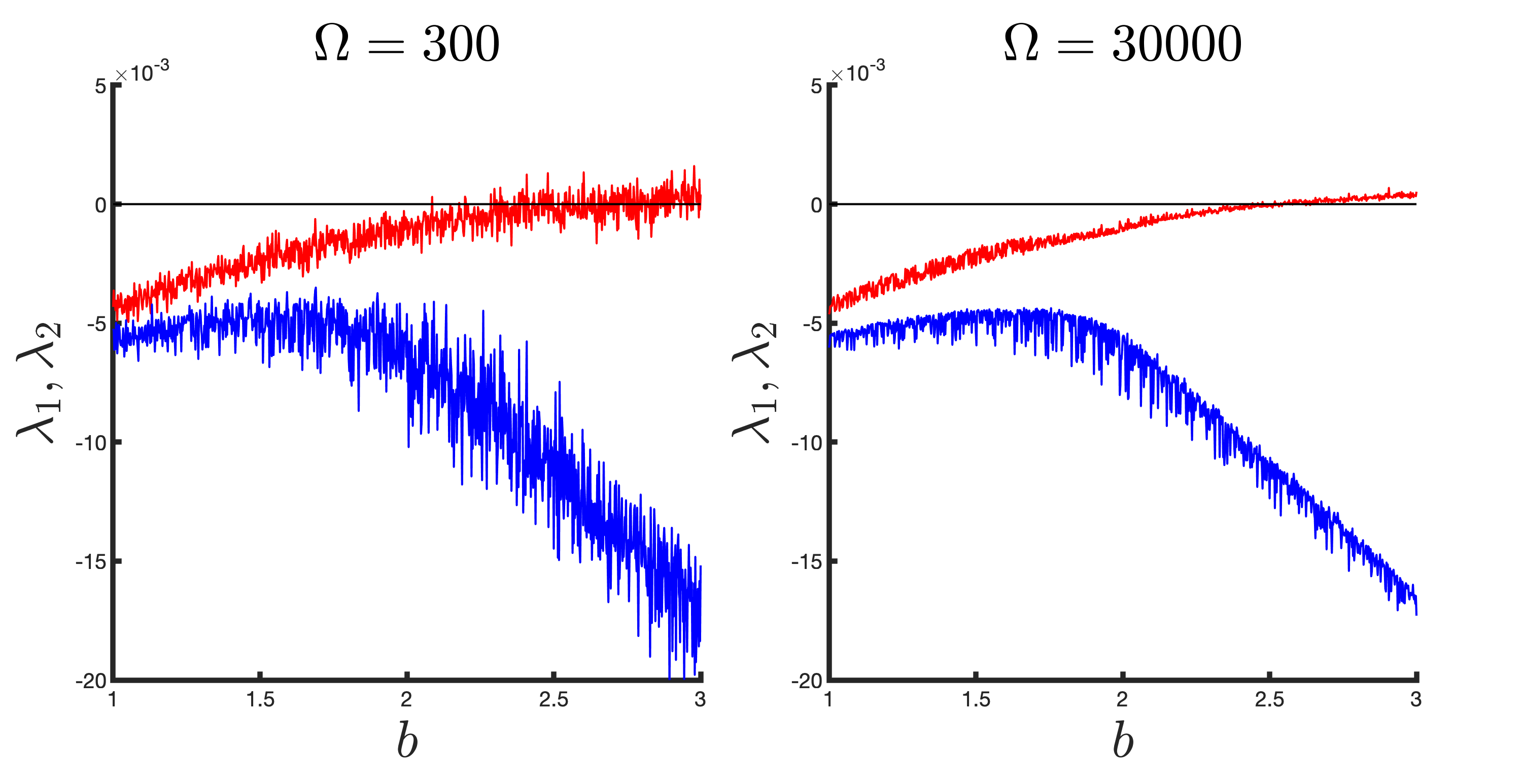}
    \caption{Conditioned Lyapunov exponents for the Brusselator RDS $(\theta,\varphi)$ for system sizes $\Omega=300,30000$. The red and blue lines represent the top Lyapunov exponent $\lambda_1$ and bottom Lyapunov exponent $\lambda_2$ respectively. }
    \label{fig:4}
\end{figure}

Figure \ref{fig:4} shows us the conditioned Lyapunov exponents are negative for almost all $b\in [1,3)$ except near the boundary $b=3$ where the top exponent $\lambda_1$ is null or positive. This implies for the the values of $b$ less than this boundary, the path of $\bbQ_\nu$-almost every initial condition in the phase space will be exponentially stable. That is, for such paths, there is a neighbourhood of its initial position such that the set of paths starting in this neighbourhood is uniformly mutually convergent at an exponential rate. If these paths were given the same noise realisation $\omega\in\Sigma$, then their paths would synchronise. We note the contrast to the deterministic Brusselator with random parameter $b$ \cite{Arnold1999TheSB} where the top exponent was negative for all parameter values studied.

Though some have succeeded \cite{f5037e4cc3ea4f31b0921943c03e2a27,Chemnitz2023, \cite{Arnold1999TheSB}}, proving the existence of non-negative exponents in stochastic Hopf bifurcations can be challenging \cite{Doan_2018}. Unfortunately we can only add to such examples where a mere conjecture of a non-negative $\lambda_1$ is all we can provide. In cases it can be shown that for $\lambda_1>0$, the path of $\bbQ_\nu$-almost every initial condition in the phase space will be exponentially unstable leading to local paths to diverge instead. In cases it can be shown that this leads to an interesting shaped global attractor of paths. Disappointingly we could not find numerical evidence of such for the Brusselator RDS $(\theta,\varphi)$. Instead we focus on the case where $\lambda_1<0$. Just as Newman \cite{NEWMAN_2018,newmanthesis} said, it is really only a local synchronisation property that is implied by the negativity of the Lyapunov exponents and so this begs the question: under what conditions under can we pass from such local properties to global stable synchronisation? We show that the Brusselator RDS $(\theta,\varphi)$ fulfils the necessary and sufficient conditions Newman found and highlight its consequences in the following theorem.

\begin{theoremx}\label{syncthm}
    If $\lambda_1<0$ then the Brusselator RDS $(\theta,\varphi)$ is (globally) stably synchronising with respect to $\bbQ_x$. That is,
    \begin{enumerate}[noitemsep,label=(\roman*)]
        \item For all $x,y\in M$,
        \[ \bbP^\bbQ_\nu(\{\omega \in \Sigma \ | \ \norm{\varphi_n(\omega,x)-\varphi_n(\omega,y)}\to 0 \text{ as } n\to\infty \}) = 1. \]
        \item For all $x\in M$,
              \[ \bbP^\bbQ_\nu(\{\omega \in \Sigma \ | \ \exists U\text{, a neighborhood of x, with }\diam(\varphi_n(\omega, U)) \to 0 \text{ as } n\to\infty \}) = 1 \]
    \end{enumerate}
    Here $\bbP^\bbQ_\nu$ is the marginal measure of $\bbQ_\nu$ on $\Sigma$.
        
    Hence, if $\lambda_1<0$ then there exists a global point attractor for $(\theta,\varphi)$. That is, there exists a random point $a:\Sigma\to M$ such that for $\bbP^\bbQ_\nu$-almost all $\omega\in\Sigma$,
        \[ \lim_{n\to\infty} \varphi_n(\theta^{-n}\omega, \cdot)_*\nu = \delta_{a(\omega)}, \]
    where $\delta_{a(\omega)}$ is the Dirac measure at $a(\omega)$ on $M$ and $\varphi_n(\theta^{-n}\omega, \cdot)_*\nu$ is the push forward measure of $\nu$ with respect to the map $x\mapsto\varphi_n(\theta^{-n}\omega, x).$
\end{theoremx}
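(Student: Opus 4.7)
The plan is to invoke Newman's criterion \cite{NEWMAN_2018,newmanthesis} for passing from local to global stable synchronisation, and then extract the point attractor directly from the synchronisation. The starting point is that $\lambda_1<0$ together with Theorem~\ref{osethm} yields local stable synchronisation: by a random stable manifold theorem for absorbing RDS (in the spirit of \cite{arnold2013random}, adapted using the invariant set $\Delta$), for $\bbQ_\nu$-almost every $(\omega,x)\in\Delta$ there is a random neighbourhood $U(\omega,x)\subset M$ of $x$ such that $\diam(\varphi_n(\omega,U(\omega,x)))$ decays exponentially. This already produces part (ii) on the $\bbQ_\nu$-full measure set $\Delta$, and I would then extend it to all $x\in M$ by an accessibility argument using the non-degeneracy of the noise on $M$.

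Next I would verify the remaining hypotheses Newman requires to upgrade local synchronisation to the global statement (i). These consist of: (a) $\bbP$-a.s. injectivity of the fibre maps $f_\omega$ on $M$, which holds for small enough time step $\tau$ because $f_\omega$ in \eqref{eq:fbruss} is a smooth $\cO(\tau)$-perturbation of the identity; (b) compactness and connectedness of $M$, which we arrange by the choice of $M\subset\bbR^2_{>0}$; (c) non-degeneracy of the noise on $M$, which follows from $F(x)F(x)^\intercal$ having full rank on the positive quadrant given the strict positivity of the propensities \eqref{brussprop}; and (d) quasi-ergodicity with connected full support of $\nu$, supplied by Theorem~\ref{quasthm}. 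Applying Newman's criterion then yields (i). For the point attractor, I would define $a(\omega):=\lim_{n\to\infty}\varphi_n(\theta^{-n}\omega,x_0)$ for an arbitrary reference $x_0\in M$; the limit exists by a Cauchy argument combining (i) applied to the pair $x_0$ and $f_{\theta^{-(n+1)}\omega}(x_0)$ with the cocycle identity, its independence of $x_0$ is immediate from (i), and the weak convergence $\varphi_n(\theta^{-n}\omega,\cdot)_*\nu\to\delta_{a(\omega)}$ follows by bounded convergence on the compact set $M$.

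The principal obstacle is that Newman's framework is developed for non-absorbing random dynamical systems, whereas our $(\theta,\varphi)$ is absorbing with cemetery state $\bbR^d\setminus M$. Expectations in Newman's argument must therefore be replaced throughout by expectations conditioned on $\{\tn>n\}$, and the reference invariant measure becomes $\bbQ_\nu$ rather than a classical stationary product measure. The quasi-ergodic structure of Theorem~\ref{quasthm} is precisely what legitimises this substitution, but delicate bookkeeping will be needed in the step of Newman's proof where stochastic control is used to bring two distant orbits into a common synchronising neighbourhood with positive probability: one must verify that this can be arranged without absorption in the interim, which is where the exponential convergence rate of the total variation bound in Theorem~\ref{quasthm} plays a quantitative role.
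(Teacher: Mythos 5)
Your overall route---reduce to Newman's local-to-global synchronisation criterion, with $\lambda_1<0$ supplying local stability and the quasi-ergodic Q-process supplying the reference invariant measure---is the same as the paper's, and your worry about transplanting Newman's non-absorbing framework onto the conditioned process is legitimate (the paper resolves it by applying Newman to the Q-process, which Theorem \ref{Qprocthm} exhibits as an honest Markov chain on $M$ with unique stationary measure $\nu$ of full support). However, there is one concrete gap: you never establish the \emph{two-point contractibility} of $(\theta,\varphi)$, i.e.\ that for all $x,y\in M$ and $\varepsilon>0$ one has $\bbP^\bbQ_\nu(\{\omega \ | \ \exists n:\ \norm{\varphi_n(\omega,x)-\varphi_n(\omega,y)}<\varepsilon\})>0$. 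This is precisely the hypothesis of Newman's Theorem 3.2.1 that is specific to the Brusselator, and it is the content of the paper's Lemma \ref{synclem}. Your substitute condition (c), full rank of $F(x)F(x)^\intercal$ at each point, concerns the one-point motion and does not deliver contractibility of the two-point motion $(\varphi_n(\omega,x),\varphi_n(\omega,y))$ driven by a \emph{common} noise realisation: what must actually be shown is that $F_0(x)-F_0(y)+(F(x)-F(y))\omega_0$ lands in $B(0,\varepsilon)$ for a set of $\omega_0$ of positive Gaussian measure, a statement about the range of the difference $F(x)-F(y)$ relative to the drift difference $F_0(x)-F_0(y)$. You correctly flag that ``stochastic control is used to bring two distant orbits into a common synchronising neighbourhood,'' but you treat this as bookkeeping inside Newman's proof, whereas it is a hypothesis you must verify for this system from the explicit affine-Gaussian form \eqref{eq:fbruss}. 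Conditions (a) (injectivity of $f_\omega$) and (b) are likewise not what Newman's theorem asks for; the other input the paper needs is uniqueness of the minimal set, which it gets from Lemma \ref{synclem} together with Theorem \ref{Qprocthm}(iv).

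A secondary issue: your construction of the attractor via $a(\omega)=\lim_{n\to\infty}\varphi_n(\theta^{-n}\omega,x_0)$ by a Cauchy argument does not follow directly from (i), because consecutive terms of the pullback sequence compare two-point motions started from \emph{different} shifts $\theta^{-n}\omega$ of the noise, so the almost-sure contraction in (i)---which is a statement for a single fixed realisation pushed forward---does not immediately give summability of the increments. The paper sidesteps this by citing Newman's Remark 2.3.2, which obtains the random Dirac disintegration of the invariant measure from stable synchronisation without such a pointwise Cauchy argument.
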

It is worth noting that in order to consider negative time and define the inverse function of $\theta$ as we do above, we extend our $\Sigma$ to include doubly infinite sequence extending in both directions in the natural way. 

Numerically this phenomenon can be seen in Figure \ref{fig:5} where we have fixed a noise realisation $\omega\in\Sigma$ and consider a fine grid of initial points $x\in[0,5]\times[0,5]$ at times $-N$ for some large $N\in\bbN$. Then, we show that when the process $(\theta,\varphi)$ runs from this time, the state at time $n=0$ is the same for all initial starting points on this grid. In other words, the grid shrinks to the random point $a(\omega)\in M$ as $N\to\infty$. This is seen for $b=1.0$ and $2.0$. As expected, we cannot confirm whether or not this is observed in the case where $b=3.0$.

\begin{figure}[h]
    \centering
    \includegraphics[width=\textwidth]{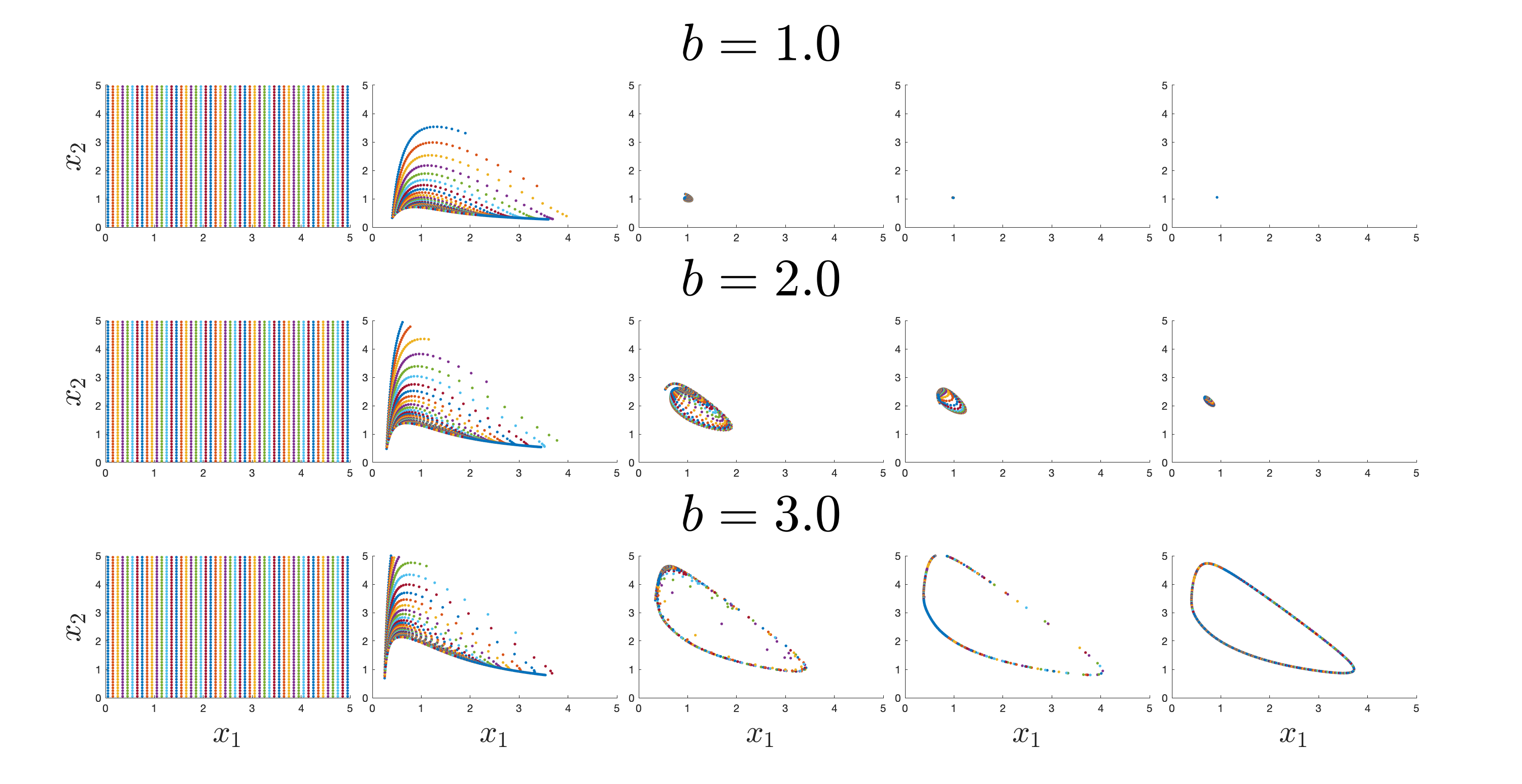}
    \caption{Pullback of the Brusselator RDS $(\theta,\varphi)$ for some randomly generated $\omega\in\Sigma^*$ with $b=1.0,2.0,3.0$ and $\Omega=1000$ from time $-N$ where $N=3000$. The first column of plots contain the fine grids of initial starting positions in $[0,5]\times[0,5]$ at time $n=-N$. The remaining columns contain plots of where each dot is mapped to under $(\theta,\varphi)$ at times $n = -1000, -500, -100$ and $n=0$ from left to right. }
    \label{fig:5}
\end{figure}

Although it remains an open question whether or not the Brusselator RDS $(\theta,\varphi)$ for $\{ X^\CLE_n \ | \ n\in\bbNz\}$ undergoes a D-bifurcation, we are able to prove a global synchronisation result for the case where $\lambda_1<0$, similar to the conjecture made in \cite{Arnold1999TheSB}.

\subsection{The LNA}\label{LNAsect}

We now turn our focus to $\{X^\LNA(t) \ | \ t\geq0\}$, the approximate process for $\{X^\CLE(t) \ | \ t\geq 0\}$. Again we begin by considering the general $d$-dimensional reaction network with its evolution defined by $r$ reactions.

Recall that for all $t\geq 0$ we are able to compute a distribution for $X^\LNA(t) = z(t) + \Omega^{-1/2}\xi(t)$ by computing a distribution for $\xi(t)$. This is obtained by solving \eqref{eq:xiSDE} (or equivalently the system \eqref{eq:RRE}, \eqref{eq:Code} and \eqref{eq:Vode}). As in the previous section, we need a formal model for the system's noise and how individual realisations of the noise effect the family of distributions for $\{X^\LNA(t) \ | \ t\geq0\}$. Fortunately, since the randomness of the model is inherited from $\{\xi(t) \ | \ t\geq0\}$, a solution of the SDE \eqref{eqn:xistate}, then our noise space is the canonical Wiener space of continuous functions $\omega^*:[0,\infty)\to\bbR^r$. We will denote this space by $(\Sigma^*,\cF^*,\bbP^*)$ where $\bbP^*$ is the Wiener measure. In a similar way to previously, for every $t\geq 0$, We want to define a map which takes an initial point $x\in\bbR^d$ and noise realisation $\omega^*\in\Sigma^*$ and maps it to where it would be on $\bbR^d$ at time $t$ if it evolved according the LNA under that noise realisation. However, unlike the CLE, the LNA depends on another input; the solution $\{z(t) \ | \ t\geq 0, \ z(0) = z_0\}$ to the RRE \eqref{eq:RRE} for some initial value $z_0\in\bbR^d$. It is clearly seen from the explicit forms of \eqref{eq:Code} and \eqref{eq:Vode} that the matrices $C$ and $V$ which are required to compute the perturbation vectors $\xi$ depend on $\{z(t) \ | \ t\geq0, \ z(0) = z_0\}$. This motivates us to define the following map. 

For every solution $\{z(t) \ | \ t\geq 0, \ z(0) = z_0\}$ of the RRE \eqref{eq:RRE}, define the family of continuous mappings from $\bbR^d$ to itself, $\{\Psi_{s,t}^{z_0} (\omega^*) \ | \ \omega^*\in\Sigma^*, \ s,t\geq 0\}$ where if $x\in\bbR^d$ then,
\begin{equation}\label{eq:LNAflow}
    \Psi_{s,t}^{z_0} (\omega^*)x = z(t) + C(s,t)(x-z(s)) + \Omega^{-1/2}\sum_{j=1}^r \int_s^t C(u,t)\nu_j\sqrt{\alpt_j(z(u))} \ d\omega_j^*(u)
\end{equation}
where $\omega^*(t) = (\omega_1^*(t),...,\omega_r^*(t))^\intercal$. Now if our systems state at time $s$ was $x\in\bbR^d$, then under dynamics of the LNA with RRE solution $\{z(t) \ | \ t\geq 0, \ z(0) = z_0\}$ and noise realisation $\omega^*\in\Sigma^*$, the system state at time $t$ is given by  $\Psi_{s,t}^{z_0} (\omega^*)x$. In this notation, $\Psi_{0,t}^{z_0} (\omega^*)x$ coincides with $X^\LNA(t)$ where $X^\LNA(0)=x$ and the RRE solution $\{z(t) \ | \ t\geq 0, \ z(0) = z_0\}$ is used.

Due to its non-autonomous nature, $\Psi_{s,t}^{z_0} (\omega^*)$ does not satisfy the cocycle property of RDS and hence the techniques used in the previous section cannot be applied here. Instead we classify this map in the following theorem.

\begin{theoremx}\label{stochdiffthm}
    Let $\{z(t) \ | \ t\geq 0, \ z(0) = z_0\}$ be a solution of the RRE \eqref{eq:RRE} for some $z_0\in\bbR^2$. Then the family of continuous mappings $\{\Psi_{s,t}^{z_0} (\omega^*) \ | \ \omega^*\in\Sigma^*, \ s,t\geq 0\}$ defined in \eqref{eq:LNAflow} forms a two-parameter stochastic flow of diffeomorphisms on $\bbR^d$. That is, for $\bbP^*$-almost all $\omega^*\in\Sigma^*$,
        \begin{enumerate}[noitemsep,label=(\roman*)]
            \item For all $s,t,u\in[0,\infty)$, $\Psi_{s,t}^{z_0} (\omega^*)=\Psi_{u,t}^{z_0} (\omega^*)\circ\Psi_{s,u}^{z_0} (\omega^*)$,
            \item For all $s\in[0,\infty)$, $\Psi_{s,s}^{z_0} (\omega^*)=\mathrm{id}_{\bbR^d}$,
            \item For all $s,t\in[0,\infty)$, $\Psi_{s,t}^{z_0} (\omega^*):\bbR^d\to\bbR^d$ is an onto homeomorphism,
            \item For all $s,t\in[0,\infty)$, $\Psi_{s,t}^{z_0} (\omega^*)x$ is differentiable with respect to $x\in\bbR^d$ and its derivative is continuous in $s,t$ and $x$.
        \end{enumerate}
    Moreover, for each $x\in\bbR^d, \ \omega^*\in\Sigma^*$ and $s,t\geq 0$, $\Psi_{s,t}^{z_0} (\omega^*)x$ denotes $X^\LNA(t)$ if $X^\LNA(s)=x$ and $\{X^\LNA(t) \ | \ t\geq0\}$ evolved according to the LNA \eqref{LNAansatz} with RRE solution $\{z(t) \ | \ t\geq 0, \ z(0) = z_0\}$ and noise realisation $\omega^*$.
\end{theoremx}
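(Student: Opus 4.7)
The plan is to exploit the fact that for each fixed $(s,t,\omega^*)$, the map $x \mapsto \Psi_{s,t}^{z_0}(\omega^*)x$ defined in \eqref{eq:LNAflow} is affine in $x$, of the form $C(s,t)x + b_{s,t}(\omega^*)$ where $b_{s,t}(\omega^*) := z(t) - C(s,t)z(s) + \Omega^{-1/2}\sum_{j=1}^r \int_s^t C(u,t)\nu_j\sqrt{\alpt_j(z(u))}\,d\omega_j^*(u)$ absorbs all $x$-independent contributions. This representation reduces properties (ii)--(iv) to classical facts about the fundamental matrix $C$ solving \eqref{eq:Code}.

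First, property (ii) is immediate, since $C(s,s) = I_d$ and the stochastic integral over $[s,s]$ vanishes, leaving $\Psi_{s,s}^{z_0}(\omega^*)x = z(s) + (x-z(s)) = x$. For (iii) and (iv), I would invoke the standard theory of linear ODEs: $C(s,t)$ is invertible with $C(s,t)^{-1} = C(t,s)$, and $(s,t) \mapsto C(s,t)$ is jointly $\cC^1$. Hence $x \mapsto \Psi_{s,t}^{z_0}(\omega^*)x$ is an invertible affine map with inverse $y \mapsto C(t,s)(y - b_{s,t}(\omega^*))$, so it is a global $\cC^\infty$ diffeomorphism of $\bbR^d$ whose spatial derivative $C(s,t)$ is continuous in $(s,t,x)$.

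For the cocycle property (i), the key algebraic identity is the composition rule $C(v,t) = C(u,t)C(v,u)$ for all $s \leq v \leq u \leq t$. Substituting $\Psi_{s,u}^{z_0}(\omega^*)x$ into $\Psi_{u,t}^{z_0}(\omega^*)(\cdot)$, pulling the deterministic factor $C(u,t)$ inside each Wiener integral on $[s,u]$ and combining it with the integral on $[u,t]$ collapses the result into the single integral on $[s,t]$ appearing in $\Psi_{s,t}^{z_0}(\omega^*)x$. Finally, for the identification with $X^\LNA$ I would appeal to the variation-of-constants formula for the linear SDE \eqref{eq:xiSDE}: choosing $\xi(0) := \Omega^{1/2}(x - z(0))$ so that $X^\LNA(0) = x$ and identifying $\omega^*$ with the driving Brownian motion $(W^1,\dots,W^r)$, the ansatz $X^\LNA(t) = z(t) + \Omega^{-1/2}\xi(t)$ reproduces exactly \eqref{eq:LNAflow} with $s=0$.

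The main obstacle is the measure-theoretic subtlety attached to the "for $\bbP^*$-almost all $\omega^*$" quantifier, particularly in (i): the algebraic computation above yields the cocycle identity $\bbP^*$-almost surely for each fixed triple $(s,u,t)$, but the statement demands a single null set exceptional to all triples simultaneously. The standard remedy is to establish joint continuity of both sides in $(s,u,t,x)$ and then reduce the identity to a countable dense set of triples. This joint continuity follows from Kolmogorov's continuity criterion applied to the Wiener integral $(s,t) \mapsto \int_s^t C(u,t)\nu_j\sqrt{\alpt_j(z(u))}\,d\omega_j^*(u)$: the integrand is smooth and bounded on any compact time window because $z$ is smooth and the propensities $\alpt_j$ are polynomial in $z$, so the required $L^p$ moment estimates are routine. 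The same continuity argument also upgrades (ii)--(iv) to hold on a common full-measure event, completing the reduction to a two-parameter stochastic flow of diffeomorphisms.
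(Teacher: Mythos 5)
Your proof is correct, but it takes a genuinely different route from the paper. The paper works top--down: it differentiates the ansatz \eqref{LNAansatz}, substitutes \eqref{eq:RRE} and \eqref{eq:xiSDE} to exhibit $X^\LNA$ as the solution of a non-autonomous linear SDE with smooth affine coefficients (its equation \eqref{eq:stochflowSDE}), and then cites Kunita's general theorem that such SDEs generate two-parameter stochastic flows of diffeomorphisms, recovering the explicit formula \eqref{eq:LNAflow} only afterwards by solving that SDE. You instead work bottom--up from the explicit affine representation $\Psi_{s,t}^{z_0}(\omega^*)x = C(s,t)x + b_{s,t}(\omega^*)$, verifying (ii)--(iv) from the classical properties of the fundamental matrix of \eqref{eq:Code} (invertibility, $C(s,t)^{-1}=C(t,s)$, joint regularity) and (i) from the transition-matrix identity $C(s,t)=C(u,t)C(s,u)$ together with the pull-in of the deterministic factor through the Wiener integrals; the identification with $X^\LNA$ then comes from variation of constants for \eqref{eq:xiSDE}. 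Your approach is more elementary and self-contained, and it has the merit of confronting head-on the quantifier issue that the paper delegates to Kunita: the almost-sure identity in (i) holds a priori only for each fixed triple $(s,u,t)$, and your reduction to a countable dense set via joint continuity (Kolmogorov's criterion for the Gaussian integral with deterministic, locally bounded integrand --- or, even more simply, the pathwise-continuous decomposition $C(0,t)\int_s^t C(0,u)^{-1}(\cdots)\,d\omega^*_j(u)$) is exactly the right repair. The paper's route is shorter and generalises immediately to nonlinear SDEs, but for this linear flow your direct verification loses nothing. The only point worth tightening is that the composition rule in (i) is asserted for \emph{all} $s,t,u\in[0,\infty)$, not just ordered triples, so you should note the standard convention $\int_s^t=-\int_t^s$ and $C(t,s)=C(s,t)^{-1}$ to cover the remaining orderings.
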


Using the theory of two-parameter stochastic flows of diffeomorphisms \cite{Kunita_1990}, one could perform a similar analysis to the last section, on the long-tine dynamics of the LNA. However, as mentioned before, \cite{Scott2006,Tomita1974,Boland2008,Ito2010,Minas2017} showed us that due to its nonlinearity in propensity functions, the LNA with finite $\Omega$ will fail to capture the true dynamics of the Brusselator \eqref{eq:rtc2} as $t\to\infty$ and so we question the need for such analysis. Instead, we focus on the finite time stability of the LNA and show that despite its aforementioned accuracy over finite times, paths of $\{X^\LNA(t) \ | \ t\geq0\}$ and $\{X^\CLE_n \ | \ n\in\bbNz\}$ exhibit very different finite time stabilities.

To this end, the object we study is an extension of the maximal finite time Lyapunov exponent to our stochastic flows $\{\Psi_{s,t}^{z_0} (\omega^*) \ | \ \omega^*\in\Sigma^*, \ s,t\geq 0\}$. 

\begin{definition}[LNA's maximal finite time Lyapunov exponent]\label{ftlyapexp}
    Let $\omega^*\in\Sigma^*$, $x\in\bbR^d$, $T>0$ and $\{z(t) \ | \ t\geq 0, \ z(0) = z_0\}$ be a solution of the RRE \eqref{eq:RRE}.
    
    Suppose that we have two paths which evolve under the LNA with the same RRE solution $\{z(t) \ | \ t\geq 0, \ z(0) = z_0\}$ and noise realisation $\omega^*$, one starting from $x$ and the other infinitesimally close to $x$. Let the maximal average rate of separation of these paths over the time interval $[0,T]$ be denoted by $\lambda^{z_0}_T(\omega^*,x)$. Note that this comparison is taken over all infinitesimal displacements from $x$. 
    
    The random scalar quantity $\lambda^{z_0}_T(\omega^*,x)$ is the LNA's maximal finite time Lyapunov exponent with respect to $\omega^*, \ x, \ T$ and $\{z(t) \ | \ t\geq 0, \ z(0) = z_0\}$. 
\end{definition}

We explore the explicit form of this quantity in the following theorem.

\begin{theoremx}\label{mftlethm}
    For every stochastic reaction network modelled by the LNA, the maximal finite time Lyapunov exponent $\lambda^{z_0}_T(\omega^*,x)$ is given by,
        \[ \lambda^{z_0}_T = \lambda^{z_0}_T(\omega^*,x) =  \frac{1}{T}\log\norm{C(0,T)}. \]
    Here the matrix norm $\norm{C(0,T)}$ is induced by the Euclidean norm on $\bbR^d$, or equivalently, the largest singular value of $C(0,T)$. 

    In particular, the LNA's maximal finite time Lyapunov exponent is independent of the noise realisation $\omega^*$ and initial point $x$.
\end{theoremx}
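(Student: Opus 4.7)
The plan is to exploit the affine structure of the LNA flow in the initial condition $x$. Inspecting equation \eqref{eq:LNAflow} with $s=0$, I would observe that neither $z(T)$ nor the It\^o integral $\Omega^{-1/2}\sum_{j=1}^r \int_0^T C(u,T)\nu_j\sqrt{\alpt_j(z(u))}\,d\omega_j^*(u)$ depends on $x$, and the only $x$-dependence sits in the deterministic linear term $C(0,T)(x-z(0))$. Consequently, differentiation with respect to $x$ yields $\mathrm{D}_x\Psi^{z_0}_{0,T}(\omega^*)x = C(0,T)$ pointwise, a deterministic matrix that is the same for every noise sample $\omega^*\in\Sigma^*$ and every base point $x\in\bbR^d$. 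Technically this is justified because the stochastic integrand in \eqref{eq:LNAflow} is a function of $z$ and $u$ only, so differentiation in $x$ commutes trivially with the integral.

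Next I would translate Definition \ref{ftlyapexp} into this tangent-map language. For an infinitesimal displacement $v$ from $x$, two LNA paths driven by the common noise $\omega^*$ and the common RRE solution $\{z(t)\}$ separate by $\mathrm{D}_x\Psi^{z_0}_{0,T}(\omega^*)x\cdot v + o(\norm{v})$ after time $T$, so the average exponential rate of separation along $v$ is $\tfrac{1}{T}\log(\norm{\mathrm{D}_x\Psi^{z_0}_{0,T}(\omega^*)x\cdot v}/\norm{v})$. Maximising over all directions $v\neq 0$ yields $\tfrac{1}{T}\log\norm{\mathrm{D}_x\Psi^{z_0}_{0,T}(\omega^*)x}$, where the matrix norm is the operator norm induced by the Euclidean norm and therefore equals the largest singular value. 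Substituting the identity from the previous step gives $\lambda^{z_0}_T = \tfrac{1}{T}\log\norm{C(0,T)}$, and the independence of $\omega^*$ and $x$ is immediate.

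There is no substantive analytical obstacle here; the result is a direct consequence of the LNA being a linearisation, which forces the noise to enter additively through a state-free integrand and the initial-state dependence to be affine through the fundamental matrix $C$. The point worth emphasising — and what motivates the contrast with the CLE developed in the rest of the paper — is precisely this degeneracy: for the CLE the diffusion coefficient is state-dependent, so the derivative of the associated flow would inherit nontrivial dependence on both $\omega^*$ and $x$, whereas in the LNA the tangent dynamics collapse to a single deterministic matrix $C(0,T)$, which is exactly what enables the clean, state- and noise-free formula above and what makes the subsequent dynamical comparison with $\{X^\CLE_n\mid n\in\bbNz\}$ meaningful.
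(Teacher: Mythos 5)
Your proposal is correct and follows essentially the same route as the paper's proof: both identify the tangent map $\mathrm{D}_x\Psi^{z_0}_{0,T}(\omega^*)x$ with the deterministic matrix $C(0,T)$ (via the affine structure of \eqref{eq:LNAflow} in $x$) and then maximise the separation rate over directions to obtain the induced operator norm, i.e.\ the largest singular value of $C(0,T)$. Your observation that the map is exactly affine in $x$ even makes the Taylor-remainder bookkeeping in the paper's version unnecessary, but this is a cosmetic difference, not a different argument.
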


An important remark is that due to the noise independence of the LNA's maximal finite time Lyapunov exponent, we do not need to isolate our analysis to a subset of $\Sigma^*$ for which paths of $\{X^\LNA(t) \ | \ t\geq 0\}$ stay in the positive quadrant indefinitely; the exponent will have the same value for these paths.

If one desired, the LNA's maximal Lyapunov exponent $\lambda^{z_0}$ could be obtained by taking the limit,
\[ \lambda^{z_0} = \limsup_{T\to\infty}\frac{1}{T}\log\norm{C(0,T)}. \]

\begin{figure}
    \centering
    \includegraphics[width=\textwidth]{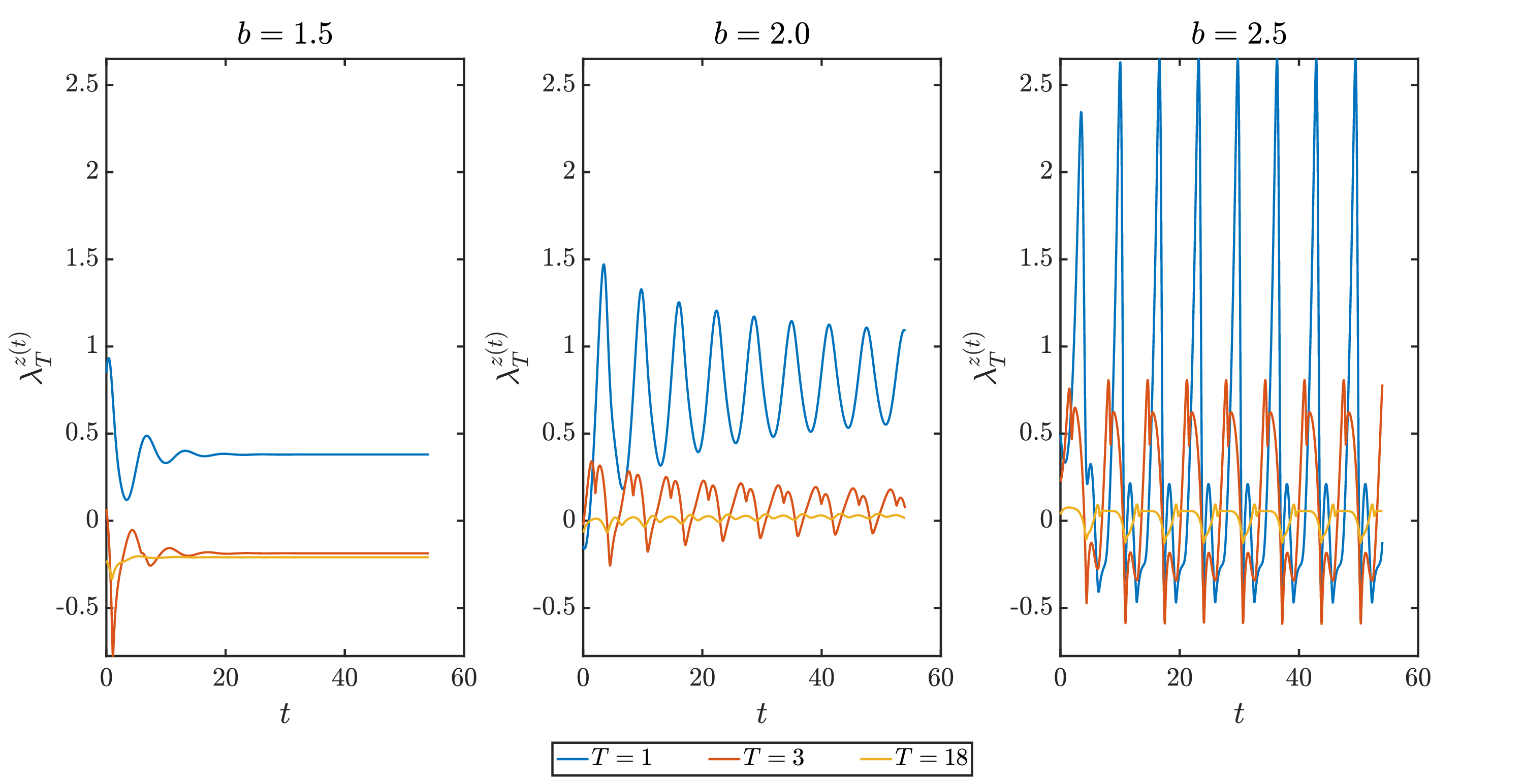}
    \caption{The LNA's maximal Lyapunov exponent $\lambda^{z_0}_T$ for $z_0\in\{z(t) \ | \ t\geq0, \ z(0) = (0.75,2)^\intercal\}$ computed for the Brusselator reaction network with $t\in[0,54]$. The first, second and third plot correspond to $b=1.5,2.0$ and $3$ respectively. Within each plot, the blue, orange and yellow lines correspond to $T=1, 3$ and $18$ respectively.}
    \label{fig:6}
\end{figure}

We now apply this theorem to study the Brusselator reaction network. In Figure \ref{fig:6}, we compute $\lambda^{z_0}_T$ as $z_0$ varies for roughly a sixth of an oscillation $(T=1)$, half an oscillation $(T=3)$ and three oscillations $(T=18)$. In particular we study $\lambda^{z(t)}_T$ as $t$ varies, where $\{z(t) \ | \ t\geq0, \ z(0) = (0.75,2)^\intercal\}$ is a solution to the RRE \eqref{eq:RRE} starting from $(0.75,2)^\intercal$. That is, how does the maximal finite time Lyapunov exponent vary if the LNA is simulated at various points along the RRE solution starting from $(0.75,2)^\intercal$. To this end, we are able to investigate the finite time behaviour of not just the $\{X^\LNA(t) \ t\geq 0\}$ starting at any point on $\bbR^2$ with $z_0 = (0.75,2)^\intercal$ but also the finite time behaviour of any method similar to Minas and Rand's model, discussed earlier, where the LNA is used many times from various points along the RRE solution. We repeat this for bifurcation parameter values $b=1.5,2.0$ and $2.5$. 

For $b < 2$ and $T\leq 1$, $\lambda^{z_0}_T>0$ for all $z_0$ tracing the solution $\{z(t) \ | \ t\geq0 \}$ indicating separation of all paths starting in the Euclidean plane for such $z_0$. This behaviour is the same for $b=2$ and almost all $z_0$ along this RRE solution. For $b=2.5$ we have separation for all $z_0$, except for a short interval in the limit cycle solution of $\{z(t) \ | \ t\geq0 \}$. As $T$ increases the rate of separation decreases. For $b\leq 2$ we can see numerically that for $T$ large enough, $\lambda^{z_0}_T$ will cross zero for all $z_0\in \{z(t) \ | \ t\geq0, \ z(0) = (0.75,2)^\intercal\}$ resulting in eventual contraction of paths and hence synchronisation in the local sense. 

To aid comparison, we introduce the CLE's maximal finite time Lyapunov exponent.
\begin{definition}[CLE's maximal finite time Lyapunov exponent]\label{ftlyapexp2}
    Let $\omega\in\Sigma$, $x\in\bbR^d$ and $T \in \{ t\in[0,\infty) \ | \ t = n\tau, \ n\in\bbNz \}$. The CLE's maximal finite time Lyapunov exponent with respect to $\omega, \ x$ and $T$ is given by,
    \begin{equation*}
        \lambda_T(\omega,x) := \sup_{v\in T_x\bbR^2} \frac{1}{T}\log \norm{\mathrm{D}_x\varphi_{T/\tau}(\omega,x)v},
    \end{equation*}
    where $T_x\bbR^2\subset\bbR^2$ denotes the tangent space of $\bbR^2$ at $x$.
\end{definition}

Note that this is equivalent to defining $\lambda_T(\omega,x)$ as the maximal average rate of separation over the time interval $[0,T]$ of two paths which evolve under the CLE with noise realisation $\omega$ and one starting from $x$ and the other infinitesimally close to $x$. Also, by taking the limit $T\to\infty$ we return to the maximal Lyapunov exponent,
\[ \lambda_1 = \lim_{T\to\infty}\lambda_T(\omega,x) = \lim_{T\to\infty}\sup_{v\in T_x\bbR^2} \frac{1}{T}\log \norm{\mathrm{D}_x\varphi_{T/\tau}(\omega,x)v}. \]

\begin{figure}
    \centering
    \includegraphics[width=\textwidth]{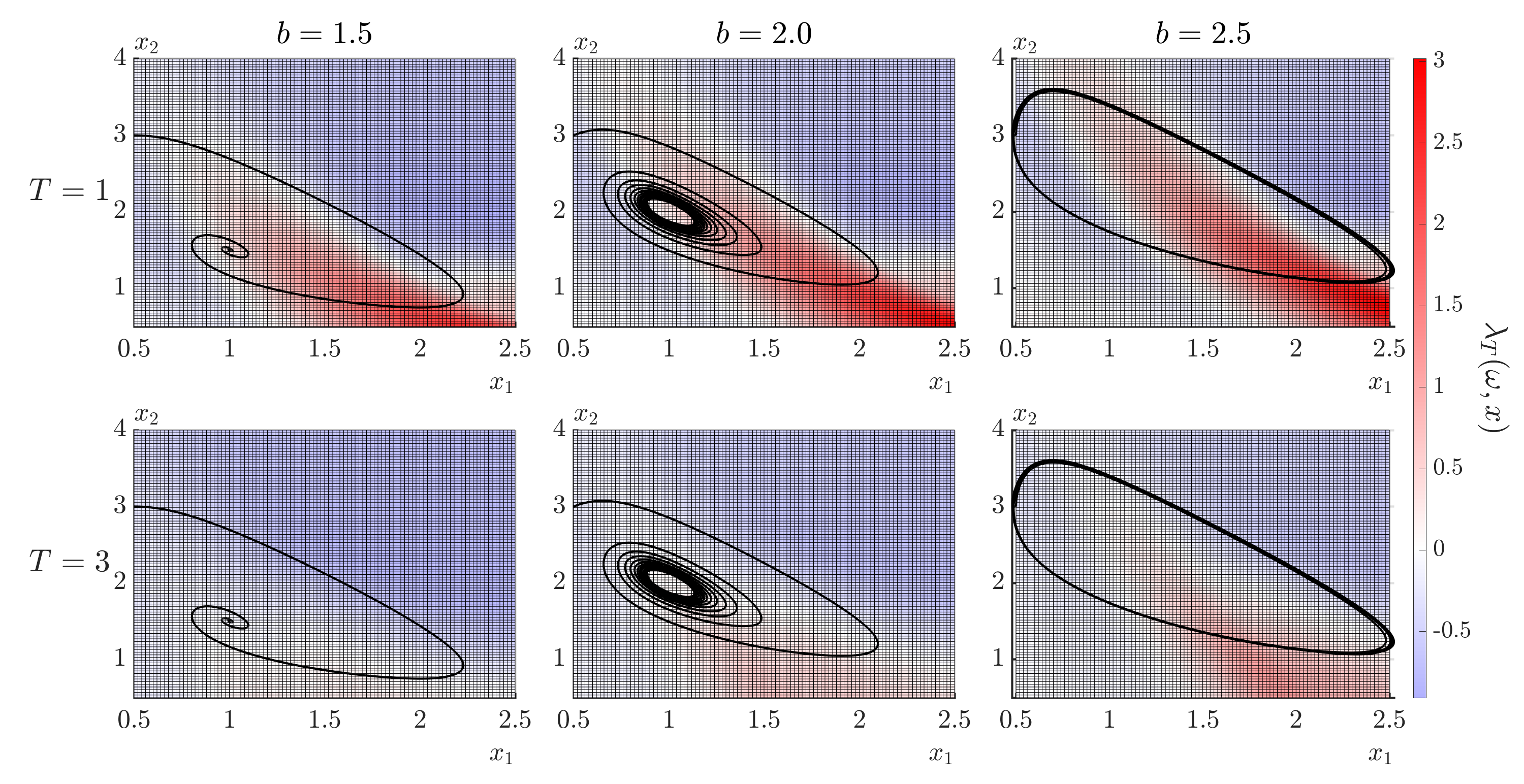}
    \caption{Landscapes of the CLE's average maximal finite time Lyapunov exponent $\lambda_T(\omega,x)$ for the Brusselator reaction network over $10^4$ realisations of $\omega\in\Sigma$ computed for $x\in[0.5,2.5]\times[0.5,4]$. The first and second row correspond to $T=1$ and $3$ respectively and the first, second and third columns correspond to $b=1.5,2.0$ and $3$ respectively. Blue regions indicate $\lambda_T(\omega,x)<0$, white regions indicate $\lambda_T(\omega,x)=0$ and red regions indicate $\lambda_T(\omega,x)>0$. In each plot, the black line denotes the RRE solution $\{z(t) \ | \ t\geq0, \ z(0) = (0.5,3)^\intercal\}$. }
    \label{fig:7}
\end{figure}

In Figure \ref{fig:7} we depict a landscape of the CLE's maximal finite time Lyapunov exponent for the Brusselator reaction network. For the same values of $b$ investigated in Figure \ref{fig:6} and for $T=1$ and $3$. Over $10^4$ noise realisations, we compute the average $\lambda_T(\omega,x)$ over a mesh of $100\times 100$ points in $\bbR^2$, close to the RRE solution $\{z(t) \ | \ t\geq0, \ z(0)=(0.5,3)^\intercal \}$.

Immediately we see very different finite time instabilities for paths of the CLE as we do with paths of the LNA. That is, contrary to the LNA's maximal Lyapunov exponents, $\lambda_T(\omega,x)$ is dependent on the initial point $x\in\bbR^2$, with some parts of the state space exhibiting separation while most of the state space exhibits contraction of local paths. For small $T$ this phenomenon is more prominent while, on average, for $T\geq 3$, $\lambda_T(\omega,x)<0$ for almost all $x\in\bbR^2$ indicating contraction of almost all local paths. We highlight two key differences in the finite time dynamics of the CLE and LNA. First, for small $T<3$, almost all paths of the LNA exhibit separation from their neighbouring paths whereas there is a mixture of separation and contraction of paths of the CLE depending on where they are in the state space. This contrast is even more pronounced with smaller $T$. Secondly, for $b\geq 2$ the rate at which regions with positive exponents decrease with $T$ for the LNA appears longer than the average time frame for such regions of the CLE to do the same.

We conclude that, despite claims that the LNA will approximate oscillatory dynamics the CLE well for short times, the LNA fails to capture some dynamical properties of oscillatory paths of the CLE. We have also shown that this is the case for all types of models seen in \cite{Minas2017,Minas2024} where repeated use of the LNA is used. With such models their accuracy depends on only using the LNA over very short times and so it is alarming to see larger discrepancies in the LNAs and CLEs finite time maximal Lyapunov exponents as the time interval of interest decreases. 

\section{The CLE and quasi-ergodicity}

Using the maps defined in \eqref{shift}, \eqref{f_omega} and \eqref{varphi} and the noise space outlined at the start of Section \ref{CLEsect}, we leave the list of conditions required to prove Theorem \ref{rdsthm} to be readily verified by the reader. 

We now focus on our example, the Brusselator, whose RDS $(\theta,\varphi)$ evolves according to \eqref{varphi},\eqref{eq:fbruss} and \eqref{eq:Fbrus}. In order to prove the existence of a quasi-ergodic measure we follow the theory outlined by Castro in \cite{castro2024existence,CASTRO_2023}. First, we recast our perspective to time-homogeneous Markov chains.

\begin{corollary}\label{Markovcor}
    The Brusselator RDS $(\theta,\varphi)$ induces the\textit{ time-homogeneous absorbed Markov chain}
    \[ \varphi = \left(  \Sigma\times\bbR^2, (\cF_n\otimes\cB(\bbR^2))_{n\in \bbNz}, (\varphi_n)_{n\in\bbNz}, (\cP^n)_{n\in \bbN}, (\bbP_x)_{x\in \bbR^2}    \right) \]
    on the state space $(\bbR^2,\cB(\bbR^2))$ in the sense of \cite[Definition III.1.1]{rogerswilliams1} where $\bbP_x := \bbP\otimes\delta_x$ and $\cP^n(x,A) = \bbP(\{\omega\in\Sigma \ | \ \varphi_n(\omega,x)\in A\})$.
\end{corollary}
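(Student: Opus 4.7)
The plan is to verify the conditions of \cite[Definition III.1.1]{rogerswilliams1} directly, leveraging the results already established in Theorem \ref{rdsthm}. Measurability of each $\varphi_n$, the existence of the cemetery state $\{\partial\} = \bbR^2 \setminus M$, and the absorption property are already given. What remains is to check that $(\varphi_n)_{n \in \bbNz}$ is a time-homogeneous Markov chain under the family $\{\bbP_x = \bbP \otimes \delta_x\}_{x \in \bbR^2}$ with the claimed transition kernels.

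The key structural fact is that $(\Sigma, \cF, \bbP)$ is a product probability space with independent coordinates: writing $\cF_n$ for the $\sigma$-algebra generated by the projections $\omega \mapsto \omega_0, \ldots, \omega \mapsto \omega_{n-1}$, the $\sigma$-algebra $\cF_n$ is independent of $\theta^{-n}\cF$, and the shift $\theta$ is $\bbP$-preserving. First, I would establish the Markov property. Fix $x \in \bbR^2$, $n, m \in \bbNz$, and $A \in \cB(\bbR^2)$. By the cocycle property (part (iii) of Theorem \ref{rdsthm}),
\[
\varphi_{n+m}(\omega, x) = \varphi_m(\theta^n \omega, \varphi_n(\omega, x)).
\]
The inner factor $\varphi_n(\omega, x)$ is $\cF_n$-measurable by the memoryless property (part (b) of Theorem \ref{rdsthm}), whereas the map $(\omega, y) \mapsto \varphi_m(\theta^n \omega, y)$ depends on $\omega$ only through $(\omega_n, \ldots, \omega_{n+m-1})$, which are independent of $\cF_n$. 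A standard freezing-and-independence argument then gives
\[
\bbP_x\bigl(\varphi_{n+m} \in A \,\big|\, \cF_n\bigr) = \bbP\bigl(\{\omega : \varphi_m(\theta^n\omega, \varphi_n(\omega, x)) \in A\} \,\big|\, \cF_n\bigr) = \cP^m(\varphi_n(\omega, x), A) \quad \bbP_x\text{-a.s.},
\]
which is precisely the Markov property with kernel $\cP^m$.

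Next, time-homogeneity follows immediately from the measure-preserving property of $\theta$: for any $y \in \bbR^2$ and $A \in \cB(\bbR^2)$,
\[
\bbP(\{\omega : \varphi_m(\theta^n\omega, y) \in A\}) = \bbP(\{\omega : \varphi_m(\omega, y) \in A\}) = \cP^m(y, A),
\]
so the transition kernel between times $n$ and $n+m$ depends only on $m$. The Chapman--Kolmogorov identity $\cP^{n+m} = \cP^n \cP^m$ is then a direct consequence of the Markov property applied to indicator functions. Finally, the absorption condition (B) of Theorem \ref{rdsthm} shows that $\{\partial\}$ is absorbing in the Markov sense, i.e.\ $\cP^n(\partial, \{\partial\}) = 1$ for all $n$, so the chain is genuinely absorbed at $\partial$ in the sense required by \cite{rogerswilliams1}.

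The only mildly delicate point is justifying the freezing step in the Markov property computation, i.e.\ that for a product-measurable map $(y, \omega') \mapsto g(y, \omega')$ with $\omega'$ independent of $\cF_n$, one has $\bbE[g(Y, \omega') \mid \cF_n] = \bbE[g(y, \omega')]\big|_{y=Y}$ whenever $Y$ is $\cF_n$-measurable; this is routine from Fubini and a monotone class argument, so no substantive obstacle arises. Everything else is bookkeeping against the RDS structure already proved.
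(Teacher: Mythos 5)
Your proposal is correct: it is the standard argument establishing the correspondence between a memoryless RDS and a time-homogeneous Markov chain (cocycle property plus independence of $\cF_n$ from the future coordinates, the freezing lemma, and measure-preservation of the shift for time-homogeneity), which is exactly the content of the reference the paper cites in lieu of a proof. No gaps; the only point worth being explicit about in a written version is the joint measurability of $(\omega,y)\mapsto\varphi_m(\theta^n\omega,y)$ needed for the freezing step, which you correctly flag as routine.
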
 
\begin{proof}
    See \cite[Chapter 2.5]{newmanthesis}.
\end{proof}

Here, $(\bbP_x)_{x\in \bbR^2}$ is a family of probability measures on $(\Sigma,(\cF_n)_{n\in \bbNz},\cF)$ where $\bbP_x$ is the probability measure for the process starting at $x\in \bbR^2$ and $(\cP^n)_{n\in \bbN}$ is a transition function on the state space $(\bbR^2,\cB(\bbR^2))$; a family of kernels $\cP^n:\bbR^2\times\cB(\bbR^2)\to [0,1]$ such that for $n,m\in\bbN, \ x\in \bbR^2$ and $A\in\cB(\bbR^2)$, $\cP^n(x,\cdot)$ is a probability measure on $\cB(\bbR^2)$, $\cP^n(\cdot,A)$ is a $\cB(\bbR^2)$-measurable function and $\cP^n$ satisfies the Chapman-Kolmogorov equation,
        \[\cP^{n+m}(x,A) = \int_{\bbR^2} \cP^n(y,A) \cP^m(x,dy).\]
We will refer to the time-homogeneous absorbed Markov chain $\varphi$ as the Brusselator Markov chain. It turns out the existence of the quasi-ergodic measure is deeply rooted in the properties of the transition kernel $\cP:=\cP^1$. Throughout the rest of this section, $\norm{\cdot}$ represents the standard Euclidean norm on $\bbR^2$.

\begin{lemma}
    Let $x\in M$ and $A\in\cB(\bbR^2)$. The Brusselator Markov chain $\varphi$ has
    \begin{equation*}
        \cP(x,A) = \int_A \kappa(x,y) \ d\ell^2(y)
    \end{equation*}
    where
    \begin{equation*}
        \kappa(x,y) := \frac{\Omega}{2\pi\tau\sqrt{\gamma(x)}}  \exp\left[-\frac{(y-F_0(x))^\intercal  \Gamma(x)^{-1}(y-F_0(x))}{2}\right]
    \end{equation*}
    and for $x = (x_1,x_2)^\intercal$,
    \begin{equation*}
        \Gamma(x) := \frac{\tau}{\Omega}  \begin{pmatrix}
         a+(1+b)x_1+x_1^2x_2 & -bx_1-x_1^2x_2 \\
         -bx_1-x_1^2x_2 & bx_1+x_1^2x_2
        \end{pmatrix},
        \quad \gamma(x) := \left(\frac{\Omega}{\tau}\right)^2 \det\Gamma(x) 
    \end{equation*}
    Note that $\cP$ is well defined for $x\in M \subset \bbR^2_{>0}$ as the roots of $\gamma$ are all non-positive.
\end{lemma}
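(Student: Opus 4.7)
The plan is to recognise that for $x \in M$ the one-step transition is simply given by the map $f_\omega$ in \eqref{eq:fbruss}, and that under $\bbP$ the relevant random variable $\omega_0$ is a standard Gaussian in $\bbR^4$, so the transition density must be that of a multivariate normal with mean $F_0(x)$ and covariance $F(x)F(x)^\intercal$. Concretely, by the definition of $\varphi$ in \eqref{varphi} together with Corollary \ref{Markovcor},
\[
 \cP(x,A) = \bbP(\{\omega\in\Sigma \ | \ \varphi_1(\omega,x)\in A\}) = \bbP(\{\omega\in\Sigma \ | \ F_0(x)+F(x)\omega_0\in A\}),
\]
for every $x\in M$ and $A\in\cB(\bbR^2)$. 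The first coordinate projection $\omega\mapsto\omega_0$ pushes $\bbP$ forward to $\eta$, i.e.\ to the $\text{MVN}(0,I_4)$ distribution on $\bbR^4$, so $F(x)\omega_0$ has the $\text{MVN}(0,F(x)F(x)^\intercal)$ distribution on $\bbR^2$, and $f_\omega(x)$ has the $\text{MVN}(F_0(x),F(x)F(x)^\intercal)$ distribution.

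Next I would verify by a direct $2\times 4$-by-$4\times 2$ matrix multiplication that
\[
 F(x)F(x)^\intercal = \frac{\tau}{\Omega}\begin{pmatrix} a+(1+b)x_1+x_1^2x_2 & -bx_1-x_1^2x_2 \\ -bx_1-x_1^2x_2 & bx_1+x_1^2x_2 \end{pmatrix} = \Gamma(x),
\]
and then compute
\[
 \det\Gamma(x) = \left(\frac{\tau}{\Omega}\right)^2\bigl[(a+(1+b)x_1+x_1^2x_2)(bx_1+x_1^2x_2)-(bx_1+x_1^2x_2)^2\bigr] = \left(\frac{\tau}{\Omega}\right)^2 x_1(a+x_1)(b+x_1x_2),
\]
so that $\gamma(x)=(\Omega/\tau)^2\det\Gamma(x) = x_1(a+x_1)(b+x_1x_2)$. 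Since $a,b>0$, the three linear factors vanish only on the lines $x_1=0$, $x_1=-a$, and the hyperbola $x_1x_2=-b$, none of which meet $\bbR^2_{>0}$; so $\gamma(x)>0$ for $x\in M\subset\bbR^2_{>0}$, and since the $(1,1)$-entry of $\Gamma(x)$ is also positive, $\Gamma(x)$ is positive-definite and the Gaussian density is well defined on $M$.

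Finally I would substitute into the standard multivariate normal density: with mean $F_0(x)$ and covariance $\Gamma(x)$,
\[
 \kappa(x,y) = \frac{1}{2\pi\sqrt{\det\Gamma(x)}}\exp\!\left[-\tfrac{1}{2}(y-F_0(x))^\intercal\Gamma(x)^{-1}(y-F_0(x))\right],
\]
and replace $\sqrt{\det\Gamma(x)} = (\tau/\Omega)\sqrt{\gamma(x)}$ to recover exactly the claimed prefactor $\Omega/(2\pi\tau\sqrt{\gamma(x)})$. The identity $\cP(x,A)=\int_A \kappa(x,y)\,d\ell^2(y)$ then follows by the change of variables from $\omega_0$ to $y=F_0(x)+F(x)\omega_0$, or equivalently just by reading off the density of the identified Gaussian. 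There is no real obstacle here; the only thing to be careful about is checking the positivity of $\gamma$ on $M$, which is what guarantees non-degeneracy of the Gaussian and justifies writing the kernel in density form.
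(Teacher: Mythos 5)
Your proposal is correct and follows essentially the same route as the paper: identify the one-step map as the affine function $F_0(x)+F(x)\omega_0$ of a standard Gaussian $\omega_0$, deduce that the transition law is $\text{MVN}(F_0(x),F(x)F(x)^\intercal)$, compute $\Gamma(x)=F(x)F(x)^\intercal$ and its determinant $(\tau/\Omega)^2x_1(a+x_1)(b+x_1x_2)$, and read off the density. Your explicit factorisation of the determinant and the check that the $(1,1)$-entry is positive (hence $\Gamma(x)$ is positive-definite on $M$) make the non-degeneracy argument slightly more complete than the paper's, but the substance is identical.
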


\begin{proof}
    Let $x\in M$ and $A\in\cB(\bbR^2)$. Let $q_{X_n} \equiv F(X_n)\omega_n$ so that,
    \[  X_{n+1} = F_0(X_n) + F(X_n)\omega_n = F_0(X_n) + q_{X_n} \]
    We use the linear combination property of the multivariate normal distribution to deduce, 
    \[ q_{X_n} \sim MVN(0, \Gamma(X_n)) \]
    where
    \[ \Gamma(x) = F(x)F(x)^\intercal  = \frac{\tau}{\Omega}  \begin{pmatrix}
         a+(1+b)x_1+x_1^2x_2 & -bx_1-x_1^2x_2 \\
         -bx_1-x_1^2x_2 & bx_1+x_1^2x_2
        \end{pmatrix}. \]
    with 
    \[  \det\Gamma(x) = \left(\frac{\tau}{\Omega}\right)^2 x_1(x_1+a)(x_1x_2+b). \]
    Writing $\gamma(x) = x_1(x_1+a)(x_1x_2+b)$ we see that $a,b>0$ implies all the roots of $\gamma$ are non-positive, so $\det\Gamma(x)\neq0$ for any $x\in M \subset \bbR^2_{>0}$. Hence $\Gamma(x)$ is invertible for all $x\in M$.

    Let $\eta_x$ denote the probability measure for the $MVN(0,\Gamma(x))$ distribution then by definition,
    \begin{align*}
        \cP(x,A) &= \int_{\bbR^2} \chi_A(f_0(x) + q_x) \ d\eta_x(q_x) \\
        &= \frac{\Omega}{2\pi\tau\sqrt{\gamma(x)}} \int_{\bbR^2} \chi_A(f_0(x) + q_x) \exp\left[-\frac{q_x^\intercal \Gamma(x)^{-1}q_x}{2}\right] \ d\ell^2(q_x) \\
         &= \frac{\Omega}{2\pi\tau\sqrt{\gamma(x)}} \int_{\bbR^2} \chi_A(y) \exp\left[-\frac{(y-f_0(x))^\intercal \Gamma(x)^{-1}(y-f_0(x))}{2}\right] \ d\ell^2(y) \\
         &=  \frac{\Omega}{2\pi\tau\sqrt{\gamma(x)}} \int_A \exp\left[-\frac{(y-f_0(x))^\intercal \Gamma(x)^{-1}(y-f_0(x))}{2}\right] \ d\ell^2(y).
    \end{align*}
    The third equality comes from setting $y = f_0(x) + q_x$. In this form we may read off $\kappa$.
\end{proof}

\begin{proposition}\label{propPprop}
    Let $\varphi$ be the Brusselator Markov chain. The transition kernel $\cP$ has the following properties:
          \begin{enumerate}[label=(\roman*)]
            \item For all $x\in M$ and every $\varepsilon>0$, there exists a $\delta>0$ such that,
             \[ \norm{x-z} < \delta \ \implies \ \abs{ \kappa(x,y) -   \kappa(z,y) } < \varepsilon. \]
            \item For all $x\in M$ and every open set $A\in\cB(M)$, $\cP(x,A) >0$.
            \end{enumerate}
\end{proposition}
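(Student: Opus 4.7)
The plan is to exploit the explicit Gaussian form of the kernel together with the compactness of $M$. Writing $\kappa(x,y) = C(x)\exp(-Q(x,y))$ with $C(x) = \Omega/(2\pi\tau\sqrt{\gamma(x)})$ and $Q(x,y) = \frac{1}{2}(y-F_0(x))^\intercal \Gamma(x)^{-1}(y-F_0(x))$, the previous lemma already gives $\gamma(x) > 0$ on $M \subset \bbR^2_{>0}$. Since $F_0$, $\gamma$ and the entries of $\Gamma$ are polynomials in $x$, and $\Gamma$ is invertible on $M$, the maps $x \mapsto C(x)$, $x\mapsto F_0(x)$ and $x\mapsto \Gamma(x)^{-1}$ are continuous on $M$. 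This immediately yields joint continuity of $\kappa$ on $M\times\bbR^2$.

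For part (i), the obstacle is that the difference $|\kappa(x,y)-\kappa(z,y)|$ must be controlled for \emph{all} $y\in\bbR^2$ at once, and a crude Taylor/MVT bound on the exponent would grow like $\|y\|^2$. I would circumvent this by a standard tail/core split. First I would produce a uniform Gaussian upper bound: because $M$ is compact, $F_0(M)$ is bounded by some $R>0$, the smallest eigenvalue of $\Gamma(x)^{-1}$ is bounded below by some $\lambda>0$ on $M$, and $C(x)$ is bounded above by some $C_{\max}$; consequently
\[
  \kappa(x,y) \;\leq\; C_{\max}\exp\!\left(-\tfrac{\lambda}{2}(\|y\|-R)^2\right) \quad \text{for all } x\in M,\; \|y\|\geq R.
\]
Given $\varepsilon>0$, choose $R_\varepsilon\geq R$ large enough that this upper bound is $<\varepsilon/2$. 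Then for $\|y\|\geq R_\varepsilon$ and any $x,z\in M$ the triangle inequality gives $|\kappa(x,y)-\kappa(z,y)|<\varepsilon$. On the compact set $M\times\overline{B(0,R_\varepsilon)}$, $\kappa$ is uniformly continuous, so I obtain a $\delta>0$ for which $\|x-z\|<\delta$ forces $|\kappa(x,y)-\kappa(z,y)|<\varepsilon$ for every $\|y\|\leq R_\varepsilon$ as well. The two regimes together yield the claim.

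Part (ii) follows from the strict positivity of the Gaussian density. Since $\gamma(x)>0$ and $\Gamma(x)^{-1}$ is positive definite on $M$, the exponential factor is positive and $C(x)>0$, so $\kappa(x,y)>0$ for all $(x,y)\in M\times\bbR^2$. For a nonempty open set $A\in\cB(M)$ (open in the subspace topology of $M$), the standing assumption that $M$ is a compact subset of $\bbR^2_{>0}$ with nonempty interior (implicit in its use as the quasi-ergodic support) means $A$ has positive two-dimensional Lebesgue measure. Hence $\cP(x,A)=\int_A \kappa(x,y)\,d\ell^2(y)>0$.

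The only delicate point is the uniformity in $y$ in (i); once the uniform Gaussian tail bound is in hand, everything reduces to continuity of finitely many polynomial expressions and uniform continuity on a compact set, and (ii) is then essentially a one-line consequence of positivity. These are precisely the regularity and irreducibility properties the kernel needs for Castro's framework to deliver quasi-ergodicity in Theorem~\ref{quasthm}.
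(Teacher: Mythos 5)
Your proof is correct, and for part (i) it takes a genuinely stronger route than the paper. The paper's own argument first \emph{fixes} $y\in M$ and then invokes uniform continuity of the single function $x\mapsto\kappa(x,y)$ on the compact set $M$; the resulting $\delta$ therefore depends on $\varepsilon$ and (a priori) on $y$, and no uniformity in $y$ is claimed or proved. You instead establish the estimate uniformly over all $y\in\bbR^2$ via the tail/core split: the uniform Gaussian envelope $\kappa(x,y)\leq C_{\max}\exp\bigl(-\tfrac{\lambda}{2}(\norm{y}-R)^2\bigr)$ (valid since $\gamma$ is bounded away from zero and $\Gamma(x)^{-1}$ has smallest eigenvalue bounded below on the compact $M$) kills the far field, and joint uniform continuity of $\kappa$ on the compact product $M\times\overline{B(0,R_\varepsilon)}$ handles the core. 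This is more work, but it buys a $\delta$ independent of $y$, which is the form of kernel regularity one actually wants when feeding the chain into Castro's Hypothesis (H) or when arguing total-variation continuity of $x\mapsto\cP(x,\cdot)$; the paper's version only gives pointwise-in-$y$ equicontinuity. For part (ii) the two arguments are essentially identical (strict positivity of the Gaussian density plus positivity of the Lebesgue measure of a nonempty open subset), the only cosmetic difference being that the paper bounds $\kappa$ below by a constant $K$ on a small ball inside $A$ while you appeal directly to $\ell^2(A)>0$; both implicitly require $A$ to be nonempty and to contain a set of positive planar Lebesgue measure, which you are right to flag as resting on $M$ having nonempty interior.
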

\begin{proof}
    For (i), let $\varepsilon>0$. Fixing $y\in M$, note that $\kappa(\cdot,y):M\to\bbR$,
    \[ \kappa(x,y) = \frac{\Omega}{2\pi\tau\sqrt{\gamma(x)}}  \exp\left[-\frac{(y-F_0(x))^\intercal  \Gamma(x)^{-1}(y-F_0(x))}{2}\right]. \] 
    is a continuous function on a compact space and so is uniformly continuous on $M$. Hence, the implication holds.
    
    For (ii), let $x\in M$, $A\in\cB(M)$ be open and $y\in A$. Since $A$ is open, there exists a $\delta>0$ such that, $B(y,\delta)\subset A$ where $B(y,\delta)$ denotes the open ball of radius $\delta$ centered at $y$. Also since $\kappa(x,y)>0$ for all $x,y\in M$ then there exists a $K>0$ such that $\kappa(x,y)>K$ for all $x\in B(y,\delta)$, then
        \[ \cP(x,A) = \int_A \kappa(x,y) \ d\ell^2(y) \geq \int_{B(y_0,\delta)} K \ d\ell^2(y)= K\ell^2(B(y_0,\delta)) > 0. \] 
\end{proof}

As we wish to investigate the asymptotic dynamics of orbits that remain inside $M$ and do not become absorbed, it is natural to seek the existence of stationary measures in the classical sense. However, due to the loss of mass by absorption at the boundary, this is impossible in our case. Hence we seek what is called a quasi-stationary measure which preserves mass along the process conditioned on survival.

In order to define a quasi-stationary measure, we introduce the notation, 

\begin{notation}
    Let $(\bbP_x)_{x\in \bbR^2}$ be the family of probability measures defined in Corollary \ref{Markovcor}. If $x\in M$ and $m$ is a measure on $M$. We write
    \[ \bbP_m(\cdot) := \int_M \bbP_x(\cdot) \ dm(x). \]
\end{notation}

\begin{definition}[Quasi-stationary measure]
    Let $n\in\bbN, A\in\cB(M)$. A measure $\mu$ on $\cB(M)$ is called a \textit{quasi-stationary measure} for $\varphi$ if
    \[ \bbP_\mu( \varphi_n \in A \ | \ \tn > n ) := \frac{\int_M \bbP_x( \varphi_n\in A) \ d\mu(x)}{\int_M \bbP_x( \varphi_n\in M) \ d\mu(x)} = \mu(A) \]
    where $\tn$ is the stopping time \eqref{stoptime} for the RDS $(\theta,\varphi)$.
\end{definition}

One can think of this measure as an initial distribution of the chain $\varphi$ such that it can evolve for a very long time without being absorbed. Before we can proceed with proving the existence of such a measure for the Brusselator, we expand our framework. Let,
    \[ L^\infty (M) := \{  f:M\to\bbR \ | \ f \text{ is Borel measurable and } ||f||_\infty < \infty \} \]
    where for measurable $f:M\to\bbR$
    \[ ||f||_\infty := \inf \{ C \geq 0 \ | \ |f(x)|\leq C \text{ for almost every } x\in M\}  \]
denotes the \textit{supremum norm}. Furthermore, let
    \[ \cC^0 (M) := \{  f \in L^\infty(M) \ | \ f \text{ is continuous} \}. \]
We state a key theorem in order to use the properties developed in the previous section in the context of measures. For a proof, see \cite[Theorem 6.19]{rudin86real}

\begin{theorem}[Riesz-Markov-Kakutani representation]\label{Rieszthm}
    The topological dual space $(\cC^0(M), \norm{\cdot}_\infty)^* = (\cM(M),\norm{\cdot}_{TV})$ where
    \[ \cM(M) := \{ m: \cB(M) \to \bbR \ | \ m \text{ is a signed measure on } \cB(M) \} \]
    and for all $m\in\cM(M)$
    \[ \norm{m}_{TV} := \abs{m}(M). \]
    In particular, we may identify any $m\in\cM(M)$ with an element of $\cC^0(M)^*$ by
    \[ m(f) = \int_M f(x) \ dm(x )\]
    for all $f\in\cC^0(M).$
\end{theorem}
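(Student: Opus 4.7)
Since $M$ is compact (the dynamics of interest are restricted to a compact subset of the positive quadrant), the plan is to prove the classical Riesz-Markov-Kakutani representation for signed Borel measures on a compact metric space. The argument splits naturally into three pieces: the embedding of $\cM(M)$ into $(\cC^0(M))^*$, equality of the two norms, and surjectivity of that embedding.

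For the embedding, given $m \in \cM(M)$, define $\Lambda_m(f) := \int_M f \, dm$. Linearity is immediate and the estimate $\abs{\Lambda_m(f)} \leq \norm{f}_\infty \abs{m}(M)$ gives $\norm{\Lambda_m}_{(\cC^0(M))^*} \leq \norm{m}_{TV}$. For the reverse inequality I would use the Hahn decomposition $M = P \sqcup N$ of $m$, so that $\abs{m} = m|_P - m|_N$. Lusin's theorem applied to $g := \chi_P - \chi_N$ produces, for each $\varepsilon > 0$, a function $f \in \cC^0(M)$ with $\norm{f}_\infty \leq 1$ and $\abs{m}(\{f \neq g\}) < \varepsilon$; then $\Lambda_m(f) \geq \norm{m}_{TV} - 2\varepsilon$, giving $\norm{\Lambda_m} \geq \norm{m}_{TV}$. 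Uniqueness of the measure representing a given functional follows because continuous functions separate finite Borel measures on a compact metric space.

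The main step is surjectivity. Given $\Lambda \in (\cC^0(M))^*$, I would first produce a lattice-theoretic Jordan decomposition $\Lambda = \Lambda_+ - \Lambda_-$ into positive linear functionals by setting $\Lambda_+(f) := \sup\{\Lambda(g) : 0 \leq g \leq f, \ g \in \cC^0(M)\}$ for $f \geq 0$ and extending by linearity. I would then apply the positive Riesz-Markov theorem to each $\Lambda_\pm$: for open $V \subset M$, set $\mu^*(V) := \sup\{\Lambda_\pm(f) : 0 \leq f \leq \chi_V, \ f \in \cC^0(M)\}$ and extend by $\mu^*(E) := \inf\{\mu^*(V) : E \subset V, \ V \text{ open}\}$. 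Carath\'eodory's criterion then yields a finite Borel measure $\mu_\pm$; approximating each $f \in \cC^0(M)$ uniformly by simple functions via its level sets, together with Urysohn-type bump functions subordinate to a finite open cover of $M$, shows $\Lambda_\pm(f) = \int_M f \, d\mu_\pm$. Setting $m := \mu_+ - \mu_-$ finishes the surjectivity, and combining the norm estimates on the positive and negative parts also recovers $\norm{m}_{TV} = \norm{\Lambda}$.

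The principal obstacle is verifying that the formula defining $\Lambda_+$ really is additive on the positive cone of $\cC^0(M)$, so that it extends to a bona fide positive linear functional; this is where the vector-lattice (Riesz space) structure of $\cC^0(M)$ is essential and where most classical presentations do the bulk of the work. Given the length of this entirely standard argument, the efficient course is the one the authors take, namely citing Rudin rather than reproducing the proof in full.
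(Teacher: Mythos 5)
The paper gives no proof of this statement---it is quoted as a classical result with a pointer to Rudin's Theorem 6.19---and your outline is exactly the standard argument found there: the integration embedding with the Hahn--Lusin argument for the isometry, the lattice-theoretic Jordan decomposition $\Lambda = \Lambda_+ - \Lambda_-$, and the positive Riesz construction via outer measures on open sets. Your sketch is correct in its essentials (the only point worth flagging is that identifying the dual with \emph{all} signed Borel measures, rather than only the regular ones, is licensed because every finite Borel measure on a compact metric space is automatically regular), and your concluding judgment that a citation is the appropriate course here is precisely what the authors do.
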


Using the same notation, we define
\[ \cM_+(M) := \{ m \in \cM(M) \ | \ m(A)\geq 0 \text{ for all } A\in\cB(M) \}. \]

We now introduce a new operator on $\cM(M)$.

\begin{definition}
    Let $n\in\bbN$. The operator $\cL^n:\cM(M)\to\cM(M)$ is the linear operator such that for all $m\in\cM(M)$,
    \[ \cL^n m(A) := \int_M \cP^n(x,A) \ dm(x), \quad \text{and} \quad \cL^n m(f) := m(\cP^n f) = \int_M \cP^n f(x) \ dm(x) \]
    for all $A\in\cB(M)$ and $f\in\cC^0(M).$ Moreover we write $\cL:=\cL^1$.
\end{definition}

Note that $\cL$ is precisely the dual operator of $\cP$ when viewed as an operator on $\cC^0(M)^*$. We exploit the fact $\cL=\cP^*$ in the next results. 

\begin{theorem}\label{quasstatthm}
    Let $\varphi$ be the Brusselator Markov chain. Then,
    \begin{enumerate}[label=(\roman*)]
    \item The stochastic Koopman operator,
    \[ \cP:(\cC^0 (M),\norm{\cdot}_\infty)\to(\cC^0 (M),\norm{\cdot}_\infty), \quad f\mapsto \int_M f(y) \ \cP(\cdot,dy), \]
    is a compact linear operator with spectral radius $r(\cP)=\lambda$.

    \item $\lambda$ is an eigenvalue of $\cP$ with eigenfunction $f_0\in\cC^0(M)\setminus\{0\}$ where $f_0>0$.

    \item $\lambda$ is an eigenvalue of $\cL$ with eigenmeasure $\mu\in\cM(M)\setminus\{0\}$.

    \item $\mu$ is a unique quasi-stationary measure $\mu$ for the chain $\varphi$ with $\supp\mu = M$. 
    \end{enumerate}
\end{theorem}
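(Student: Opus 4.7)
The plan is to obtain all four parts from a single application of the Krein--Rutman theorem to the Koopman operator $\cP$ on the Banach lattice $(\cC^0(M), \norm{\cdot}_\infty)$, together with a dual Krein--Rutman statement for $\cL = \cP^*$. The compactness of $M$ together with the smoothness of the Gaussian kernel $\kappa$ will do most of the work, and Proposition \ref{propPprop} provides exactly the strong positivity needed to conclude uniqueness and full support.

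\textbf{Step 1 (compactness).} I would first show that $\cP$ maps $\cC^0(M)$ into itself and is compact. Continuity of $\cP f$ for $f\in\cC^0(M)$ follows from Proposition \ref{propPprop}(i): since $\kappa$ is uniformly continuous on the compact set $M\times M$, dominated convergence gives $\cP f(x)-\cP f(z) = \int_M f(y)(\kappa(x,y)-\kappa(z,y))\,d\ell^2(y)\to 0$ as $z\to x$. Compactness then follows from Arzel\`a--Ascoli: the image of the unit ball of $\cC^0(M)$ is uniformly bounded (since $\kappa$ is bounded on $M\times M$) and equicontinuous (again by uniform continuity of $\kappa$, with bound independent of $f$ with $\norm{f}_\infty\leq 1$). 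Thus $\cP$ is a compact linear operator, and so has a spectral radius $\lambda := r(\cP)\geq 0$, with every nonzero spectral value being an eigenvalue of finite multiplicity.

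\textbf{Step 2 (Krein--Rutman for $\cP$).} The cone $\cC^0_+(M)$ of non-negative continuous functions has nonempty interior (the strictly positive functions), and Proposition \ref{propPprop}(ii) gives that for any $f\in\cC^0_+(M)\setminus\{0\}$ and any $x\in M$, $\cP f(x)=\int_M f(y)\kappa(x,y)\,d\ell^2(y)>0$, so $\cP$ sends $\cC^0_+(M)\setminus\{0\}$ into the interior of the cone. Hence $\cP$ is a compact, strongly positive operator; the Krein--Rutman theorem then yields that $\lambda=r(\cP)>0$, that $\lambda$ is a simple eigenvalue of $\cP$, and that it admits an eigenfunction $f_0\in\cC^0(M)$ with $f_0>0$ on $M$; this gives (i) and (ii). Applying the same theorem to the dual operator $\cL=\cP^*$ acting on $(\cM(M),\norm{\cdot}_{TV})$ via Theorem \ref{Rieszthm}, and observing that $\sigma(\cP)=\sigma(\cP^*)$ for compact operators, we obtain a nonzero measure $\mu\in\cM_+(M)$ with $\cL\mu=\lambda\mu$, proving (iii). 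Uniqueness (up to scale) of both $f_0$ and $\mu$ is part of the Krein--Rutman conclusion under strong positivity.

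\textbf{Step 3 (quasi-stationarity, uniqueness, support).} The identity $\cL\mu = \lambda\mu$ rewritten is $\int_M \cP(x,A)\,d\mu(x)=\lambda\mu(A)$ for every $A\in\cB(M)$. Iterating, $\int_M \cP^n(x,A)\,d\mu(x)=\lambda^n\mu(A)$. Taking $A=M$ gives $\bbP_\mu(\varphi_n\in M)=\bbP_\mu(\tn>n)=\lambda^n\mu(M)$, hence after normalising $\mu$ to a probability measure,
\[
\bbP_\mu(\varphi_n\in A\mid\tn>n) = \frac{\lambda^n\mu(A)}{\lambda^n\mu(M)} = \mu(A),
\]
so $\mu$ is a quasi-stationary measure. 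For uniqueness among quasi-stationary measures, any such $\nu$ must satisfy $\cL\nu = \lambda(\nu)\nu$ for some $\lambda(\nu)>0$, hence is an eigenmeasure of $\cL$; strong positivity of $\cP$ implies the dominant eigenvalue of $\cL$ is simple (Krein--Rutman again) and is attained only at $\lambda$, so $\nu$ and $\mu$ are proportional. Finally, $\supp\mu=M$: if some open $U\subset M$ had $\mu(U)=0$, then by Proposition \ref{propPprop}(ii) and $\mu=\lambda^{-1}\cL\mu$ we would have $\mu(U)=\lambda^{-1}\int_M\cP(x,U)\,d\mu(x)>0$ since $\cP(x,U)>0$ for every $x\in M$, a contradiction.

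\textbf{Main obstacle.} The delicate step is Step 2: verifying the hypotheses for Krein--Rutman carefully enough to get both simplicity of the leading eigenvalue and strict positivity of $f_0$ (needed downstream for the support argument), and then pushing these through to the dual to extract the eigenmeasure $\mu$. Compactness of $\cP$ and the strong positivity supplied by Proposition \ref{propPprop} are the key ingredients; once those are in hand, parts (i)--(iv) follow in sequence as described.
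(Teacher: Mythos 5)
Your proposal is correct in substance, but it takes a genuinely different route from the paper: the paper's entire proof is to verify that Proposition \ref{propPprop} gives Hypothesis (H) of Castro et al.\ (with $Z=\emptyset$) and then cite their Theorems A and B, whereas you reprove the relevant special case directly via Arzel\`a--Ascoli and the Krein--Rutman theorem. Your Step 1 is sound (uniform boundedness from $\cP(x,M)\leq 1$ and equicontinuity from the uniform continuity of $\kappa$ on the compact set $M\times M$), and Step 2 correctly identifies that $\cC^0_+(M)$ is a solid cone and that strict positivity of $\kappa$ makes $\cP$ strongly positive, so the strong Krein--Rutman theorem delivers (i)--(iii); the dual eigenmeasure is recovered via Riesz--Markov exactly as you say. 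Your Step 3 derivation of quasi-stationarity from $\cL\mu=\lambda\mu$, the uniqueness argument (any QSM is a positive eigenmeasure of $\cL$, and pairing with the strictly positive $f_0$ forces its eigenvalue to be $\lambda$, whence simplicity gives proportionality), and the support argument are all the standard and correct arguments. Two implicit assumptions you share with the paper are worth flagging: that every nonempty relatively open subset of $M$ has positive Lebesgue measure (needed for strong positivity, and already implicit in Proposition \ref{propPprop}(ii)), and that the $\delta$ in Proposition \ref{propPprop}(i) is uniform in $y$ (true, since the paper's proof really establishes joint uniform continuity of $\kappa$ on $M\times M$). What the paper's citation-based route buys, beyond brevity, is the additional spectral structure (the gap between $\lambda$ and the rest of the spectrum, and the decomposition $\delta_x=f_0(x)\mu+\nu_x$) that is exploited later in Theorem \ref{quasergthm} and Lemma \ref{quasstatlem}; your self-contained argument would need a further Krein--Rutman refinement (that the peripheral spectrum reduces to $\{\lambda\}$) to recover those downstream facts, but for Theorem \ref{quasstatthm} itself your proof is complete.
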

\begin{proof}
    Under Proposition \ref{propPprop}, $\varphi$ satisfies Hypothesis (H) of \cite{castro2024existence} in the case $Z=\emptyset$. The results follow from \cite[Theorem A and Theorem B]{castro2024existence}.
\end{proof}

Given a quasi-stationary measure, one can derive the existence of a quasi-ergodic measure where expectations of time averages conditioned on survival equal the space average with respect to this measure. We make the following formal definition.

\begin{definition}[Quasi-ergodic measure]\label{quasergdef}
    Let $n\in\bbN$ and $h\in L^\infty(M)$. A measure $\nu$ on $\cB(M)$ is called a \textit{quasi-ergodic measure} for $\varphi$ if
    \[ \lim_{n\to\infty} \bbE_x \left[ \frac{1}{n}\sum_{i=0}^{n-1} h\circ\varphi_i \ \middle| \ \tn>n \right] = \int_M h(y) \ d\nu(y)  \]
    holds for $\nu$-almost every $x\in M$. Again $\tn$ is the stopping time \eqref{stoptime} for $\varphi_n$.
\end{definition}

If a Markov chain admits a quasi-ergodic measure then we say it exhibits quasi-ergodicity. One can think of quasi-ergodicity as an almost local version of ergodicity where paths conditioned on staying within the desired region almost surely visit all of that desired region. Hence if one can describe a process which only talks about paths which stay within the desired region, the quasi-ergodic measure gifts ergodicity to such a process opening a range of powerful theorems. Before we proceed in this fashion for the Brusselator chain $\varphi$, we must first prove it exhibits quasi-ergodicty.

\begin{theorem}\label{quasergthm}
    Let $\varphi$ be the Brusselator Markov chain. Then,
    \begin{enumerate}[label=(\roman*)]
    \item There exists a quasi-ergodic measure $\nu$ for $\varphi$ such that, 
        \[ \nu(A) = \frac{\int_A f_0(y) \ d\mu(y)}{\int_M f_0(y) \ d\mu(y)}  \]
    for every $A\in\cB(M)$ and $\supp \nu = M$.

    \item Given $m$, a Borel measure on M such that $\int_M f_0(y) \ dm(y)>0$ and $\int_M \cP^n(x,M) \ dm(x)>0$ for every $n\in\bbN$, there exists $K(m)>0$ and $\beta>0$, such that
    \[ \norm{\bbP_m(\varphi_n\in\cdot \ | \ \tn>n) -\mu}_{TV} \leq K(m)e^{-\beta n}, \]
    for all $n\in\bbN$.
\end{enumerate}    
\end{theorem}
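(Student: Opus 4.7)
The plan is to leverage the machinery of \cite{castro2024existence} in essentially the same way as in the proof of Theorem \ref{quasstatthm}. Since Proposition \ref{propPprop} has already verified Hypothesis (H) of \cite{castro2024existence} with $Z=\emptyset$, the corresponding quasi-ergodic result there can be invoked directly. The only thing to do is to show that the construction the theorem asserts, namely $d\nu = f_0/\mu(f_0)\, d\mu$, produces the correct object and satisfies the required convergence properties.

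For part (i), I would first check that $\nu$ is a well-defined Borel probability measure: compactness of $M$, continuity and strict positivity of $f_0$ (from Theorem \ref{quasstatthm}(ii)) give $0<\mu(f_0)<\infty$, and $\supp\nu=M$ follows directly from $\supp\mu = M$ and $f_0>0$. To verify the quasi-ergodic identity, the standard computation is to write, for $h\in L^\infty(M)$,
\[
\bbE_x\!\left[\tfrac{1}{n}\sum_{i=0}^{n-1} h\circ\varphi_i \ \middle|\ \tn>n\right] = \frac{1}{n\,\cP^n(x,M)}\sum_{i=0}^{n-1}\int_M \cP^i(x,dy)\, h(y)\, \cP^{n-i}(y,M),
\]
via the Markov property and the identity $\{\tn>n\}=\{\varphi_0,\dots,\varphi_n\in M\}$. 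The compactness of $\cP$ together with the Krein--Rutman-type conclusion of Theorem \ref{quasstatthm} provides a spectral gap: there exist constants and some $\rho<\lambda$ so that $\lambda^{-n}\cP^n f$ converges to $\mu(f)\,f_0/\mu(f_0)$ uniformly at rate $\cO((\rho/\lambda)^n)$ for every $f\in\cC^0(M)$. Substituting these expansions for both $\cP^i(x,\cdot)$ (applied to $h\cdot \cP^{n-i}(\cdot,M)/\lambda^{n-i}$) and $\cP^n(x,M)$, the $\lambda^n f_0(x)$ factors cancel in numerator and denominator, and the Cesàro average collapses to $\mu(hf_0)/\mu(f_0) = \nu(h)$.

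For part (ii), the same spectral expansion handles the total variation bound. Writing
\[
\bbP_m(\varphi_n\in\cdot\mid\tn>n) = \frac{\cL^n m(\cdot\cap M)}{\cL^n m(M)},
\]
the compactness of $\cP$ (equivalently of $\cL$) yields $\cL^n m = \lambda^n\, m(f_0)\,\mu/\mu(f_0) + R_n$ with $\|R_n\|_{TV}\leq C(m)\rho^n$. The hypotheses $m(f_0)>0$ and $\int_M \cP^n(x,M)\,dm(x)>0$ ensure the denominator stays bounded away from zero (indeed, of order $\lambda^n m(f_0)$), so dividing gives the exponential bound with rate $\beta = \log(\lambda/\rho)$ and constant $K(m) = C(m)\mu(f_0)/m(f_0)$ up to a multiplicative factor.

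The main obstacle is really only notational: one must be careful that the spectral-gap estimate in Castro's setting is uniform in the test function and survives the double use (once in the numerator, once in the denominator), and that the Cesàro averaging in part (i) does not spoil the exponential rate — it does not, because each term in the sum individually converges to $\nu(h)$ at a geometric rate, so the average does too. Since all of the analytic content (compactness of $\cP$, simplicity of $\lambda$, strict positivity of $f_0$, full support of $\mu$) is already in place, I expect the proof to reduce to citing \cite[Theorem C]{castro2024existence} (or the analogous quasi-ergodic statement) applied to the framework established by Theorem \ref{quasstatthm}.
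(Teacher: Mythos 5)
Your proposal is correct and matches the paper's proof, which likewise verifies Hypothesis (H) via Proposition \ref{propPprop}, cites \cite[Theorem C, (M1)]{castro2024existence} for everything except the support claim, and deduces $\supp\nu = M$ exactly as you do from $\supp\mu = M$ and the strict positivity of $f_0$. The additional spectral-gap sketch you give is a reasonable account of what lies inside the cited theorem, but it is not needed for the paper's argument.
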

\begin{proof}
    All results except $\supp \nu = M$ follow from \cite[Theorem C, (M1)]{castro2024existence}. Both $\mu$ and $f_0$ are strictly positive by $\supp\mu=M$ and Theorem \ref{quasstatthm} respectively. Hence, by the explicit form of $\nu$ above, $\nu$ is strictly positive and so $\supp \nu = M$.
\end{proof}

As eluded to before, we seek to study paths corresponding to initial conditions in the set $\Xi\in\cF\otimes\cB(M)$, defined as
\begin{equation}\label{eq:xiset}
    \Xi:= \{ (\omega,x)\in\Sigma\times M \ | \ \tn(\omega,x) = \infty \} = \bigcap_{n\in\bbN} \{ (\omega,x)\in\Sigma\times M \ | \ \tn(\omega,x) > n \}.
\end{equation}

Fortunately, the following theorem allows us to study a process consisting only of such paths. First, we begin by proving a couple properties of the process under quasi-stationarity. For a more comprehensive study of the consequences of quasi-stationarity see the work of Villemonais \cite{M_l_ard_2012,champagnat:hal-00973509}.

\begin{lemma}\label{quasstatlem}
    Let $\varphi$ be the Brusselator Markov chain. Then,
    \begin{enumerate}[label=(\roman*)]
    \item There exists a family of functions $\{\nu_x\}_{x\in M}$ such that
    \[ \sup_{x\in M}\norm{\nu_x}_{TV} \leq 1 + \norm{f_0}_\infty \norm{\mu}_{TV} < \infty \quad \text{and} \quad  \sup_{x\in M}\norm{\cL^n\nu_x}_{TV} = o(\lambda^n), \]
    and for all $x\in M$, the Dirac mass $\delta_x$ at $x$ may be decomposed as,
    \[ \delta_x = f_0(x)\mu + v_x \]

    \item There exists a constant $\gamma>0$ such that,
    \begin{equation}
        \lim_{n\to\infty}\sup_{x\in M}\abs{e^{\gamma n}\cP^n(x,M)-f_0(x)} = 0
    \end{equation}
    \end{enumerate}
\end{lemma}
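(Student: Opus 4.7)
The plan is to read (ii) as a direct consequence of (i) and to obtain (i) from the spectral gap that is implicit in Theorem \ref{quasstatthm}. After normalising $f_0$ so that $\mu(f_0)=1$ (possible since $f_0\in\cC^0(M)$ is strictly positive on the compact $M$ and $\mu$ is a probability measure), I would simply define
\[ \nu_x := \delta_x - f_0(x)\mu. \]
The TV bound is then immediate from the triangle inequality, using $\|\delta_x\|_{TV}=1$ and $|f_0(x)|\leq \|f_0\|_\infty <\infty$, uniformly in $x$.

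The real work is the decay $\sup_{x\in M}\|\cL^n\nu_x\|_{TV}=o(\lambda^n)$. My plan is to invoke the compactness of the stochastic Koopman operator $\cP$ (Theorem \ref{quasstatthm}(i)) together with the simplicity of its leading eigenvalue $\lambda$ afforded by the Krein--Rutman theorem applied to the strictly positive kernel $\kappa$ in Proposition \ref{propPprop}. Riesz--Schauder theory then provides a spectral gap: $\cP=\lambda P + R$ on $\cC^0(M)$, with $P$ the rank-one projection $Pf=\mu(f)f_0$ onto $\mathrm{span}(f_0)$, satisfying $PR=RP=0$ and $r(R)=\rho<\lambda$. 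Passing to the dual gives $\cL=\lambda P^* + R^*$ with $P^*m=m(f_0)\mu$, so in particular $P^*\delta_x=f_0(x)\mu$ and therefore $\nu_x=(I-P^*)\delta_x$. Consequently $\cL^n\nu_x=(R^*)^n\delta_x$, and the Gelfand formula yields $\|\cL^n\nu_x\|_{TV}\leq \|(R^*)^n\|_{\mathrm{op}}\leq C(\rho+\varepsilon)^n$ for every small $\varepsilon>0$, uniformly in $x\in M$. Choosing $\varepsilon<\lambda-\rho$ gives the desired $o(\lambda^n)$ rate.

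Part (ii) then follows with essentially no extra work. Writing $\cP^n(x,M)=(\cL^n\delta_x)(M)$ and applying the decomposition from (i) together with $\cL^n\mu=\lambda^n\mu$, one obtains
\[ \cP^n(x,M) = \lambda^n f_0(x)\mu(M) + (\cL^n\nu_x)(M) = \lambda^n f_0(x) + (\cL^n\nu_x)(M), \]
since $\mu$ is a probability measure on $M$. Setting $\gamma:=-\log\lambda>0$, the triangle inequality and part (i) give
\[ \sup_{x\in M}\bigl|e^{\gamma n}\cP^n(x,M)-f_0(x)\bigr| \leq \lambda^{-n}\sup_{x\in M}\|\cL^n\nu_x\|_{TV} = o(1), \]
which is exactly the claim.

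The main obstacle is producing the spectral gap and the simplicity of $\lambda$ rigorously in the present setting; this is precisely what underlies Castro's exponential convergence in Theorem \ref{quasergthm}(ii), so the cleanest route is to cite \cite{castro2024existence} rather than reproduce the Krein--Rutman argument. A secondary point to check is that $\lambda<1$ so that $\gamma>0$: this follows from the fact that absorption carries strictly positive $\cP$-mass outside $M$ from some open subset (via the Gaussian tails of $\kappa(x,\cdot)$ in Proposition \ref{propPprop}), forcing the dominant eigenvalue of the substochastic operator $\cP$ on $\cC^0(M)$ to lie strictly below one. Finally, one must state the normalisation $\mu(f_0)=1$ explicitly so that the decomposition $\delta_x=f_0(x)\mu+\nu_x$ coincides with the spectral projection identity $\delta_x=P^*\delta_x+(I-P^*)\delta_x$.
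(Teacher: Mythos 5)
Your proposal is correct and follows essentially the same route as the paper: the paper obtains the decomposition $\delta_x = f_0(x)\mu + \nu_x$ with $\sup_x\norm{\cL^n\nu_x}_{TV}=o(\lambda^n)$ by citing \cite[Proposition 6.5]{castro2024existence} (whose content is exactly the Krein--Rutman/Riesz--Schauder spectral gap you sketch), and then proves (ii) by the same division by $\lambda^n$, the only cosmetic difference being that the paper identifies $\gamma$ via the multiplicativity of $n\mapsto\bbP_\mu(\tn>n)=\lambda^n$ rather than setting $\gamma=-\log\lambda$ outright. Your explicit check that $\lambda<1$ (via the Gaussian kernel leaking mass out of the compact set $M$) and the normalisation $\mu(f_0)=1$ are points the paper leaves implicit, so no correction is needed.
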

\begin{proof}
    For (i), see \cite[Proposition 6.5]{castro2024existence}. For (ii), the Markov property of $\varphi$ implies that for all $n,m\in\bbN$,
    \begin{align*}
        \bbP_\mu(\tn>n+m) &= \int_M \bbP_x(\tn>n+m) \ d\mu(x) = \int_M \cP^{n+m}(x,M) \ d\mu(x) \\
        &= \int_M \cP^n(\cP^m(x,M)) \ d\mu(x) = \cL^n \mu(\cP^m\chi_M) = \lambda^n\mu(\cP^m\chi_M) = \lambda^n\mu(M)\cL^m\mu(M) \\
        &= \int_M \cP^n(x,M) \ d\mu(x)\int_M \cP^m(x,M) \ d\mu(x) = \bbP_\mu(\tn>n)\bbP_\mu(\tn>m).
    \end{align*}
    Therefore there exists $\gamma>0$ such that $\bbP_\mu(\tn>n) = e^{-\gamma n}$. Now let $x\in M$. 
    From (iv), $\sup_{x\in M}\norm{\cL^n\nu_x}_{TV} = o(\lambda^n)$ so $\cL^n\nu_x(M) = o(\lambda^n)$. Note that,
    \[ \cP^n(x,M) = \cL^n\delta_x(M) = \cL^n(f_0(x)\mu(M) + \nu_x(M)) = \lambda^nf_0(x)\mu(M) + \cL^n\nu_x(M) = \lambda^nf_0(x) + o(\lambda^n). \] 
    Hence,
    \begin{align*}
        \abs{e^{\gamma n}\cP^n(x,M)-f_0(x)} &= \abs{\frac{\cP^n(x,M)}{\bbP_\mu(\tn>n)}-f_0(x)} = \abs{\frac{\cP^n(x,M)}{\int_M\bbP_x(\tn>n) \ d\mu(x)}-f_0(x)} \\
        &= \abs{\frac{\cP^n(x,M)}{\int_M\cP^n(x,M) \ d\mu(x)}-f_0(x)} = \abs{\frac{\cP^n(x,M)}{\lambda^n}-f_0(x)} \to 0.
    \end{align*}
    as $n\to\infty$. Therefore,
    \[ \lim_{n\to\infty}\sup_{x\in M}\abs{e^{\gamma n}\cP^n(x,M)-f_0(x)} = 0. \]
\end{proof}

We now prove said theorem.

\begin{theorem}\label{Qprocthm}
     Let $\varphi$ be the Brusselator Markov chain. Then,
     \begin{enumerate}[label=(\roman*)]
         \item There exists a family of probability measures $(\bbQ_x)_{x\in M}$ on $(\Sigma\times M, \cF\otimes\cB(M))$ such that for each $x\in M$ and every $m\in\bbN$ and $A_m\in\cF\otimes\cB(M)$,
        \[ \bbQ_x(A) := \frac{e^{\gamma m}}{f_0(x)}\int_A f_0\circ\varphi_m \ d\bbP_x =\lim_{n\to\infty}\bbP_x(A \ | \ \tn>n). \]
        \item The tuple
        \[ \left(\Sigma\times M, (\cF_n\otimes\cB(M))_{n\in\bbNz},(\varphi_n)_{n\in\bbNz},(\cQ^n)_{n\in\bbN},(\bbQ_x)_{x\in M} \right) \]
        is a Markov chain on the state space $(M,\cB(M))$, where the transition kernels $(\cQ^n)_{n\in\bbN}$ satisfy
        \[ \cQ^n(x,A) := \bbQ_x(\varphi_n\in A) = \frac{e^{\gamma n}}{f_0(x)}\int_A f_0(y) \ \cP^n(x,dy) \]
        for all $n\in\bbN, \ x\in M$ and $A\in\cB(M)$. We call this Markov chain a Q-process.
    
        \item The law of the process $(\bbQ_x)_{x\in M}$ gives full measure to orbits of $(\varphi_n)_{n\in\bbN}$ which never get absorbed. In particular, $\bbQ_x(\Xi) = 1$ for all $x\in M$ where $\Xi\in\cF\otimes\cB(M)$ is defined in \eqref{eq:xiset}
        while $\bbP_x(\Xi)=0$ for every $x\in M$.
    
        \item The quasi-ergodic measure $\nu$ is the unique stationary measure for this process with,
        \[ \lim_{n\to\infty}\norm{\cQ^n(x,\cdot)-\nu}_{TV} = 0 \]
        for all $x\in M$.
    \end{enumerate}  
\end{theorem}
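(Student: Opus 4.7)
The plan is to recognise $\bbQ_x$ as Doob's $h$-transform of the absorbed chain $\varphi$ using the $\lambda$-harmonic function $f_0$, exploiting the two eigen-relations $\cP f_0=\lambda f_0$ and $\cL\mu = \lambda\mu$ established in Theorem \ref{quasstatthm}, together with the normalisation $\lambda = e^{-\gamma}$. This last identity comes for free from Lemma \ref{quasstatlem}: normalising $\mu$ to be a probability measure, the calculation $\bbP_\mu(\tn>n)=\cL^n\mu(M)=\lambda^n$ in the proof of that lemma matches $\bbP_\mu(\tn>n)=e^{-\gamma n}$, forcing $e^{\gamma}\lambda=1$.

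For (i), I would first define the candidate $\bbQ_x$ on each finite-dimensional cylinder $A_m\in\cF_m\otimes\cB(M)$ by the displayed formula and verify consistency in $m$. The Markov property together with $\cP f_0 = \lambda f_0$ gives $\bbE_x[\mathbf{1}_{A_m}f_0\circ\varphi_{m+1}]=\lambda\bbE_x[\mathbf{1}_{A_m}f_0\circ\varphi_m]$, so the prefactor identity $e^{\gamma(m+1)}\lambda=e^{\gamma m}$ shows the two definitions agree; the Kolmogorov extension theorem then produces $\bbQ_x$ on $\cF\otimes\cB(M)$, and $\bbE_x[f_0(\varphi_m)]=\cP^m f_0(x)=\lambda^m f_0(x)$ confirms $\bbQ_x(\Sigma\times M)=1$. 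For the limit characterisation I would use the Markov property to write, for $n\geq m$,
\[ \bbP_x(A_m\mid \tn>n)=\frac{\bbE_x[\mathbf{1}_{A_m}\mathbf{1}_{\{\tn>m\}}\cP^{n-m}(\varphi_m,M)]}{\cP^n(x,M)}, \]
then multiply numerator and denominator by $e^{\gamma n}$ and apply Lemma \ref{quasstatlem}(ii) (with dominated convergence) to substitute $e^{\gamma(n-m)}\cP^{n-m}(\varphi_m,M)\to f_0(\varphi_m)$ uniformly and $e^{\gamma n}\cP^n(x,M)\to f_0(x)$, recovering $\bbQ_x(A_m)$; a $\pi$-$\lambda$ argument extends to the full $\sigma$-algebra. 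Part (ii) then follows by reading off $\cQ^n(x,A)=\bbQ_x(\varphi_n\in A)$ from the definition, with the Chapman-Kolmogorov identity being the textbook $h$-transform computation $\int_M\cQ^n(y,A)\cQ^m(x,dy)=\tfrac{e^{\gamma(n+m)}}{f_0(x)}\int_A f_0\,d\cP^{n+m}(x,\cdot)=\cQ^{n+m}(x,A)$.

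Part (iii) reduces to two short observations: $\bbQ_x(\tn>n)=\cQ^n(x,M)=e^{\gamma n}\lambda^n=1$ gives $\bbQ_x(\Xi)=1$ by downward continuity, while $\bbP_x(\tn>n)=\cP^n(x,M)\sim e^{-\gamma n}f_0(x)\to 0$ by Lemma \ref{quasstatlem}(ii) gives $\bbP_x(\Xi)=0$. For (iv), stationarity $\int_M\cQ(x,A)d\nu(x)=\nu(A)$ is a direct two-line verification using $\cL\mu=\lambda\mu$ and $e^\gamma\lambda=1$. To obtain total-variation convergence, I would factor
\[ \cQ^n(x,\cdot)=\frac{e^{\gamma n}\cP^n(x,M)}{f_0(x)}\int_{\cdot}f_0\,d\rho_{n,x},\qquad \rho_{n,x}(A):=\frac{\cP^n(x,A)}{\cP^n(x,M)}, \]
so that the prefactor tends to $1$ by Lemma \ref{quasstatlem}(ii), and Theorem \ref{quasergthm}(ii) applied with $m=\delta_x$ gives the exponential bound $\norm{\rho_{n,x}-\mu}_{TV}\leq K(\delta_x)e^{-\beta n}$; boundedness of $f_0$ transfers this to $\norm{\cQ^n(x,\cdot)-\nu}_{TV}\to 0$. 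Uniqueness of $\nu$ as stationary measure for the Q-process is then a direct consequence of this mixing.

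The main obstacle is the uniform passage from $\bbP_x(\cdot\mid\tn>n)$ to $\bbQ_x(\cdot)$ in part (i): the Markov reduction is routine, but pushing the limit through the expectation in the numerator genuinely requires the uniform estimate in Lemma \ref{quasstatlem}(ii), which is exactly where the delicate spectral theory of \cite{castro2024existence} enters. Once that uniform bound is in hand, parts (ii)-(iv) are bookkeeping around the $h$-transform identity $\cQ=e^\gamma f_0\cdot(\cP\cdot)/f_0$.
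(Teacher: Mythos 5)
Your proposal is correct, but it proves the theorem rather than citing it: the paper's own proof is a one-line verification that the Brusselator chain satisfies Hypothesis (H) of \cite{castro2022lyapunov} (via Theorem \ref{quasergthm}(ii) and Lemma \ref{quasstatlem}(ii)), after which all four claims are imported from Propositions 2.5 and 3.2 of that reference. What you have written out is essentially the standard Champagnat--Villemonais construction that underlies those cited propositions: the Doob $h$-transform by the eigenfunction $f_0$ with the normalisation $\lambda=e^{-\gamma}$, consistency of the finite-dimensional formulas via $\cP f_0=\lambda f_0$ (with the tacit convention $f_0(\partial)=0$ and reading $A_m\in\cF_m\otimes\cB(M)$, which is how the statement must be interpreted for the defining formula to make sense), the Markov reduction of $\bbP_x(A_m\mid\tn>n)$ followed by the uniform limit $e^{\gamma k}\cP^k(\cdot,M)\to f_0$ from Lemma \ref{quasstatlem}(ii), and the factorisation of $\cQ^n(x,\cdot)$ through the conditioned law $\rho_{n,x}$ to get total-variation convergence from Theorem \ref{quasergthm}(ii) applied to $\delta_x$. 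Each of these steps checks out (in particular your identification $e^{\gamma}\lambda=1$ from the proof of Lemma \ref{quasstatlem}, and the forced normalisation $\int_M f_0\,d\mu=1$ implicit in $\cQ^n(x,M)=1$). The trade-off is the usual one: the paper's route is shorter and delegates the delicate uniform estimates to the abstract framework, whereas your argument makes transparent exactly where each hypothesis is used --- the uniform convergence in Lemma \ref{quasstatlem}(ii) for the limit characterisation in (i), and the exponential mixing of Theorem \ref{quasergthm}(ii) for (iv) --- at the cost of having to re-derive what the reference already supplies.
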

\begin{proof}
    Under Theorem \ref{quasergthm} (ii) and Lemma \ref{quasstatlem} (ii), $\varphi$ satisfies Hypothesis (H) of \cite{castro2022lyapunov} and so the claims hold by \cite[Proposition 2.5 and Proposition 3.2]{castro2022lyapunov}.
\end{proof}

The significance of the Q-process is that it describes the dynamics of orbits of $(\theta,\varphi)$ that do not get absorbed by giving full measure to the set of such orbits. Moreover, by the uniqueness of the measure $\bbQ_x$, the Q-process is the only such process which describes the dynamics of the non-absorbing orbits of $\varphi$. As we'd expect, the unique stationary measure for this process coincides with the quasi-ergodic measure $\nu$ for $(\theta,\varphi)$.

Finally we have proved Theorem \ref{quasthm}.

\begin{proof}[Proof of Theorem \ref{quasthm}]
    Most of the results are found in Theorem \ref{quasergthm}, while the last part is contained in Theorem \ref{Qprocthm} (iv)
\end{proof}

\section{The CLE and conditioned Lyapunov exponents}

We move on to our application of the Q-process, namely, conditioned Lypaunov exponents laid out in Definition \ref{lyapexp}. Importantly, we define the skew product and matrix coycle.

\begin{definition}[Skew product]
    The family of mappings $(\Theta_n)_{n\in\bbNz}$ where $\Theta_n:\Sigma\times\bbR^2\to\Sigma\times\bbR^2$ given by
    \[ \Theta_n(\omega,x) = (\theta^n\omega,\varphi_n(\omega,x)) \]
    for all $n\in\bbN$ and $\Theta_0 :=\mathrm{id}_{\Sigma\times\bbR^2}$
    is called the skew product of the RDS $(\theta,\varphi)$. Let $\Theta:=\Theta_1$ and note that under this definition, $\Theta_n$ is precisely the map $\Theta$ composed with itself $n$ times.
\end{definition}

As desired, the following proposition opens our analysis to ergodicity.

\begin{proposition}
    The skew product $(\Theta_n)_{n\in\bbNz}$ of $(\theta,\varphi)$ where $\Theta_n:\Sigma\times\bbR^2\to\Sigma\times\bbR^2$ given by
    \[ \Theta_n(\omega,x) = (\theta^n\omega,\varphi_n(\omega,x)) \]
    for all $n\in\bbN$ and $\Theta_0 :=\mathrm{id}_{\Sigma\times\bbR^2}$ forms a metric DS on $\Sigma\times M$ with respect to the probability measure $\bbQ_\nu:\cF\otimes\cB(M)\to[0,1]$ with
    \[ \bbQ_\nu(A) := \int_M \bbQ_x(A) \ d\nu(x) \]
    for all $A\in\cF\otimes\cB(M)$.
    
    In addition, $\bbQ_\nu$ is ergodic with respect to $(\Theta_n)_{n\in\bbNz}$ and gives full measure to orbits of $(\varphi_n)_{n\in\bbN}$ which never get absorbed. In particular, $\bbQ_\nu(\Xi) = 1$
    
     We denote the expectation with respect to the measure $\bbQ_\nu$ by $\bbE^\bbQ_\nu$.
\end{proposition}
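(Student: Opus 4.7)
The proposition bundles three assertions: that $\bbQ_\nu$ is a $\Theta$-preserving probability measure, that $\bbQ_\nu$ is $\Theta$-ergodic, and that $\bbQ_\nu(\Xi)=1$. My plan is to reduce the first and third to established properties of the Q-process from Theorem \ref{Qprocthm}, and to derive ergodicity by combining uniqueness of $\nu$ as a $\cQ$-stationary measure with the Bernoulli property of the noise shift $\theta$.

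Measurability of $\Theta$ on $\Sigma\times M$ is immediate from Theorem \ref{rdsthm}(i), and the absorption claim follows at once from $\bbQ_x(\Xi)=1$ for every $x\in M$ (Theorem \ref{Qprocthm}(iii)) via Fubini: $\bbQ_\nu(\Xi)=\int_M \bbQ_x(\Xi)\,d\nu(x)=1$. For $\Theta$-invariance of $\bbQ_\nu$, a $\pi$-$\lambda$ argument reduces the check to cylinder sets of the form $E=\{\omega_0\in A_0,\ldots,\omega_{k-1}\in A_{k-1}\}\times B$ with $A_i\in\cB(\bbR^r)$ and $B\in\cB(M)$; expanding $\bbQ_\nu(\Theta^{-1}E)$ via the Markov representation of the Q-process supplied by Theorem \ref{Qprocthm}(ii) and invoking the stationarity identity $\int_M \cQ(x,\cdot)\,d\nu(x)=\nu$ from Theorem \ref{Qprocthm}(iv) closes the computation.

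The main effort is ergodicity. Given a $\Theta$-invariant $g\in L^\infty(\Sigma\times M,\bbQ_\nu)$, I would set $G(x):=\int g\,d\bbQ_x$; combining the Markov property of the Q-process with the invariance $g\circ\Theta^n=g$ shows that $G=\cQ^n G$ for every $n\in\bbN$, so $G$ is a bounded $\cQ$-harmonic function. Uniqueness of $\nu$ as the stationary measure of the Q-chain (Theorem \ref{Qprocthm}(iv)) then forces $G\equiv c$ for the constant $c:=\bbE^\bbQ_\nu[g]$. The principal obstacle is to upgrade $G\equiv c$ to $g\equiv c$ pointwise $\bbQ_\nu$-a.e., i.e.\ to transfer the triviality statement from the state factor $M$ to the full skew-product space $\Sigma\times M$.

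To carry out this last step I would use two mixing inputs: the TV-convergence $\cQ^n(x,\cdot)\to\nu$ for every $x\in M$ from Theorem \ref{Qprocthm}(iv), and the fact that the noise shift $\theta$ on $(\Sigma,\bbP)$ is a Bernoulli shift, hence $K$-mixing. Together these should imply that the tail $\sigma$-algebra $\bigcap_n\Theta^{-n}(\cF\otimes\cB(M))$ is $\bbQ_\nu$-trivial; since every $\Theta$-invariant function is in particular tail-measurable, this yields $g\equiv c$ $\bbQ_\nu$-a.e.\ and hence ergodicity. The subtle point, and where I expect the bulk of the technical work to lie, is that $\bbQ_\nu$ is \emph{not} the product measure $\bbP\otimes\nu$ — the conditioning to avoid absorption couples the noise and the state — so the classical "unique stationary measure $\Rightarrow$ ergodic skew product" principle from standard RDS theory has to be adapted to the present conditioned setting, most cleanly via a coupling argument between two Q-processes along the lines developed in \cite{castro2022lyapunov}.
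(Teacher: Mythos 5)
The paper does not prove this proposition at all: its ``proof'' is a one-line citation of \cite[Proposition 2.6]{castro2022lyapunov}. You are therefore attempting to reprove the cited result from scratch, and your skeleton (invariance of $\bbQ_\nu$ checked on cylinder sets via stationarity of $\nu$ for the kernels $\cQ^n$; ergodicity by showing every bounded $\Theta$-invariant $g$ has $G(x):=\int g\,d\bbQ_x$ bounded $\cQ$-harmonic, hence constant) is the standard and correct one. Two of your three assertions (invariance and $\bbQ_\nu(\Xi)=1$) go through essentially as you describe. Note also that it is really the total-variation convergence $\cQ^n(x,\cdot)\to\nu$, not uniqueness of $\nu$ per se, that forces $G\equiv c$: from $G=\cQ^nG$ one gets $G(x)=\lim_n\int G\,d\cQ^n(x,\cdot)=\int G\,d\nu$ for \emph{every} $x\in M$.

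The genuine gap is exactly where you place it, and your proposed repair does not work as stated. You want to upgrade $G\equiv c$ to $g\equiv c$ $\bbQ_\nu$-a.e.\ by arguing that the tail $\sigma$-algebra $\bigcap_n\Theta^{-n}(\cF\otimes\cB(M))$ is $\bbQ_\nu$-trivial because $\theta$ is a Bernoulli shift and $\cQ^n(x,\cdot)\to\nu$ in total variation. But the Bernoulli/$K$-mixing property of $\theta$ is a statement about the product measure $\bbP$, and the marginal of $\bbQ_\nu$ on $\Sigma$ is \emph{not} $\bbP$ --- on $\cF_n$ it carries the density $e^{\gamma n}f_0(\varphi_n)/f_0(x)$, which couples the noise coordinates to the state --- so triviality of the tail under $\bbP$ does not transfer to $\bbQ_\nu$ without a separate argument, which you defer to an unspecified coupling. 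The step can instead be closed directly and more cheaply by a martingale argument: $\Theta$-invariance gives $g=g\circ\Theta_n$, and the Markov property of the Q-process (Theorem \ref{Qprocthm}(ii)) yields $\bbE^\bbQ_\nu\left[g\mid\cF_n\otimes\cB(M)\right]=G(\varphi_n)$; since $G\equiv c$ $\nu$-a.e.\ and $\varphi_n$ has law $\nu$ under $\bbQ_\nu$, the left-hand side equals $c$ $\bbQ_\nu$-a.s.\ for every $n$, and Doob's martingale convergence theorem sends it to $g$ as $n\to\infty$, giving $g\equiv c$. Without this (or an actual coupling proof of tail-triviality), your ergodicity argument is incomplete at its decisive step.
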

\begin{proof}
    See \cite[Proposition 2.6]{castro2022lyapunov}.
\end{proof}

For the following definition note that the Brusselator RDS $(\theta,\varphi)$ according to \eqref{varphi}, \eqref{eq:fbruss} and \eqref{eq:Fbrus} is of class $\cC^1$.

\begin{definition}[Matrix cocycle]\label{matcyc}
    Let $x\in M$ and $T_xM\subset\bbR^2$ denote the tangent space of $M$ at $x$. For each $(\omega,x)\in\Sigma\times M$ and $n\in\bbN$ define the linear map $\Phi_n(\omega,x):T_xM \to T_{\varphi_n(\omega,x)}M$ given by
        \[ \Phi_n(\omega,x)v = A(\Theta_{n-1}(\omega,x))...A(\Theta(\omega,x))A(\omega,x)v \]
    for all $v\in T_xM$. Here $(\Theta_n)_{n\in\bbNz}$ is the skew product of $(\theta,\varphi)$ and $A:\Sigma\times M\to\bbR^{2\times2}$ is given by,
    \[ A(\omega,x) := \mathrm{D}_x\varphi_1(\omega,x) = \begin{pmatrix}
        A_{11}(\omega,x) & A_{12}(\omega,x) \\
        A_{21}(\omega,x) & A_{22}(\omega,x) \end{pmatrix} \]
    where $\mathrm{D}_x$ denotes the derivative operator with respect to $x$ and
        \begin{align*}
            A_{11}(\omega,x) &:= 1+(2x_1x_2-(1+b))\tau + \sqrt{\frac{\tau}{\Omega}}\left(-\frac{1}{2\sqrt{x_1}}\omega_{0,2}-\frac{1}{2}\sqrt{\frac{b}{x_1}}\omega_{0,3}+\sqrt{x_2}\omega_{0,4}\right), \\
            A_{12}(\omega,x) &:= x_1^2\tau + \sqrt{\frac{\tau}{\Omega}}\left(\frac{x_1}{2\sqrt{x_2}}\omega_{0,4}\right), \\
            A_{21}(\omega,x) &:= (b-2x_1x_2)\tau + \sqrt{\frac{\tau}{\Omega}}\left(\frac{1}{2}\sqrt{\frac{b}{x_1}}\omega_{0,3}-\sqrt{x_2}\omega_{0,4}\right), \\
            A_{22}(\omega,x) &:= 1 - x_1^2\tau - \sqrt{\frac{\tau}{\Omega}}\left(\frac{x_1}{2\sqrt{x_2}}\omega_{0,4}\right).
        \end{align*}
        We have used the notation $x = (x_1,x_2)^\intercal$ and $\omega = (\omega_0,\omega_1,...) = ((\omega_{0,1},...,\omega_{0,4})^\intercal, (\omega_{1,1},...,\omega_{1,4})^\intercal,...)$ and refer to $\Phi$ as the matrix cocycle.
\end{definition}

The matrix cocycle $\Phi$ defines the evolution of the tangent vectors to those orbits which start at $x\in M$ assuming they haven't absorbed by the cemetery state. In particular, for $\omega\in\Sigma, \ x\in M$ and $\tn>n$, the matrix cocycle at time $n$ is precisely $\Phi_n(\omega,x) = \mathrm{D}_x\varphi_n(\omega,x)$

Castro, Cheminitz, Chu, Engel, Lamb and Rasmussen \cite{castro2022lyapunov} proved results analogous to Theorem \ref{osethm} and more for the abstract case of RDS exhibiting an invertible matrix cocycle $\Phi$ and a certain integrability condition. Unfortunately, for our Brusselator RDS $(\theta,\varphi)$, we do not have the luxury of such invertibility and hence we cannot cite the results we need directly from \cite{castro2022lyapunov}. Instead we follow their techniques closely and exploit the explicit form of $\Phi$ in order to work our way around the lack of invertibility.

First we require the following lemma.

\begin{lemma}\label{explem}
    Let $\Phi$ be the matrix cocycle of $(\theta,\varphi)$. Then for $\bbQ_x$-almost all $x\in M$ and $n\in\bbN$, there exists a constant $\Tilde{C}>0$ such that,
    \[ \bbE_x\left[\abs{\log\norm{\Phi_n}}^2\right] \leq \Tilde{C}n^2 \quad \text{and} \quad \bbE_x\left[\abs{\log\abs{\det\Phi_n}}^2\right] \leq 4\Tilde{C}n^2. \]
\end{lemma}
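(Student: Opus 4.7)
My plan is to reduce the $n$-step bound on $\Phi_n$ to a uniform single-step moment bound on $A$ by exploiting the multiplicative cocycle structure, and then to establish that single-step bound using the explicit polynomial-in-Gaussian form of $A$ on compact $M$. Writing $\Phi_n=(A\circ\Theta_{n-1})\cdots(A\circ\Theta_0)$ and applying submultiplicativity of the operator norm together with multiplicativity of the determinant, I first get
\[ \log\norm{\Phi_n}\leq S_n := \sum_{k=0}^{n-1}\log\norm{A\circ\Theta_k},\qquad \log\abs{\det\Phi_n}=D_n:=\sum_{k=0}^{n-1}\log\abs{\det A\circ\Theta_k}. \]
For any $2\times 2$ matrix $B$ with singular values $\sigma_1\geq\sigma_2\geq 0$, $\norm{B}=\sigma_1\geq\sqrt{\sigma_1\sigma_2}=\sqrt{\abs{\det B}}$, so $\log\norm{\Phi_n}\geq\tfrac12 D_n$ is a complementary lower bound. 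Splitting on the sign of $\log\norm{\Phi_n}$ then yields $\abs{\log\norm{\Phi_n}}\leq\max(\abs{S_n},\tfrac12\abs{D_n})$, and hence $(\log\norm{\Phi_n})^2\leq S_n^2+\tfrac14 D_n^2$.

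Applying Cauchy-Schwarz on each $n$-term sum gives
\[ \bbE_x[(\log\norm{\Phi_n})^2]\leq n\sum_{k=0}^{n-1}\bbE_x[(\log\norm{A\circ\Theta_k})^2]+\frac{n}{4}\sum_{k=0}^{n-1}\bbE_x[(\log\abs{\det A\circ\Theta_k})^2], \]
and similarly $\bbE_x[(\log\abs{\det\Phi_n})^2]\leq n\sum_{k=0}^{n-1}\bbE_x[(\log\abs{\det A\circ\Theta_k})^2]$. By the Markov property at step $k$, conditional on $\varphi_k=y\in M$ the noise $\omega_k$ has distribution either exactly standard Gaussian (if $\bbE_x=\bbE_{\bbP_x}$) or absolutely continuous with respect to it with density bounded by $e^{\gamma}\sup_M f_0/\inf_M f_0<\infty$ (if $\bbE_x=\bbE_{\bbQ_x}$, using continuity and strict positivity of $f_0$ on compact $M$ from Theorem \ref{quasstatthm}). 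Each summand is thus dominated by $\sup_{y\in M}\bbE[(\log\norm{A(\omega_0,y)})^2]$ or the analogous determinant quantity, reducing the task to showing both are finite.

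From the explicit formulas in Definition \ref{matcyc}, both $\norm{A(\omega_0,y)}_F^2$ and $\abs{\det A(\omega_0,y)}$ are polynomials of degree at most $2$ in the Gaussian vector $\omega_0\in\bbR^4$ with coefficients continuous in $y$ and hence uniformly bounded on $M$. Polynomial growth in $\norm{\omega_0}$ together with Gaussian moments instantly gives $\sup_{y\in M}\bbE[(\log^+\norm{A(\omega_0,y)})^2]<\infty$ and $\sup_{y\in M}\bbE[(\log^+\abs{\det A(\omega_0,y)})^2]<\infty$. The singular-value inequality also yields $\log^-\norm{A}\leq\tfrac12\log^-\abs{\det A}$, reducing the remaining negative-tail bound to controlling $\sup_{y\in M}\bbE[(\log^-\abs{\det A(\omega_0,y)})^2]$.

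The main obstacle is this last anti-concentration estimate. A direct computation using independence of the Gaussian components gives $\bbE[\det A(\omega_0,y)]=1+(2x_1x_2-1-b-x_1^2)\tau+x_1^2\tau^2$, which is bounded below uniformly on compact $M$ by some $c_0>0$ provided $\tau$ is small enough (as must be the case for a meaningful Euler-Maruyama scheme). Hence $\bbE[(\det A(\omega_0,y))^2]\geq c_0^2$ uniformly in $y\in M$, and the Carbery-Wright inequality applied to the degree-$2$ polynomial $\det A$ in standard Gaussians yields $\bbP(\abs{\det A(\omega_0,y)}\leq t)\leq C_1 t^{1/2}$ uniformly in $y\in M$ with $C_1=C_1(c_0,M)$. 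Integrating via $\bbE[(\log^-\abs{\det A})^2]=\int_0^\infty 2s\,\bbP(\abs{\det A}<e^{-s})\,ds$ then gives the desired uniform finite second moment, closing the argument with $\tilde C$ chosen suitably in terms of the uniform constants above.
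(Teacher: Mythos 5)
Your proof is correct, and it follows the same overall skeleton as the paper's — factor $\Phi_n$ into one-step matrices, use submultiplicativity plus Cauchy--Schwarz, and reduce to a uniform single-step second-moment bound on $A$ — but it treats the lower tail of $\log\norm{\Phi_n}$ by a genuinely different mechanism. The paper controls the negative part of $\log\norm{A\circ\Theta_k}$ via the structural identity $A_{12}+A_{22}=1$, which forces $\norm{A}\geq R/2$ pointwise and needs no anti-concentration input, and then passes directly from $\norm{\Phi_n}\leq\prod_k\norm{A\circ\Theta_k}$ to $\abs{\log\norm{\Phi_n}}\leq\sum_k\abs{\log\norm{A\circ\Theta_k}}$; that last step only controls the positive part of $\log\norm{\Phi_n}$, since a product of matrices each of norm at least $R/2$ can still have arbitrarily small norm. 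Your complementary bound $\norm{\Phi_n}\geq\sqrt{\abs{\det\Phi_n}}$ combined with multiplicativity of the determinant is exactly what is needed to close that, so on this point your argument is tighter than the paper's. The price is that you must bound $\bbE\left[(\log^-\abs{\det A})^2\right]$, for which you invoke Carbery--Wright; this works (your formula $\bbE[\det A]=1+(2x_1x_2-1-b-x_1^2)\tau+x_1^2\tau^2$ is correct — it is $\det(I+\tau\,\mathrm{D}F_0)$, the $\Omega^{-1}$ cross terms cancelling), but it imports a smallness condition on $\tau$ relative to $M$ that the paper never states and is heavier machinery than the paper's elementary observation. Both proofs share the same mild imprecision about the meaning of $A\circ\Theta_k$ on paths absorbed before time $n$; your explicit handling of whether $\bbE_x$ is taken under $\bbP_x$ or $\bbQ_x$ (via the uniformly bounded one-step density $e^{\gamma}f_0\circ\varphi_1/f_0$) is a sensible addition.
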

\begin{proof}
    Let $k\in\bbNz$ and $x\in M$. We first seek to bound the term, $\bbE_x\left[\norm{A\circ\Theta_k}\right]$
    Since all norms on a finite dimensional space are equivalent, there exists a constant $L>0$ such that
    \[ \norm{A(\Theta_k(\omega,x))} = \norm{A(\theta^k\omega,\varphi_k(\omega,x))} \leq L \sum_{i,j}\abs{A_{ij}(\theta^k\omega,\varphi_k(\omega,x))}. \]
    for each $k\in\bbNz$ and $(\omega,x)\in\Sigma\times M$. Next we may deduce, from the explicit form of the $A_{ij}$ in Definition \ref{matcyc} and repeated use of the triangle inequality, that each $\abs{A_{ij}(\theta^k\omega,\varphi_k(\omega,x))}$ is bounded above by non-negative linear combinations of $\abs{\varphi_k(\omega,x)_1}^p\abs{\varphi_k(\omega,x)_2}^q\abs{\omega_{k,m}}^r$ and  where $\varphi_k(\omega,x) = (\varphi_k(\omega,x)_1,\varphi_k(\omega,x)_2)^\intercal$ with $p,q\in\{-\frac{1}{2},0,\frac{1}{2},1,2\}, \ r\in\{0,1\}$ and $m\in\{1,...,4\}$. That is,
    \[ \abs{A_{ij}(\theta^k\omega,\varphi_k(\omega,x))} \leq \sum_{p,q}\sum_{m=1}^4\sum_{r=0}^1 a_{ij}(m,r,p,q) \abs{\varphi_k(\omega,x)_1}^p\abs{\varphi_k(\omega,x)_2}^q\abs{\omega_{k,m}}^r  \]
    for each $i,j\in\{1,2\}$ where each constant $a_{ij}(m,r,p,q)\geq0$ depends on $m,r,p,q$ but is independent of $k$.
    Also, since $\varphi_k(\omega,x)\in M \ \bbQ_x$-a.s for all $k\in\bbNz, \ (\omega,x)\in \Sigma\times M$ and $M\subset\bbR^2_{>0}$ is compact, then there exists constants $C,c>0$ such that $0<c<\varphi_k(\omega,x)_1,\varphi_k(\omega,x)_2<C$ for all $k\in\bbNz$. It follows that, regardless of the sign of the powers $p$ and $q$ and then, $\abs{\varphi_k(\omega,x)_1}^p\abs{\varphi_k(\omega,x)_2}^q$ is bounded above and has the same upper bound for each $k\in\bbNz$. Hence we may write,
    \begin{align*}
        \abs{A_{ij}(\theta^k\omega,\varphi_k(\omega,x))} &\leq \sum_{p,q}\sum_{m=1}^4\sum_{r=0}^1 a_{ij}(m,r,p,q) \abs{\varphi_k(\omega,x)_1}^p\abs{\varphi_k(\omega,x)_2}^q\abs{\omega_{k,m}}^r \\
        &\leq \sum_{p,q}\sum_{m=1}^4\sum_{r=0}^1 a_{ij}(m,r,p,q) b_{ij}(m,r,p,q) \abs{\omega_{k,m}}^r,
    \end{align*} 
    for constants $ b_{ij}(m,r,p,q)\geq0$ which are also dependent on $m,r,p,q$ but independent of $k$. 

    Since $\omega_{k,m}$ is normally distributed with zero mean and unit variance for all $k\in\bbNz$ and $m\in\{1,...,4\}$, we let $\Tilde{\eta}$ be the measure corresponding to this distribution. 
    \begin{gather*}
        \int_\Sigma \abs{A_{ij}(\theta^k\omega,\varphi_k(\omega,x))} \ d\bbP(\omega) \leq \int_\bbR \sum_{p,q}\sum_{m=1}^4\sum_{r=0}^1 a_{ij}(m,r,p,q) b_{ij}(m,r,p,q) \abs{y}^r \ d\Tilde{\eta}(y) \\
    = \sum_{p,q}\sum_{m=1}^4 a_{ij}(m,0,p,q) b_{ij}(m,0,p,q)
    + \sum_{p,q}\sum_{m=1}^4 a_{ij}(m,1,p,q) b_{ij}(m,1,p,q) \int_\bbR \abs{y} \ d\Tilde{\eta}(y)
    \end{gather*}
    
    Then by the symmetry of the standard normal distribution,
    \[ \int_\bbR \abs{y} \ d\Tilde{\eta}(y) = 2 \int_{[0,\infty)} y \ d\Tilde{\eta}(y) = \frac{2}{\sqrt{2\pi}} \int_{[0,\infty)} y e^{-\frac{y^2}{2}} \ d\ell(y) = \sqrt{\frac{2}{\pi}}. \]
    Consequently,
    \begin{multline*}
        \int_\Sigma \abs{A_{ij}(\theta^k\omega,\varphi_k(\omega,x))} \ d\bbP(\omega) \leq \sum_{p,q}\sum_{m=1}^4 a_{ij}(m,0,p,q) b_{ij}(m,0,p,q) \\
        + \sum_{p,q}\sum_{m=1}^4 a_{ij}(m,1,p,q) b_{ij}(m,1,p,q)\sqrt{\frac{2}{\pi}}. 
    \end{multline*}
    This sum is finite as it contains finitely many terms which themselves are finite. Crucially, the sum is also independent of $k$. Let
    \[ C_{ij}:= \sum_{p,q}\sum_{m=1}^4 a_{ij}(m,0,p,q) b_{ij}(m,0,p,q) + \sum_{p,q}\sum_{m=1}^4 a_{ij}(m,1,p,q) b_{ij}(m,1,p,q)\sqrt{\frac{2}{\pi}}<\infty. \]
    It follows that,
    \begin{align*}
        \bbE_x\left[\norm{A\circ\Theta_k}\right] &= \int_{\Sigma\times M} \norm{A(\Theta_k(\omega,y))} \ d\bbP_x(\omega,y) = \int_\Sigma \norm{A(\Theta_k(\omega,x))} \ d\bbP(\omega) \\
        &\leq \int_\Sigma L \sum_{i,j}\abs{A_{ij}(\theta^k\omega,\varphi_k(\omega,x))} \ d\bbP(\omega) = L \sum_{i,j} \int_\Sigma \abs{A_{ij}(\theta^k\omega,\varphi_k(\omega,x))} \ d\bbP(\omega) \\
        &\leq L \sum_{i,j} C_{ij} < \infty.
    \end{align*}

    We now bound $\bbE_x\left[\abs{\log\norm{\Phi_n}}^2\right]$ for each $n\in\bbN$. By Definition \ref{matcyc}, for all $k\in\bbNz, \ \omega\in\Sigma$ and $x\in M$,  $A_{12}(\Theta_k(\omega,x))+A_{22}(\Theta_k(\omega,x)) = 1$ and so $\max\{\abs{A_{12}(\Theta_k(\omega,x))},\abs{A_{22}(\Theta_k(\omega,x))}\} \geq 1/2$ . By the equivalence of norms on finite dimensional spaces, there is a constant $R>0$, such that for all $k\in\bbNz, \ \omega\in\Sigma$ and $x\in M$,
    \[ \norm{A(\Theta_k(\omega,x))} \geq R\max_{i,j}\abs{A_{ij}(\Theta_k(\omega,x))} \geq \frac{R}{2}. \]
   
    A simple calculus argument can be used to show that $f:[1,\infty)\to\bbR$ given by $f(x) = x - (\log x)^2$ is an increasing function with $f(1) = 1$. And so $(\log x)^2<x$ for all $x\geq 1$. In particular, if $R\geq 2$, then for all $k\in\bbNz, \ \omega\in\Sigma$ and $\bbQ_x$-almost all $x\in M$,
    \[ \bbE_x\left[\abs{\log\norm{A\circ\Theta_k}}^2 \right] = \bbE_x\left[ \chi_{\{\norm{A\circ\Theta_k} \geq 1\}} \abs{\log\norm{A\circ\Theta_k}}^2 \right] \leq \bbE_x\left[ \norm{A\circ\Theta_k} \right] \leq L \sum_{i,j} C_{ij}. \]
    
    If $R<2$, then
    \[ \bbE_x\left[ \chi_{\{R/2 \leq \norm{A\circ\Theta_k} < 1\}} \abs{\log\norm{A\circ\Theta_k}}^2 \right] \leq \bbE_x\left[ \abs{\log\frac{R}{2}}^2 \right] = \abs{\log\frac{R}{2}}^2 \]
    since $\abs{\log}^2$ is decreasing on $[\frac{R}{2},1)$. Hence,
    \begin{align*}
        \bbE_x\left[ \abs{\log\norm{A\circ\Theta_k}}^2 \right] &= \bbE_x\left[ \chi_{\{R/2 \leq \norm{A\circ\Theta_k} < 1\}} \abs{\log\norm{A\circ\Theta_k}}^2 \right] + \bbE_x\left[ \chi_{\{\norm{A\circ\Theta_k} \geq 1\}} \abs{\log\norm{A\circ\Theta_k}}^2 \right] \\
        &\leq \abs{\log\frac{R}{2}}^2 + L \sum_{i,j} C_{ij}.
    \end{align*}
    
    Let $\Tilde{C}:= \abs{\log\frac{R}{2}}^2 + L \sum_{i,j} C_{ij}$, so that, no matter the size of $R$, $\bbE_x\left[ \abs{\log\norm{A\circ\Theta_k}}^2 \right] \leq \Tilde{C}$ for all $k\in\bbNz$.

    Now let $n\in\bbN$. By Definition \ref{matcyc},
    \[ \norm{\Phi_n(\omega,x)} = \norm{A(\Theta_{n-1}(\omega,x))...A(\Theta(\omega,x))A(\omega,x)} \leq \norm{A(\Theta_{n-1}(\omega,x))}...\norm{A(\Theta(\omega,x))}\norm{A(\omega,x)} \]
    for all $(\omega,x)\in\Sigma\times M$ and $n\in\bbN$. It follows from the additive property of $\log$ and the Cauchy-Schwarz inequality \cite[Theorem 1.35]{rudin1976principles} that,
    \begin{align*}
         \bbE_x\left[\abs{\log\norm{\Phi_n}}^2\right] &\leq \bbE_x\left[\left( \sum_{k=0}^{n-1} \abs{\log\norm{A\circ\Theta_k}}\right)^2\right] 
         \leq \bbE_x\left[ n\cdot \sum_{k=0}^{n-1} \abs{\log\norm{A\circ\Theta_k}}^2\right] \\
         &= n\cdot \sum_{k=0}^{n-1} \bbE_x\left[\abs{\log\norm{A\circ\Theta_k}}^2\right] \leq  n\cdot \sum_{k=0}^{n-1} \Tilde{C} = \Tilde{C}n^2.
    \end{align*}
    for all $n\in\bbN$ and $\bbQ_x$-almost all $x\in M$.
    
    In addition, note that for all $(\omega,x)\in\Sigma\times M$ and $n\in\bbN$,
    \[ \abs{\det\Phi_n(\omega,x)} \leq \norm{\Phi_n(\omega,x)}^2 \]
    when $\norm{\cdot}$ is induced by the Euclidean norm and so,
    \[ \bbE_x\left[\abs{\log\abs{\det\Phi_n}}^2\right] \leq \bbE_x\left[\abs{\log\left(\norm{\Phi_n}^2\right)}^2\right] = \bbE_x\left[\abs{2\log\norm{\Phi_n}}^2\right] = 4\bbE_x\left[\abs{\log\norm{\Phi_n}}^2\right] \leq 4\Tilde{C}n^2. \]
\end{proof}

We are now able to apply Kingman's subadditive ergodic theorem in order to prove a version of the Furstenberg-Kesten theorem regarding the existence of the limits in Theorem \ref{osethm}. 

\begin{theorem}[Furstenberg-Kesten]\label{furstkest}
    Let $\Phi$ be the matrix cocycle of $(\theta,\varphi)$. There exists a $\Theta$-forward invariant set $\Delta\in\cF\otimes\cB(M), \ \Delta\subseteq\Xi$ such that $\bbQ_\nu(\Delta)=1$ and the limits 
    \[ \Lambda_1 := \lim_{n\to\infty}\frac{1}{n}\log \norm{\Phi_n(\omega,x)} \quad \text{and} \quad \Lambda_2:=\lim_{n\to\infty}\frac{1}{n}\log \abs{\det\Phi_n(\omega,x)} \]
    exist $\bbQ_\nu$-a.s for all $(\omega,x)\in\Delta$. Moreover $\Lambda_1, \Lambda_2 > -\infty$ are constant as well as both
    \[ \lim_{n\to\infty} \bbE^\bbQ_\nu \left[ \abs{\Lambda_1 - \frac{1}{n}\log \norm{\Phi_n} } \right] = 0 \quad \text{and} \quad \lim_{n\to\infty} \bbE^\bbQ_\nu \left[ \abs{\Lambda_2 - \frac{1}{n}\log \abs{\det\Phi_n} } \right] = 0. \] 
\end{theorem}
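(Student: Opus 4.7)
My plan is to derive the theorem by recognising the two quantities of interest as a subadditive and an additive cocycle over the ergodic metric dynamical system $((\Theta_n)_{n\in\bbNz},\bbQ_\nu)$ furnished by the previous proposition, and then invoking Kingman's subadditive and Birkhoff's ergodic theorems respectively. I would begin by recording the (sub)multiplicative structure inherited from Definition \ref{matcyc}: for all $n,m\in\bbNz$,
\[ \Phi_{n+m}(\omega,x) = \Phi_n(\Theta_m(\omega,x))\,\Phi_m(\omega,x), \]
so submultiplicativity of the operator norm yields
\[ \log\norm{\Phi_{n+m}} \leq \log\norm{\Phi_n}\circ\Theta_m + \log\norm{\Phi_m}, \]
and multiplicativity of the determinant turns $\log\abs{\det\Phi_n}$ into the Birkhoff sum $\sum_{k=0}^{n-1} \log\abs{\det\Phi_1}\circ\Theta_k$.

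Second, I would promote the moment bounds of Lemma \ref{explem} (currently stated under $\bbP_x$) to integrability under $\bbQ_\nu$. Using the identification of the Q-process in Theorem \ref{Qprocthm}(i), which for any $\cF_n\otimes\cB(M)$-measurable bounded $g$ reads $\bbE^\bbQ_x[g] = \frac{e^{\gamma n}}{f_0(x)}\bbE_x[g\cdot f_0\circ\varphi_n]$, together with the Cauchy--Schwarz corollary $\bbE_x[\abs{\log\norm{\Phi_1}}]\leq\tilde C^{1/2}$ of Lemma \ref{explem}, I obtain
\[ \bbE^\bbQ_x\bigl[\abs{\log\norm{\Phi_1}}\bigr] \leq \frac{e^{\gamma}\norm{f_0}_\infty}{f_0(x)}\tilde C^{1/2}, \]
and analogously for $\log\abs{\det\Phi_1}$. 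Because $f_0\in\cC^0(M)$ is strictly positive on the compact set $M$ (Theorem \ref{quasstatthm}), we have $\min_M f_0>0$, so integrating against $\nu$ gives finite $\bbE^\bbQ_\nu[\abs{\log\norm{\Phi_1}}]$ and $\bbE^\bbQ_\nu[\abs{\log\abs{\det\Phi_1}}]$.

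Third, I apply the ergodic theorems. Kingman's subadditive theorem with ergodic base $(\Theta,\bbQ_\nu)$ produces a $\bbQ_\nu$-a.s.\ limit $\Lambda_1:=\lim\frac{1}{n}\log\norm{\Phi_n}\in[-\infty,\infty)$, constant by ergodicity. Birkhoff's theorem gives the a.s.\ and $L^1$ limit $\Lambda_2:=\lim\frac{1}{n}\log\abs{\det\Phi_n}=\bbE^\bbQ_\nu[\log\abs{\det\Phi_1}]\in\reals$. To show $\Lambda_1>-\infty$, I would exploit the elementary inequality for $2\times 2$ matrices $\abs{\det A}\leq\norm{A}^2$ in the Euclidean operator norm, giving $\log\norm{\Phi_n}\geq\tfrac{1}{2}\log\abs{\det\Phi_n}$ and hence $\Lambda_1\geq\tfrac{1}{2}\Lambda_2>-\infty$. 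The $L^1$ convergence of $\frac{1}{n}\log\norm{\Phi_n}$ then follows from the $L^1$ form of Kingman's theorem, since the limit is finite and $(\log\norm{\Phi_1})^+\in L^1(\bbQ_\nu)$.

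Finally, I would let $\Delta_1,\Delta_2\subseteq\Xi$ denote the $\bbQ_\nu$-full measure sets on which the respective pointwise limits hold, and set
\[ \Delta := \bigcap_{k\in\bbNz}\Theta_k^{-1}(\Delta_1\cap\Delta_2). \]
This is forward $\Theta$-invariant by construction, remains inside $\Xi$ because $\Xi$ is $\Theta$-forward invariant (absorption is trapping), and has $\bbQ_\nu(\Delta)=1$ by $\Theta$-invariance of $\bbQ_\nu$. The main technical obstacle I anticipate is the handling of the non-invertibility of $\Phi$: we cannot directly cite the version of Furstenberg--Kesten proven in \cite{castro2022lyapunov}, so the finiteness of $\Lambda_1$ must be extracted indirectly via the determinant comparison rather than from a lower-bound moment on $\norm{\Phi_1^{-1}}$. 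This, together with the careful transfer of integrability from $\bbP_x$ to $\bbQ_\nu$ via the Q-process relation, is where the proof requires attention beyond a routine invocation of the ergodic theorems.
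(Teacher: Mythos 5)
Your proposal is correct, and it reaches the conclusion by a route that differs from the paper's in two substantive ways. First, where the paper treats both $\log\norm{\Phi_n}$ and $\log\abs{\det\Phi_n}$ as subadditive sequences and runs Kingman's theorem twice, you observe that the determinant term is genuinely additive and invoke Birkhoff, which immediately gives $\Lambda_2=\bbE^\bbQ_\nu[\log\abs{\det\Phi_1}]\in\reals$ together with the $L^1$ convergence; this is cleaner and costs nothing. Second, and more importantly, your argument for $\Lambda_1>-\infty$ and for the $L^1$ statement goes through the pointwise comparison $\log\norm{\Phi_n}\geq\tfrac{1}{2}\log\abs{\det\Phi_n}$ and the finiteness of $\Lambda_2$, whereas the paper tries to bound $\sup_n\frac{1}{n}\bbE^\bbQ_\nu[\abs{\log\norm{\Phi_n}}]$ directly by pushing the $\bbP_x$-moment bound $\bbE_x[\abs{\log\norm{\Phi_n}}]\leq n\sqrt{\tilde C}$ through the density $\frac{e^{\gamma n}}{f_0(x)}f_0\circ\varphi_n$; that route leaves an uncompensated factor $e^{\gamma n}$ and rests on the claim that $\cP^n(x,M)$ is bounded below uniformly in $n$, which is delicate since survival probabilities decay like $e^{-\gamma n}$. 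Your determinant comparison sidesteps this entirely and only requires one-step integrability of $\log\norm{\Phi_1}$ and $\log\abs{\det\Phi_1}$ under $\bbQ_\nu$, which you correctly extract from Lemma \ref{explem} at $n=1$ via the Q-process identity and the uniform positivity of $f_0$ on the compact set $M$. Two small points to tidy up: the Q-process change-of-measure formula is stated for bounded $\cF_n\otimes\cB(M)$-measurable functions, so you should note the extension to the unbounded integrands $\abs{\log\norm{\Phi_1}}$ and $\abs{\log\abs{\det\Phi_1}}$ by monotone convergence; and when invoking the $L^1$ form of Kingman you should record that under ergodicity the a.s.\ limit equals $\inf_n\frac{1}{n}\bbE^\bbQ_\nu[\log\norm{\Phi_n}]$, so your lower bound $\Lambda_1\geq\tfrac{1}{2}\Lambda_2>-\infty$ is exactly the hypothesis needed for the $L^1$ conclusion. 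With those remarks, your proof is complete and, in the finiteness step, arguably more robust than the one in the paper.
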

\begin{proof}
    Let $n\in\bbN$ and $\phi_n,\psi_n:\Sigma\times M\to\bbR, \ \phi_n(\omega,x)=\log\norm{\Phi_n(\omega,x)}$ and $\psi_n(\omega,x)=\log\abs{\det\Phi_n(\omega,x)}$ for all $(\omega,x)\in\Sigma\times M$. We show $(\phi_n)_{n\in\bbN}$ and $(\psi_n)_{n\in\bbN}$ are subadditive over the metric DS $(\Sigma\times M,\cF\otimes\cB(M),\bbQ_\nu, (\Theta_n)_{n\in \bbNz})$. Let $\omega\in\Sigma$ and $x\in M$. Then, 
    \[ \Phi_{n+m}(\omega,x) = A(\omega,x)...A(\Theta_{m-1}(\omega,x))A(\Theta_m(\omega,x))...A(\Theta_{n-1}(\Theta_m(\omega,x))) = \Phi_m(\omega,x)\Phi_n(\Theta_m(\omega,x)) \]
    and so,
    \begin{align*}
        \phi_{n+m}(\omega,x) &= \log\norm{\Phi_{n+m}(\omega,x)} = \log\norm{\Phi_m(\omega,x)\Phi_n(\Theta_m(\omega,x))} \leq  \log(\norm{\Phi_m(\omega,x)}\norm{\Phi_n(\Theta_m(\omega,x))}) \\
        &= \log\norm{\Phi_m(\omega,x)}+\log\norm{\Phi_n(\Theta_m(\omega,x))} = \phi_m(\omega,x) + \phi_n(\Theta_m(\omega,x)),
    \end{align*}
    and
    \begin{align*}
        \psi_{n+m}(\omega,x) &= \log\abs{\det\Phi_{n+m}(\omega,x)} = \log\abs{\det(\Phi_m(\omega,x)\Phi_n(\Theta_m(\omega,x)))} \\
        &=  \log(\abs{\det\Phi_m(\omega,x)}\abs{\det\Phi_n(\Theta_m(\omega,x))}) 
        = \log\abs{\det\Phi_m(\omega,x)}+\log\abs{\det\Phi_n(\Theta_m(\omega,x))} \\
        &= \psi_m(\omega,x) + \psi_n(\Theta_m(\omega,x)),
    \end{align*}
    as required.

    From Lemma \ref{explem}, 
    \[ \bbE_x\left[\abs{\phi_n}^2\right] = \bbE_x\left[\abs{\log\norm{\Phi_n}}^2\right] \leq \Tilde{C}n^2 \]
    and,
    \[ \bbE_x\left[\abs{\psi_n}^2\right] = \bbE_x\left[\abs{\log\abs{\det\Phi_n}}^2\right] \leq 4\Tilde{C}n^2 \]
    for $\bbQ_x$-almost all $x\in M$. By Jensen's inequality \cite[Proposition 1.7.3]{ross1996stochastic}
    \[ \left(\bbE_x\left[\abs{\phi_n}\right]\right)^2 \leq \bbE_x\left[\abs{\phi_n}^2\right] \quad \text{and} \quad \left(\bbE_x\left[\abs{\psi_n}\right]\right)^2 \leq \bbE_x\left[\abs{\psi_n}^2\right].  \]
    Hence by combining these two facts,
    \[ \bbE_x\left[\abs{\phi_n}\right] \leq n\sqrt{\Tilde{C}} \quad \text{and} \quad \bbE_x\left[\abs{\psi_n}\right] \leq 2n\sqrt{\Tilde{C}}. \]
    It follows that since $\phi_n$ is $(\cF_n\otimes\cB(M))$-measurable,
    \begin{align*}
        \int_{\Sigma\times M} \abs{\phi_n} \ d\bbQ_x &= \frac{e^{\gamma n}}{f_0(x)}\int_{\Sigma\times M} \abs{\log\norm{\Phi_n(\omega,y)}}f_0(\varphi_n(\omega,y)) \ d\bbP_x(\omega,y) \\
        &\leq \frac{e^{\gamma n}\norm{f_0}_\infty}{f_0(x)}\bbE_x\left[\abs{\phi_n}\right] \leq \frac{e^{\gamma n}\norm{f_0}_\infty}{f_0(x)}n\sqrt{\Tilde{C}}.
    \end{align*}
    Since $f_0\in\cC^0_+(M), \ f_0>0$ and $M$ is compact, then there exists a constant $c_1>0$ such that $f_0(x)\geq c_1$ for all $x\in M$.
    Hence,
    \[ \int_{\Sigma\times M} \abs{\phi_n} \ d\bbQ_\nu = \int_M\int_{\Sigma\times M} \abs{\phi_n} \ d\bbQ_x d\nu(x) \leq \int_M \frac{e^{\gamma n}\norm{f_0}_\infty}{f_0(x)}n\sqrt{\Tilde{C}} \ d\nu(x) \leq \frac{e^{\gamma n}\norm{f_0}_\infty}{c_1}n\sqrt{\Tilde{C}} < \infty. \]
    Moreover, by definition $\cP^n(x,M)>0$ for all $x\in M$ and $n\in\bbN$ so there exists a constant $c_2>0$ such that  $\cP^n(x,M)>c_2$ for all $n\in\bbN$. Now by Proposition \ref{quasstatlem} (ii), $\limsup_{n\to\infty} \frac{e^{\gamma n}}{f_0(x)} = \limsup_{n\to\infty}\frac{1}{\cP^n(x,M)}<\frac{1}{c_2}$ for all $x\in M$. Then using this fact and Fatou's lemma \cite[Lemma 1.28]{rudin86real},
    \begin{align*}
        \limsup_{n\to\infty}\frac{1}{n}\int_{\Sigma\times M} \abs{\phi_n} \ d\bbQ_\nu  &= \limsup_{n\to\infty}\frac{1}{n}\int_M \int_{\Sigma\times M} \abs{\phi_n} \ d\bbQ_x d\nu(x) \leq \int_M \limsup_{n\to\infty}\frac{1}{n} \int_{\Sigma\times M} \abs{\phi_n} \ d\bbQ_x d\nu(x) \\ 
        &\leq \int_M \limsup_{n\to\infty} \frac{e^{\gamma n}\norm{f_0}_\infty}{f_0(x)}\sqrt{\Tilde{C}} \ d\nu(x) \leq \int_M \frac{\norm{f_0}}{c_2}\sqrt{\Tilde{C}} \ d\nu(x) = \frac{\norm{f_0}}{c_2}\sqrt{\Tilde{C}} < \infty.
    \end{align*}
    Consequently,
    \[ \sup_{n\in\bbN}\frac{1}{n}\int_{\Sigma\times M} \abs{\phi_n} \ d\bbQ_\nu < \infty \quad \text{and} \quad \inf_{n\in\bbN}\frac{1}{n}\int_{\Sigma\times M} \phi_n \ d\bbQ_\nu \geq - \sup_{n\in\bbN}\frac{1}{n}\int_{\Sigma\times M} \abs{\phi_n} \ d\bbQ_\nu > -\infty  \]
    
    Through the same argument but with $\psi_n$ in place of $\phi_n$ and $2\sqrt{\Tilde{C}}$ in place of $\sqrt{\Tilde{C}}$,
    \[ \int_{\Sigma\times M} \abs{\psi_n} \ d\bbQ_\nu < \infty \quad \text{and} \quad \inf_{n\in\bbN}\frac{1}{n}\int_{\Sigma\times M} \psi_n \ d\bbQ_\nu >-\infty. \]

    The result now follows immediately from Kingman's Subadditive Ergodic Theorem \cite[Section 1.5]{Krengel+1985} with $\phi_n(\omega,x)=\log\norm{\Phi_n(\omega,x)}, \ \phi(\omega,x) = \Lambda_1$ and $\psi_n(\omega,x)=\log\abs{\det\Phi_n(\omega,x)}, \ \psi(\omega,x) = \Lambda_2$.
\end{proof}

Under the same setting, we prove the convergence of the previous limits in conditional probability. 

\begin{proposition}\label{condexpprop}
    Let $\Phi$ be the matrix cocycle of $(\theta,\varphi)$ and let $\Lambda_1$ and $\Lambda_2$ be the constants defined in the Furstenberg-Kesten Theorem \ref{furstkest}. Then for $\nu$-almost all $x\in M$,
    \[ \lim_{n\to\infty} \bbE_x \left[ \abs{\Lambda_1- \frac{1}{n}\log\norm{\Phi_n}} \ \middle| \ \tn>n \right] = 0 \quad \text{and} \quad \lim_{n\to\infty} \bbE_x \left[ \abs{\Lambda_2 - \frac{1}{n}\log\abs{\det\Phi_n}} \ \middle| \ \tn>n \right] = 0.\] \
\end{proposition}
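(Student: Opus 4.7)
The strategy is to reduce the conditional expectation under $\bbP_x$ to an expectation under the Q-process law $\bbQ_x$ and then invoke Theorem \ref{furstkest}. I treat only the limit involving $\Lambda_1$; the one involving $\Lambda_2$ is identical after replacing $\|\Phi_n\|$ by $|\det\Phi_n|$, and is in fact cleaner because $\log|\det\Phi_n|$ is an exact additive cocycle in $k$. Write $g_n(\omega,x) := \bigl|\Lambda_1 - \tfrac{1}{n}\log\|\Phi_n(\omega,x)\|\bigr|$, which is $\cF_n\otimes\cB(M)$-measurable.

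First, Theorem \ref{Qprocthm}(i) applied with $m=n$ gives the Radon--Nikodym relation $d\bbQ_x|_{\cF_n} = \frac{e^{\gamma n}}{f_0(x)}(f_0\circ\varphi_n)\,d\bbP_x|_{\cF_n}$. Since $g_n$ is $\cF_n$-measurable and $f_0 \in \cC^0(M)$ is bounded below by some $c_1 > 0$ on the compact set $M$,
\[ \bbE^\bbQ_x[g_n] \;\geq\; \frac{c_1 e^{\gamma n}}{f_0(x)}\,\bbE_x[g_n\chi_{\{\tn>n\}}]. \]
Combined with Lemma \ref{quasstatlem}(ii), which yields $e^{\gamma n}\cP^n(x,M) \to f_0(x)$ uniformly on $M$, I fix $N_0$ so that $e^{\gamma n}\cP^n(x,M) \geq f_0(x)/2$ for all $n \geq N_0$ and all $x \in M$. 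Dividing by $\cP^n(x,M)$ then gives, uniformly in $x\in M$ and for $n\geq N_0$,
\[ \bbE_x[g_n \mid \tn>n] \;=\; \frac{\bbE_x[g_n\chi_{\{\tn>n\}}]}{\cP^n(x,M)} \;\leq\; \frac{2}{c_1}\,\bbE^\bbQ_x[g_n]. \]
So it suffices to show $\bbE^\bbQ_x[g_n] \to 0$ for $\nu$-almost every $x\in M$.

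Theorem \ref{furstkest} provides both $g_n \to 0$ $\bbQ_\nu$-almost surely and $\bbE^\bbQ_\nu[g_n] \to 0$. Disintegrating $\bbQ_\nu = \int_M \bbQ_x\, d\nu(x)$, the pointwise statement gives $g_n \to 0$ $\bbQ_x$-a.s.\ for $\nu$-a.e.\ $x$. To upgrade this to $L^1(\bbQ_x)$-convergence I construct a $\bbQ_x$-integrable envelope. Submultiplicativity of the operator norm, together with the two-dimensional estimate $|\det\Phi_n| \leq \|\Phi_n\|^2$ used already in the proof of Lemma \ref{explem}, yields
\[ \tfrac{1}{n}\bigl|\log\|\Phi_n\|\bigr| \;\leq\; \tfrac{1}{n}\sum_{k=0}^{n-1}\Bigl(\bigl|\log\|A\circ\Theta_k\|\bigr| + \tfrac{1}{2}\bigl|\log|\det A\circ\Theta_k|\bigr|\Bigr). \]
The polynomial form of $A$ from Definition \ref{matcyc}, compactness of $M$, and Gaussian tails of the noise (used exactly as in Lemma \ref{explem}) place $|\log\|A\||^2$ and $|\log|\det A||^2$ in $L^1(\bbQ_\nu)$; the change of measure from $\bbP$ to $\bbQ$ on $\cF_1$ multiplies integrals only by the uniformly bounded factor $e^{\gamma}\|f_0\|_\infty/c_1$. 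Since $\Theta$ preserves $\bbQ_\nu$, Hopf's $L^2$ maximal ergodic inequality gives $\sup_{n\geq 1}\tfrac{1}{n}\sum_{k=0}^{n-1}|\log\|A\circ\Theta_k\|| \in L^2(\bbQ_\nu)$, and likewise for the determinant sum. Applying Fubini once more, the envelope lies in $L^2(\bbQ_x) \subset L^1(\bbQ_x)$ for $\nu$-a.e.\ $x$, so dominated convergence yields $\bbE^\bbQ_x[g_n] \to 0$ for $\nu$-a.e.\ $x$, which combined with the bound from the previous paragraph finishes the proof.

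The main obstacle is precisely this last step: passing from the ergodic-averaged statements in Theorem \ref{furstkest} to a genuinely pointwise-in-$x$ conclusion. Doing so requires assembling both (i) the pointwise a.s.\ limit on each fiber $\{(\cdot,x)\}$, extracted for $\nu$-a.e.\ $x$ via Fubini, and (ii) a fiberwise $L^2$ dominating function, extracted via the maximal ergodic inequality applied to the subadditive cocycle bounds. Both ingredients rely on the explicit polynomial-in-$x$ structure of $A$ from Definition \ref{matcyc}, transferred to the Q-process through the uniformly bounded Radon--Nikodym density on the compact set $M$; without this explicit structure, one would only be able to assert convergence along a subsequence in $\nu$-measure.
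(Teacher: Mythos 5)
Your route is genuinely different from the paper's. The paper establishes $\lim_n\bbE^\bbQ_x[\Lambda_1-\tfrac{1}{n}\log\norm{\Phi_n}]=0$ for $\nu$-a.e.\ $x$ and the uniform conditional second-moment bound $\bbE_x[\abs{\tfrac{1}{n}\log\norm{\Phi_n}}^2\mid\tn>n]\leq\Tilde{C}/c'$, and then outsources the passage to conditional expectations to \cite[Theorem 2.10]{castro2022lyapunov} with $p=2$. You instead make that passage by hand: your change-of-measure inequality $\bbE_x[g_n\mid\tn>n]\leq\tfrac{2}{c_1}\bbE^\bbQ_x[g_n]$ (valid for $n\geq N_0$, using $f_0\geq c_1$ and the uniform convergence $e^{\gamma n}\cP^n(x,M)\to f_0(x)$) is correct and is an attractive, self-contained reduction. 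The fiberwise a.s.\ convergence via disintegration of $\bbQ_\nu$ is also fine.

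The gap is in your dominating function. The envelope
$\tfrac{1}{n}\abs{\log\norm{\Phi_n}}\leq\tfrac{1}{n}\sum_{k}\bigl(\abs{\log\norm{A\circ\Theta_k}}+\tfrac{1}{2}\abs{\log\abs{\det A\circ\Theta_k}}\bigr)$
is algebraically correct (the lower bound on $\norm{\Phi_n}$ really does have to go through $\abs{\det\Phi_n}^{1/2}$, since a lower bound on each $\norm{A\circ\Theta_k}$ does not bound $\norm{\Phi_n}$ from below), but your claim that $\abs{\log\abs{\det A}}^2\in L^1(\bbQ_\nu)$ follows ``exactly as in Lemma \ref{explem}'' is not justified. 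Lemma \ref{explem} only controls $\det\Phi_n$ from \emph{above} via $\abs{\det\Phi_n}\leq\norm{\Phi_n}^2$, and its only lower bound is $\norm{A}\geq R/2$ obtained from $A_{12}+A_{22}=1$; nowhere does the paper bound $\abs{\det A}$ away from zero or show that its logarithm is square-integrable. Since $\det A(\omega,x)$ is a quadratic polynomial in the Gaussian vector $\omega_0$ that can vanish, you need a separate argument showing that this polynomial is non-degenerate uniformly over the compact set $M$ and that the resulting logarithmic singularity is square-integrable against the Gaussian; without it, the $L^2$ maximal ergodic step and hence the dominated convergence on fibers do not go through. A way to avoid the determinant entirely is to replace domination by uniform integrability: Lemma \ref{explem} already gives $\bbE_x[\abs{\log\norm{\Phi_n}}^2]\leq\Tilde{C}n^2$, and transferring this to $\bbQ_x$ via the same Radon--Nikodym density and the bound $e^{\gamma n}/f_0(x)\sim 1/\cP^n(x,M)\leq 1/c_2$ yields $\sup_n\bbE^\bbQ_x[g_n^2]<\infty$, which together with $g_n\to 0$ $\bbQ_x$-a.s.\ gives $\bbE^\bbQ_x[g_n]\to 0$ by Vitali's theorem and completes your argument without any lower bound on $\abs{\det A}$.
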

\begin{proof}
    In the same way as the proof of Theorem $\ref{furstkest}$, let $n\in\bbN$ and $\phi_n,\psi_n:\Sigma\times M\to\bbR, \ \phi_n(\omega,x)=\log\norm{\Phi_n(\omega,x)}$ and $\psi_n(\omega,x)=\log\abs{\det\Phi_n(\omega,x)}$ for all $(\omega,x)\in\Sigma\times M$.

    Let $x\in M$. Note that in Theorem \ref{furstkest} we showed that $\bbE^\bbQ_x\left[\abs{\phi_n}\right]<\infty$. Hence,
    \[ \abs{ \bbE^\bbQ_x\left[\Lambda_1-\frac{\phi_n}{n}\right]} \leq \Lambda_1 + \frac{1}{n}\bbE^\bbQ_x\left[\abs{\phi_n}\right] < \infty.   \]
    It follows from the dominated convergence theorem \cite[Theorem 1.34]{rudin86real} and Theorem \ref{furstkest} that, 
    \[  \int_M \lim_{n\to\infty} \bbE^\bbQ_x\left[\Lambda_1-\frac{\phi_n}{n}\right] \ d\nu(x) = \lim_{n\to\infty} \int_M \bbE^\bbQ_x\left[\Lambda_1-\frac{\phi_n}{n}\right] \ d\nu(x) = \lim_{n\to\infty} \bbE^\bbQ_\nu\left[\Lambda_1-\frac{\phi_n}{n}\right] = 0. \]
    Consequently for $\nu$-almost every $x\in M$,
    \[ \lim_{n\to\infty} \bbE^\bbQ_x\left[\Lambda_1-\frac{\phi_n}{n}\right] = 0. \]

    Moreover, by definition $\cP^n(x,M)>0$ for all $x\in M$ and $n\in\bbN$ so there exists a constant $c'>0$ such that  $\cP^n(x,M)>c'$ for all $n\in\bbN$. Now by Lemma \ref{explem}, 
    \[ \bbE_x\left[ \abs{\frac{\phi_n}{n}}^2 \ \middle| \ \tn>n \right] = \frac{1}{n^2}\frac{\bbE_x\left[\abs{\phi_n}^2\right]}{\cP^n(x,M)} \leq \frac{1}{n^2}\frac{\Tilde{C}n^2}{c'} = \frac{\Tilde{C}}{c'} < \infty. \]
    
    Now the first limit in the proposition follows from \cite[Theorem 2.10]{castro2022lyapunov} where $p=2$. The second limit follows from using the same argument but with $\psi_n$ in place of $\phi_n$ and $4\Tilde{C}$ in place of $\Tilde{C}$.
\end{proof}

Having established the finite nature of the limits and the convergence in conditional expectation we are able to prove Theorem C.

\begin{proof}[Proof of Theorem \ref{osethm}]
    Since $\Phi$ forms a linear cocyle over the ergodic dynamical system $(\Sigma\times M, \cF\otimes\cB(M),\bbQ_\nu,(\Theta_n)_{n\in\bbNz})$ and $\bbE^\bbQ_\nu\left[ \log\norm{A}\right] = \bbE^\bbQ_\nu\left[ \log\norm{\Phi_0}\right] < \infty$
    (as shown in the proof of Theorem \ref{furstkest}),
    we can apply \cite[Theorem 3.4.1 (A)]{arnold2013random}. The result follows from noting that $\lambda_1=\Lambda_1$ and $\lambda_2=\Lambda_2-\Lambda_1$.
\end{proof}

\section{The CLE and synchronisation}

For the following results, we let  $\bbP^\bbQ_\nu$ be the measure on $\Sigma$ such that for all $x\in M$, $\bbQ_x = \bbP^\bbQ_\nu\otimes \delta_x$. In order to prove Theorem \ref{syncthm}, we need the following lemma. 

\begin{lemma}\label{synclem}
    Let $(\theta,\varphi)$ be the Brusselator RDS. Then, $(\theta,\varphi)$ is two point contractible on $M$ in the sense of \cite[Definition 3.1.1]{NEWMAN_2018}. That is, for all $x,y\in M$ and $\varepsilon>0$,
    \[ \bbP^\bbQ_\nu(\{\omega \in \Sigma \ | \ \exists n\in\bbNz \text{ such that } \norm{\varphi_n(\omega,x)-\varphi_n(\omega,y)}<\varepsilon\}) > 0. \]
\end{lemma}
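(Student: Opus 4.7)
The plan is to build, for any $x, y \in M$ and $\varepsilon > 0$, an open cylinder in $\Sigma$ of positive $\bbP$-measure on which both paths remain in $M$ and are driven within $\varepsilon$ of each other after at most two time steps, and then transfer this positivity to $\bbP^\bbQ_\nu$ using the Radon--Nikodym description of the Q-process in Theorem \ref{Qprocthm}. The case $x=y$ is trivial (take $n=0$), so assume $x \neq y$.

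\emph{Step 1 (linear structure).} By \eqref{eq:fbruss}--\eqref{eq:Fbrus}, the joint one-step map $\omega_0 \mapsto (f_{\omega_0}(x), f_{\omega_0}(y))$ is affine from $\bbR^4$ to $\bbR^4$ with linear part
\[
M(x,y) := \begin{pmatrix} F(x) \\ F(y) \end{pmatrix}.
\]
After the row permutation $(1,3,2,4)$ this matrix is block upper-triangular, and expansion yields
\[
\det M(x,y) \;=\; \pm\, \tfrac{\tau^2}{\Omega^2}\, \sqrt{ab\, x_1 y_1}\, (\sqrt{x_1}-\sqrt{y_1})(\sqrt{x_1 x_2}-\sqrt{y_1 y_2}).
\]
Hence $M(x,y)$ is invertible on the open dense subset $M_* := \{(x,y) \in M\times M : x_1 \neq y_1,\ x_1 x_2 \neq y_1 y_2\}$.

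\emph{Step 2 (one-step contraction).} Take first $(x,y) \in M_*$ and fix $z$ in the interior of $M$. Invertibility of $M(x,y)$ gives a unique $\omega_0^* \in \bbR^4$ with $f_{\omega_0^*}(x) = f_{\omega_0^*}(y) = z$. By joint continuity of $(w, u) \mapsto f_w(u)$ there exists $\delta>0$ such that every $\omega_0 \in B(\omega_0^*, \delta)$ satisfies $f_{\omega_0}(x), f_{\omega_0}(y) \in M$ and $\|f_{\omega_0}(x) - f_{\omega_0}(y)\| < \varepsilon$. The cylinder $B := \{\omega \in \Sigma : \omega_0 \in B(\omega_0^*, \delta)\}$ has $\bbP(B) = \eta(B(\omega_0^*, \delta)) > 0$, lies in the target event $\mathcal{E} := \{\omega : \exists n,\ \|\varphi_n(\omega,x) - \varphi_n(\omega,y)\| < \varepsilon\}$, and satisfies $\tn(\cdot, x) > 1$ throughout. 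For a degenerate pair $(x,y) \in (M\times M)\setminus M_*$, an explicit inspection of the first coordinate of $f_{\omega_0}(x) - f_{\omega_0}(y)$ as a linear function of $\omega_{0,4}$ (using $x \neq y$) shows that only a Lebesgue-null set of $\omega_0$ returns a pair to the degeneracy locus; combined with strict positivity of the transition kernel on $M$ from Proposition \ref{propPprop}, this produces positive $\bbP$-probability of landing in $M_* \cap (M\times M)$ at time $1$, and the preceding argument then applies at time $m=2$.

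\emph{Step 3 (passing to $\bbP^\bbQ_\nu$).} The previous step produces $m \in \{1,2\}$ and a set $B \in \cF_m$ with $\bbP(B)>0$, $B \subseteq \mathcal{E}$, and $\tn(\omega, x) > m$ for every $\omega \in B$. By Theorem \ref{Qprocthm}, the restriction of the $\Sigma$-marginal of $\bbQ_x$ to $\cF_m$ has density $\omega \mapsto \tfrac{e^{\gamma m}}{f_0(x)}\, f_0(\varphi_m(\omega, x))$ with respect to $\bbP$ on $\{\tn(\cdot, x) > m\}$. Since $f_0$ is strictly positive on the compact set $M$ by Theorem \ref{quasstatthm}, this density is bounded below by a positive constant on $B$, so $\bbP^\bbQ_\nu(\mathcal{E}) \geq \bbP^\bbQ_\nu(B) > 0$, which is the claim.

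\emph{Main obstacle.} The delicate point is the degenerate case in Step 2: there the pushforward of $\eta$ under the joint one-step map is concentrated on a proper affine subspace of $\bbR^4$, and one must verify that this subspace meets $M_* \cap (M\times M)$ in a set of positive measure while also keeping both endpoints inside $M$. The explicit form of $F(x)$ makes this a straightforward but case-by-case calculation (separating $x_1 = y_1$ and $x_1 x_2 = y_1 y_2$), and is the only place where any real work beyond soft topological arguments is required.
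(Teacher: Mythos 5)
Your proposal is correct and uses the same two essential ingredients as the paper --- the full support of the nondegenerate Gaussian one-step noise, and the transfer of positivity from $\bbP$ to $\bbP^\bbQ_\nu$ via the density $\tfrac{e^{\gamma m}}{f_0(x)} f_0\circ\varphi_m$ together with $\min_M f_0>0$ (your Step 3 is essentially verbatim the paper's computation). Where you differ is the steering step. The paper never looks at the joint map into $\bbR^4$: it considers only the \emph{difference} $\varphi_1(\omega,x)-\varphi_1(\omega,y)=F_0(x)-F_0(y)+(F(x)-F(y))\omega_0$ and takes the cylinder over the open set $g^{-1}B(0,\varepsilon)$ where $g(t)=\norm{F_0(x)-F_0(y)+(F(x)-F(y))t}$, so it only needs this preimage to be a nonempty open set rather than the map to be a bijection. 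Your stronger requirement --- steering the pair exactly to $(z,z)$ --- is what forces the invertibility of $M(x,y)$ (your determinant $\pm\tfrac{\tau^2}{\Omega^2}\sqrt{ab\,x_1y_1}(\sqrt{x_1}-\sqrt{y_1})(\sqrt{x_1x_2}-\sqrt{y_1y_2})$ is right) and hence the two-step case analysis on the degeneracy locus, which you only sketch; verifying that the quadratic $u_1u_2-v_1v_2$ is not identically zero in $\omega_0$, and assembling the time-$2$ event via the Markov property with a non-uniform $\delta(u,v)$, is real residual work. That entire branch is avoidable: when $x_1=y_1$ and $x\neq y$ the drift difference $F_0(x)-F_0(y)=x_1^2(x_2-y_2)\tau\,(1,-1)^\intercal$ lies in the range of $F(x)-F(y)$, which is the line spanned by $(1,-1)^\intercal$, so the difference can still be driven below $\varepsilon$ in a single step; this observation (which, to be fair, the paper also omits when it tacitly assumes $g^{-1}B(0,\varepsilon)\neq\emptyset$) closes the degenerate case with no second step. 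So: correct, but your route buys nothing over the difference-map argument and costs you the one genuinely delicate passage in your write-up.
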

\begin{proof}
    Let $x,y\in M$ and $\varepsilon>0$. Consider the map $g:\bbR^4\to\bbR$ such that,
    \[ g(t) = \norm{F_0(x)-F_0(y) + (F(x)-F(y))t} \]
     and define $U\in\cF$ to be the following cylinder set,
    \[ U := \left[ g^{-1}B(0,\varepsilon) \right]. \]
    In other words, $U$ is the cylinder set of sequences $(\omega_n)_{n\in\bbNz}\in\Sigma$ whose first entry $\omega_0$ is contained in the preimage under $g$ of the open ball of radius $\varepsilon$, centered at the origin. By continuity of $g$, the set $g^{-1}B(0,\varepsilon)\subset \bbR^4$ is open. Hence there exists a $z\in g^{-1}B(0,\varepsilon)$ and $\delta>0$ such that, $B(y,\delta)\subset A$ where $B(z,\delta)$ denotes the open ball of radius $\delta$ centered at $z$. Choose $m\in\bbN$. Since $f_0>0$, then there exists a $K>0$ such that $\frac{e^{\gamma m}}{f_0}f_0\circ\varphi_m\geq K$ and so,
    \begin{align*}
        \bbP^\bbQ_\nu(U) &= \int_M \bbQ_{x'}(U\times M) \ d\nu(x') = \int_M \frac{e^{\gamma m}}{f_0(x')}\int_{U\times M} f_0\circ\varphi_m \ d\bbP_{x'} d\nu(x') \\
        &= \int_M \frac{e^{\gamma m}}{f_0(x')}\int_{U} (f_0\circ\varphi_m)(\omega,x') \ d\bbP(\omega) d\nu(x') 
        \geq  \int_M\int_{g^{-1}B(0,\varepsilon)} K \ d\eta(\omega_0)d\nu(x') \\
        &\geq \frac{K}{4\pi^2}\int_{B(z,\delta)} e^{-\frac{1}{2}\omega_0^\intercal \omega_0} \ d\ell^4(\omega_0) > 0
    \end{align*}
    
    The result follows by noting that,
    \begin{gather*}
        \bbP^\bbQ_\nu(\{\omega \in \Sigma \ | \ \exists n\in\bbNz \text{ such that } \norm{\varphi_n(\omega,x)-\varphi_n(\omega,y)}<\varepsilon\}) \geq \bbP^\bbQ_\nu(\{\omega \in \Sigma \ | \ \norm{\varphi_1(\omega,x)-\varphi_1(\omega,y)}<\varepsilon\}) \\
        = \bbP^\bbQ_\nu(\{\omega \in \Sigma \ | \ \norm{F_0(x)-F_0(y) + (F(x)-F(y))\omega_0}<\varepsilon\}) 
        =  \bbP^\bbQ_\nu(\{\omega \in \Sigma \ | \ \omega_0\in g^{-1}B(0,\varepsilon)\}) = \bbP^\bbQ_\nu(U) > 0.
    \end{gather*}
\end{proof}

Using Lemma \ref{synclem}, we are able to pass the synchronisation of neighbouring paths, implied by negativity of the top exponent, to synchronisation of almost all paths across $M$. 

\begin{proof}[Proof of Theorem \ref{syncthm}]
    From \cite[Remark 3.2.2]{NEWMAN_2018}, Lemma \ref{synclem} and Theorem \ref{Qprocthm} (iv) imply that $M=\supp \nu$ is the unique minimal set. By \cite[Remark 2.2.12]{NEWMAN_2018}, $\lambda_1<0$ implies $\bbQ_\nu$-almost every point in $M$ is almost surely stable in the sense of \cite[Definition 2.2.6]{NEWMAN_2018} and hence admits stable trajectories by \cite[Lemma 2.2.13]{NEWMAN_2018}. Hence the results follow from \cite[Theorem 3.2.1 and Remark 2.3.2]{NEWMAN_2018}
\end{proof}

\section{The LNA and stochastic flows}

Having proven all we stated for the CLE, we move on to the LNA. In order to characterise the process $\{X^\LNA(t) \ | \ t\geq 0\}$, we seek the work of Kunita \cite{Kunita_1990}; a systematic treatment of the theory of stochastic differential equations, stochastic flow of diffeomorphisms and their properties.

\begin{proof}[Proof of Theorem \ref{stochdiffthm}]
    Let $\{z(t) \ | \ t\geq 0, \ z(0) = z_0\}$ be a solution of the RRE \eqref{eq:RRE} for some $z_0\in\bbR^2$. Since $X^\LNA(t) = z(t) + \Omega^{-1/2}\xi(t)$ for each $t\geq 0$, then $X^\LNA$ solves,
    \begin{align*}
        dX^\LNA(t) &= dz(t) + \Omega^{-1/2}d\xi(t) \\
        &= \left(\sum_{j=1}^r \alpt_j(z(t))\nu_j + \Omega^{-1/2}\sum_{j=1}^r \nu_j \mathrm{D}_z \alpt_j(z(t))\xi(t) \right) dt + \Omega^{-1/2}\sum_{j=1}^r \nu_j \sqrt{\alpt_j(z(t))} \ dW^j_t.
    \end{align*}
    The second equality follows from substitution of \eqref{eq:RRE} and \eqref{eq:xiSDE}. Noting that $\Omega^{-1/2}\xi(t) = X^\LNA(t)-z(t)$,
    \begin{multline}\label{eq:stochflowSDE}
         dX^\LNA(t) =  \left( \sum_{j=1}^r\nu_j \mathrm{D}_z \alpt_j(z(t))X^\LNA(t) + \sum_{j=1}^r\left(\alpt_j(z(t))\nu_j - \nu_j \mathrm{D}_z \alpt_j(z(t))z(t)\right)\right)dt \\
        +  \Omega^{-1/2}\sum_{j=1}^r \nu_j \sqrt{\alpt_j(z(t))} \ dW^j_t.
    \end{multline}
    Hence the process $\{X^\LNA(t) \ | \ t\geq 0\}$ solves the continuous non-autonomous linear SDE \eqref{eq:stochflowSDE} and so, by Kunita \cite{Kunita_1990}, generates $\{\Psi_{s,t}^{z_0} (\omega^*) \ | \ \omega^*\in\Sigma^*, \ s,t\geq 0\}$, a two-parameter stochastic flow of diffeomorphisms on $\bbR^d$. 
    
    Let $x\in \bbR^d$ and $\omega^*\in\Sigma^*$. The explicit form of $\Psi_{s,t}^{z_0} (\omega^*)x$ given by \eqref{eq:LNAflow} can be obtained by solving \eqref{eq:stochflowSDE}. 
\end{proof}

\section{The LNA and finite time stability}

In order to derive an explicit form for the LNA's maximal finite time Lyapunov exponent, we follow a standard argument used to define finite time Lyapunov exponents for dynamical systems and other flows.

\begin{proof}[Proof of Theorem \ref{mftlethm}]
    Let $x\in \bbR^d, \ \omega^*\in\Sigma^*, z_0\in\bbR^d $ and $T>0$. Let $\{z(t) \ | \ t\geq 0, \ z(0) = z_0\}$ be a solution of the RRE \eqref{eq:RRE} for some $z_0\in\bbR^2$. We will study the evolution of $x$ and an arbitrary point $y$ infinitesimally close to $x$. That is, $y=x+\delta x(0)$ where $\delta x(0)$ is infinitesimal and, for now, arbitrarily oriented. Under the LNA with RRE solution $\{z(t) \ | \ t\geq 0, \ z(0) = z_0\}$, this pertubation at time $T$ is given by,
     \[ \delta x(T) = \Psi_{0,T}^{z_0} (\omega^*)y-\Psi_{0,T}^{z_0} (\omega^*)x = \mathrm{D}_x \Psi_{0,T}^{z_0} (\omega^*)x \ \delta x(0) + o(\norm{\delta x(0)}^2) = C(0,T) \delta x(0) + o(\norm{\delta x(0)}^2) \]
     where the second equality follows from taking a Taylor expansion of $x\mapsto\Psi_{0,T}^{z_0} (\omega^*)x$ about the point $y$. From herein we assume that the term $o(\norm{\delta x(0)}^2)$ is negligible since $\delta x(0)$ is small. Now,
     \[ \norm{\delta x(T)} = \sqrt{\left(C(0,T) \delta x(0)\right)^\intercal C(0,T) \delta x(0)} = \sqrt{{\delta x(0)}^\intercal {C(0,T)}^\intercal C(0,T) \delta x(0)} = \sqrt{ {\delta x(0)}^\intercal \Delta \delta x(0)},  \]
     where we define $\Delta=\Delta(0,T):=C(0,T)^\intercal C(0,T)$. Recall that the dot product of two vectors is maximised when the two vectors are parallel. Therefore $\norm{\delta x(T)}$ attains its maximum when $\delta x(0)$ is parallel to the eigenvector $\overline{\delta x}(0)$ associated to the maximum eigenvalue of $\Delta$, $\lambda_{\text{max}}(\Delta)$. That is,
     \[ \max_{\delta x(0)}\norm{\delta x(T)} = \sqrt{ {\overline{\delta x}(0)}^\intercal \Delta \overline{\delta x}(0)} = \sqrt{\lambda_{\text{max}}(\Delta)}\norm{\overline{\delta x}(0)} = \norm{C(0,T)}\norm{\overline{\delta x}(0)}. \]
     The last equality holds since the matrix norm of $C(0,T)$ is induced by the Euclidean norm on $\bbR^d$. Consequently if,
     \[ \lambda^{z_0}_T = \lambda^{z_0}_T(\omega^*,x) = \frac{1}{T}\log\norm{C(0,T)}, \]
     then,
     \[ \max_{\delta x(0)}\norm{\delta x(T)} = e^{\lambda^{z_0}_T T}\norm{\overline{\delta x}(0)} \]
     and so $\lambda^{z_0}_T$ denotes the maximal average exponential rate of separation of infinitesimally close paths over the time interval $[0,T]$. Note that this rate is indeed independent of $x$ and $\omega^*$.
\end{proof}

\section{Discussion}

We have demonstrated how the approximate processes  $\{X^\CLE_n \ | \ n\in\bbNz\}$ and $\{X^\LNA(t) \ | \ t\geq 0\}$ can be described in such a way as to explore the dynamical properties of stochastic reaction network models. In doing so, we've proved interesting results about an oscillatory network whose deterministic process exhibits a Hopf bifurcation, which unlike other studies, has noise driven by the chemical physics of stochastic reaction networks. An interesting study could be compare its finite time dynamics to the deterministic Brusselator with random bifurcation parameter as in \cite{engel2023noiseinduced}. That is, fast and slow regimes that occur when one reaction rate is much larger then the other. We have also shown evidence of a limitation of models whose accuracy depends on repeated use of the LNA; finite time stability is lost in LNA predictions made in short time intervals.

\bibliographystyle{plain}
\bibliography{main} 
\end{document}